\DeclareMathOperator{\Hom}{Hom}
\DeclareMathOperator{\Compos}{Compos}
\DeclareMathOperator{\Reg}{Reg}
\DeclareMathOperator{\Ind}{Ind}
\DeclareMathOperator{\Res}{Res}
\DeclareMathOperator{\End}{End}
\DeclareMathOperator{\Cl}{Cl}
\DeclareMathOperator{\incl}{incl}
\DeclareMathOperator{\id}{id}
\DeclareMathOperator{\alg}{-alg}
\definecolor{darkWhite}{rgb}{0.94,0.94,0.94}
\newcommand{\aurel}[1]{{\color{green}[Aurel: {#1}]}}
\newcounter{introthm}
\newtheorem{introtheorem}[introthm]{Theorem}
\newtheorem{tm}{Theorem}[section]
\newtheorem{pro}[tm]{Proposition}
\newtheorem{lm}[tm]{Lemma}
\newtheorem{cor}[tm]{Corollary}
\theoremstyle{definition}
\newtheorem{df}[tm]{Definition}
\newtheorem{req}[tm]{Remark}
\newtheorem{ex}[tm]{Example}
\newtheorem{algo}[tm]{Algorithm}
\newtheorem{ft}[tm]{Fact}
\newcommand{\N}{\mathbb{N}}
\newcommand{\Q}{\mathbb{Q}}
\newcommand{\Z}{\mathbb{Z}}
\newcommand{\C}{\mathbb{C}}
\newcommand{\CC}{\mathcal{C}}
\newcommand{\F}{\mathbb{F}}
\renewcommand{\O}{\mathcal{O}}
\newcommand{\HH}{\mathcal{H}}
\newcommand{\J}{\mathcal{J}}
\newcommand{\G}{\mathcal{G}}
\newcommand{\Y}{\mathcal{Y}}
\newcommand{\NN}{\mathcal{N}}
\date{}
\author{Fabrice Etienne}
\title{Computing class groups by induction with generalised norm relations}
\begin{document}

\maketitle
\abstract{We introduce a generalisation of norm relations in the group algebra $\Q[G]$, where $G$ is a finite group. We give some properties of these relations, and use them to obtain relations between the $S$-unit groups of different subfields of the same Galois extension of $\Q$, of Galois group $G$. Then we deduce an algorithm to compute the class groups of some number fields by reducing the problem to fields of lower degree. We compute the class groups of some large number fields.}

\section*{Introduction}

The problem of computing the class group of number fields is a central problem in number theory, with applications for example in cryptography or in class field theory.
Let $K$ be a number field. The most commonly used method to compute the class group or the $S$-units groups of $K$ 
is Buchmann's algorithm \cite{Buchman}. Its complexity grows quickly with the degree~$n$: if we denote by $D(K)$ the discriminant of $K$, the time complexity of this algorithm for fixed~$n$ is in $\O( e^{a \sqrt{ \ln|D(K)|\ln \ln|D(K)|}} )$ where $a$ is a constant,
and the implicit constant of the $\O$ depends on $n$ exponentially; note in addition that the discriminant grows at least exponentially with~$n$. So it would be beneficial to have an inductive method to reduce these problems to similar ones in number fields of smaller degree and discriminant.

Such inductive methods have already been proposed. Suppose $K/F$ is a Galois extension of number fields, of Galois group $G$.
By studying relations between the subgroups of $G$ arising from character theory, we can find corresponding relations between the arithmetic invariants of the intermediate fields. For a subgroup $H< G$, denote by $\Ind_{G/H}(1_H)$ the permutation character of $G$ induced from the trivial representation of~$H$. A Brauer relation \cite{Brauer} is a relation of the form $$\sum_{H<G} a_H \Ind_{G/H}(1_H) = 0$$ with $a_H \in \Z$. Brauer proved that when such a relation exists, there is a corresponding relation between arithmetic invariants of the fields~$K^H$.

In~\cite{norm}, Biasse, Fieker, Hofmann and Page studied another type of relation called norm relation. Given a subgroup $H < G$, we define its norm element to be the formal sum $N_H = \sum_{h \in H}h$ in $\Z[G]$. If $R$ is a commutative ring, $\HH$ a set of subgroups of $G$, a \emph{norm relation} over $R$ with respect to $\HH$ is an equality in $R[G]$ of the form $$1 = \sum_{i = 1}^\ell a_i N_{H_i} b_i$$ with $a_i, b_i \in R[G]$ and $H_i<G$. In their paper, they derive from such a relation an inductive algorithm to compute the class group or the groups of $S$-units of $K$ by reducing the problem to a similar problem on the subfields $K^H$.

Our goal is to generalize the notion of norm relation in order to be able to use the same kind of method to compute the class group of some number fields $K$ even without a Galois extension $K/F$. If $K$ is a number field, let us denote by $\Tilde{K}$ its Galois closure, and let $G$ be its Galois group (i.e. the Galois group of the extension $\Tilde{K}/\Q$). Let $H$ be the subgroup of $G$ such that $K = \Tilde{K}^H$. Let $R$ be a commutative ring, and $J_1, \cdots, J_l$ some subgroups of $G$. A \emph{generalised norm relation} over $R$ with respect to $H$ and the $J_i$ is an equality in $R[G]$ of the form $$ N_H = \sum_{i = 1}^\ell a_i N_{J_i} b_i $$ with $a_i, b_i \in R[G]$. 
Classical norm relation are a special case of generalised norm relation where $H$ is the trivial group.
Given $K$ and subfields~$K_i$ of $\Tilde{K}$, we say that $K$ admits a generalised norm relation with respect to the $K_i$ if~$G$ admits one over $\Q$ with respect to $H$ and $\{J_i\}$, where the $J_i$ are such that $K_i = \Tilde{K}^{J_i}$ for all~$i$.
We impose the condition that the $K_i$ are subfields of the Galois closure~$\Tilde{K}$, but we prove in theorem \ref{tm:galoisclosure} this causes no loss in generality. 
The main result of this article is the following:

\begin{introtheorem} \label{tmA}
There exists a polynomial time algorithm that, on input
\begin{itemize}
    \item a number field~$K$,
    \item a set $S$ of prime numbers,
    \item subfields $K_i$ of the Galois closure $\Tilde{K}$,
    \item for each~$i$, a basis of the $S$-unit group of $K_i$,
\end{itemize}
if $K$ admits a generalised norm relation with respect to the $K_i$, outputs a basis of the $S$-unit group of $K$.
\end{introtheorem}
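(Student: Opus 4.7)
The plan proceeds in three phases: find a relation with integer coefficients, use it to generate a finite-index subgroup of $\O_{K,S}^\times$ from the given bases, then saturate to a full basis.

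\textbf{Phase 1 (integer-coefficient relation).} Since $\Q[G]$ is finite-dimensional, the coefficients $\alpha_i, \beta_i \in \Q[G]$ of the hypothesised relation can be found by linear algebra in the group algebra (the existence of such a relation with prescribed $J_i$ reduces to testing whether $N_H$ lies in a certain $\Q$-linear subspace of $\Q[G]$). Clearing denominators yields
\[
d \cdot N_H \;=\; \sum_{i=1}^\ell \tilde\alpha_i\, N_{J_i}\, \tilde\beta_i \quad\text{in } \Z[G]
\]
for some $d \in \Z_{>0}$ and $\tilde\alpha_i, \tilde\beta_i \in \Z[G]$.

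\textbf{Phase 2 (finite-index subgroup).} Set $M := \O_{\tilde K, S}^\times$, viewed multiplicatively as a $\Z[G]$-module, so that $M^H = \O_{K,S}^\times$ and $M^{J_i} = \O_{K_i,S}^\times$. Pre-multiplying the relation by $N_H$ and using $N_H^2 = |H|\, N_H$ in $\Z[G]$ gives
\[
d|H| \cdot N_H \;=\; \sum_i (N_H \tilde\alpha_i)\, N_{J_i}\, \tilde\beta_i \quad\text{in } \Z[G].
\]
Evaluating at any $y \in M$ and setting $w_i := N_{J_i}(\tilde\beta_i \cdot y) \in M^{J_i} = \O_{K_i,S}^\times$ yields
\[
N_H(y)^{d|H|} \;=\; \prod_{i=1}^\ell N_H(\tilde\alpha_i \cdot w_i).
\]
Let $U \leq \O_{K,S}^\times$ be the subgroup generated by all elements $N_H(\tilde\alpha_i \cdot v)$ as $v$ ranges over the given basis of $\O_{K_i,S}^\times$ and $i$ over $\{1,\ldots,\ell\}$; each generator is $H$-invariant by construction, hence does lie in $\O_{K,S}^\times$. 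The displayed identity shows that $U$ contains $\{z^{d|H|} : z \in N_H(M)\}$, and since $N_H$ acts on $\O_{K,S}^\times$ as the $|H|$-th power map, $N_H(M) \supseteq (\O_{K,S}^\times)^{|H|}$ has finite index in $\O_{K,S}^\times$; hence so does $U$.

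\textbf{Phase 3 (explicit realisation and saturation).} The main obstacle is computing each generator $N_H(\tilde\alpha_i \cdot v) \in K$ without explicit arithmetic in $\tilde K$, whose degree $|G|$ may be exponential in $[K:\Q]$. Writing $\tilde\alpha_i = \sum_g c_g g$, the Galois conjugate $g(v)$ lies in the conjugate subfield $g(K_i) \cong K_i$ and depends only on the coset $g J_i$, so the formal expansion of $N_H(\tilde\alpha_i \cdot v)$ involves only $|G|/|J_i|$ distinct factors, each computable within a small-degree subfield. Since the product lies in the concrete field $K$, one evaluates it numerically at every complex embedding of $K$ and every $\mathfrak p$-adic completion for $\mathfrak p \in S$ by assembling embeddings of the $K_i$, and then recovers the exact element of $K$ by an LLL-based rational reconstruction at sufficient precision. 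Once the generators of $U$ are realised as concrete elements of $K$, a standard $p$-saturation at every prime $p$ dividing $d|H|$ (which bounds the prime support of the index $[\O_{K,S}^\times : U]$) upgrades $U$ to a full $\Z$-basis of $\O_{K,S}^\times$. The hard part is this final phase: controlling the combinatorial size of the Galois products and the reconstruction precision so that the entire procedure stays polynomial-time in the bit-size of the input.
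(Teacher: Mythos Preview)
Your overall architecture (produce a finite-index subgroup from the subfield $S$-units, then saturate) matches the paper, but there is a genuine gap that prevents your argument from being polynomial-time.

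\textbf{The main gap.} In Phase~1 you propose to find the coefficients $\alpha_i,\beta_i$ by linear algebra in $\Q[G]$. The problem is that $|G|$ is not bounded polynomially in the input: for $n=[K:\Q]$ one only knows $|G|\le n!$, and the paper stresses that neither $G$ nor $\tilde K$ is assumed to be computable in polynomial time. So writing down a single element of $\Z[G]$, let alone solving a linear system there, is already exponential in~$n$. You notice this obstruction in Phase~3 for arithmetic in $\tilde K$, but it has already bitten you in Phase~1. The paper's whole Section~1 is devoted to circumventing this: the bijection $J_i\backslash G/H \leftrightarrow \Compos(K,K_i)$ and the formula $\mathcal C\cdot x = N_{C/K}(\iota_{K_i}(x))$ allow one to realise the Hecke operators $T_{HgJ_i}$ as norms from explicit compositums of $K$ and $K_i$, which are fields of degree at most $n\cdot[K_i:\Q]$. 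One never manipulates elements of $\Z[G]$; one enumerates compositums (there are at most $[K_i:\Q]$ of them, found by factoring the defining polynomial of $K_i$ over $K$) and computes relative norms. This replaces your Phase~1 and the reconstruction step of your Phase~3 simultaneously.

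\textbf{A secondary gap.} In Phase~3 you saturate at the primes dividing $d|H|$, but you give no polynomial bound on $d$: clearing denominators in an arbitrary rational solution gives no control on its size. The paper handles this by introducing the \emph{optimal coefficient} $c(\J,H)$ and proving the uniform bound $c(\J,H)\mid |G|^2\mid (n!)^2$ (Theorem~2.14); hence the primes at which saturation is needed are exactly the primes $\le n$, and the total number of saturation steps is $O(n\log n)$, independent of any particular relation found. Without such an a priori bound your saturation step is not polynomial-time.
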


We will describe such an algorithm (algorithm \ref{algo:polynomial}), as well as another algorithm that is not provably polynomial time, but is often faster than the first one in practice (algorithm \ref{algo:faster})
Using an implementation of such an algorithm in Pari/GP, we compute the class group of some number fields significantly faster than with other methods, including the method using classical norm relations from~\cite{norm}. In particular, in example \ref{ex:C7A5}, we compute the class group of a field of degree $105$ and discriminant $2^{126} \cdot  29^{90} \cdot  67^{42} \simeq 1.7 \cdot 10^{246}$ in about $5$ days (CPU time), whereas without our method, we could not compute it in over $5$ months.

One problem we encounter is that there is no known polynomial time algorithm that, given a number field $K$ defined by an irreducible polynomial over $\Q$, computes its its Galois group $G$. We will provide a way to determine whether relations exists without actually having to compute $G$ or $\Tilde{K}$. 
In order to do that, we will need some properties of Hecke algebras \cite{Yoshida} and compositums. We will prove that a compositum $\CC$ of two number field $K$ and $L$ naturally induces a morphism 
from $K^\times$ to $L^\times$.
This map will be denoted $x \mapsto \CC \cdot x$, and is described by the following theorem (see theorem \ref{tm:action_compo}).

\begin{introtheorem} \label{tmB}
    Let $K, L$ be two number fields, let $x$ be an element of $K^\times$ and $\CC = (C, \iota_K, \iota_L)$ a compositum of $K$ and $L$. Then $\CC \cdot x = N_{C/L}(\iota_K(x))$, where $N_{C/L}$ is the norm of the extension $C/L$.
\end{introtheorem}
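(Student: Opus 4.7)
The plan is to work inside a common Galois closure and reduce the claim to a direct translation between the abstract action defining $\CC \cdot x$ and the Galois-theoretic description of the norm.

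First, I would fix a number field $M$ which is Galois over $\Q$ and contains the compositum $C$ (for instance the Galois closure of $C/\Q$), and set $G = \Gal(M/\Q)$. Choose subgroups $H_K, H_L \leq G$ with $K = M^{H_K}$ and $L = M^{H_L}$. Using the classical correspondence between compositums of $K$ and $L$ and double cosets in $H_L \backslash G / H_K$, I would pick $g \in G$ so that $\iota_L : L \hookrightarrow C \subseteq M$ is the inclusion and $\iota_K : K \hookrightarrow C \subseteq M$ is the restriction of $y \mapsto g \cdot y$; with this choice one has $C = M^{H_L \cap g H_K g^{-1}}$.

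Next, I would rewrite $\CC \cdot x$ using its definition. The morphism $x \mapsto \CC \cdot x$ is induced by the Hecke-algebra action of the double coset $H_L g H_K$ on the invariants $(M^\times)^{H_K} = K^\times$, which concretely gives
\[
\CC \cdot x \;=\; \prod_{h} h(g \cdot x),
\]
where $h$ ranges over a system of coset representatives for $H_L / (H_L \cap g H_K g^{-1})$. The product does not depend on the representatives (because $x$ is $H_K$-invariant) and lies in $L^\times$ (because it is $H_L$-invariant).

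Then I would expand the right-hand side. By Galois theory, the embeddings of $C$ into $M$ that fix $L$ are in bijection with $H_L / \Gal(M/C) = H_L / (H_L \cap g H_K g^{-1})$; applying this to the element $\iota_K(x) = g \cdot x \in C^\times$ yields
\[
N_{C/L}(\iota_K(x)) \;=\; \prod_{h} h(g \cdot x),
\]
over the same set of representatives, which matches the formula for $\CC \cdot x$ above and concludes the argument.

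The main obstacle will be the bookkeeping of the first step: one must verify carefully that the parametrisation of compositums by double cosets identifies $\CC$ with a specific double coset in a way that is compatible with the pair of embeddings $(\iota_K, \iota_L)$ used to define the action, and that the resulting formula is independent of the auxiliary choices of $M$ and $g$. Once this translation is in place, the equality is merely a matching of two descriptions of the same product of Galois conjugates.
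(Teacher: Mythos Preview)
Your proposal is correct and follows essentially the same approach as the paper: identify the compositum with a double coset $H_L g H_K$ via the standard bijection, show that $C$ is the fixed field of $H_L \cap gH_K g^{-1}$, and then verify that both $\CC\cdot x$ and $N_{C/L}(\iota_K(x))$ equal the product $\prod_h h(g\cdot x)$ over representatives $h$ of $H_L/(H_L\cap gH_Kg^{-1})$. The only cosmetic difference is that the paper carries out the change of variables $\delta = hg$ explicitly rather than quoting the coset bijection directly, and it works in the Galois closure $\tilde{K}$ rather than an arbitrary Galois $M$ containing $C$.
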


We will sometimes refer to this function as the action of the compositum $\CC$ on $K^\times$. A similar result also gives a morphism of additive groups $K\to L$ given by the action of the compositum $\CC$.
We will prove the following characterisation (see theorem \ref{tm:criteria_compositum}).


\begin{introtheorem} \label{tmC}
Let $K,L_1, \cdots , L_\ell$ be number fields. Let $\alpha$ and $\beta_1, \cdots, \beta_l$ be such that $K = \Q(\alpha)$ and $L_i = \Q(\beta_i)$ for all $i$. Then $K$ admits a generalised norm relation with respect to $L_1, \cdots , L_\ell$, if and only there is a relation of the form $$\alpha = \sum_{i = 1}^{\ell} \sum_{C \in Compo(K, L_i)} a_{i, C} C \cdot \beta_i$$
where the coefficients $a_{i,C}$ are in $\Q$.
\end{introtheorem}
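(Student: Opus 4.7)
The plan is to translate both sides of the equivalence into statements about the semisimple group algebra $\Q[G]$ and compare them. Write $G=\Gal(\tilde K/\Q)$, $H=\Gal(\tilde K/K)$ and $J_i=\Gal(\tilde K/L_i)$, and recall that $\Compos(K,L_i)$ is in bijection with $H\backslash G/J_i$. For a double coset $HgJ_i$ with associated compositum $C \cong \tilde K^{H\cap gJ_ig^{-1}}$, Theorem~\ref{tmB} (in its additive form, which identifies $C \cdot \beta_i$ with $\Tr_{C/K}(\iota_{L_i}(\beta_i))$) combined with the combinatorial identity
\[
\sum_{h \in H} hg(\beta_i) = |H\cap gJ_ig^{-1}|\cdot \Tr_{C/K}(g\beta_i)
\]
shows that $C\cdot\beta_i$ equals, up to a nonzero rational scalar depending only on the double coset, the element $N_H\,g\,N_{J_i}\cdot\beta_i$ computed in $\tilde K$ using the natural $\Q[G]$-action. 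Summing over double cosets then identifies $\sum_C\Q\cdot(C\cdot\beta_i)$ with $N_H\,\Q[G]\,N_{J_i}\cdot\beta_i\subseteq K$, so that an identity $\alpha=\sum_{i,C}a_{i,C}\,C\cdot\beta_i$ is equivalent to an identity $\alpha=\sum_i w_i\cdot\beta_i$ in $\tilde K$ with $w_i\in N_H\,\Q[G]\,N_{J_i}$.

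Next I would fix a normal basis generator $\theta$ of $\tilde K/\Q$, producing a $\Q[G]$-module isomorphism $\tilde K\cong\Q[G]$ which sends $\alpha$ and $\beta_i$ to elements $y_\alpha\in e_H\Q[G]$ and $y_{\beta_i}\in e_{J_i}\Q[G]$ whose $G$-stabilisers are exactly $H$ and $J_i$ by primitivity. For sufficiently generic primitive elements (for instance $\alpha=\Tr_{\tilde K/K}(\theta)$ and $\beta_i=\Tr_{\tilde K/L_i}(\theta)$, which the forward direction allows us to reduce to) one moreover has $\Q[G]\cdot y_\alpha\cong\Q[G/H]$ and $\Q[G]\cdot y_{\beta_i}\cong\Q[G/J_i]$. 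Under this translation, the field identity of the theorem becomes $y_\alpha\in\sum_i\Q[G]\cdot y_{\beta_i}$ in $\Q[G]$, while the existence of a generalised norm relation becomes $N_H\in I:=\sum_i\Q[G]\,N_{J_i}\,\Q[G]$, a two-sided ideal of $\Q[G]$.

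The equivalence of these two conditions is then established block-by-block in the Wedderburn decomposition $\Q[G]\cong\prod_\rho A_\rho$, using the Hecke algebra formalism of~\cite{Yoshida} referenced earlier. By semisimplicity, $N_H\in I$ is equivalent to the character-theoretic criterion that every irreducible $V_\rho$ with $V_\rho^H\neq 0$ also satisfies $V_\rho^{J_i}\neq 0$ for some $i$; under the primitivity hypotheses above, this is precisely the condition that controls $\Q[G]\cdot y_\alpha\subseteq\sum_i\Q[G]\cdot y_{\beta_i}$ in each Wedderburn block. One direction (field identity $\Rightarrow$ norm relation) follows because $y_\alpha\in I$ forces $\Q[G]\cdot y_\alpha\subseteq I$, and since $\Q[G]\cdot y_\alpha\cong\Q[G/H]\cong\Q[G]\cdot e_H$ as left $\Q[G]$-modules, two-sided ideals of the semisimple $\Q[G]$ absorb whole isotypic components, giving $e_H\in I$. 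The reverse direction requires extracting explicit rational coefficients $a_{i,C}$ from the abstract identity $N_H\in I$, which is the delicate half and is best done via the Hecke algebra presentation of $N_H\,\Q[G]\,N_{J_i}$ by its double-coset basis. The main obstacle is that for a non-generic primitive $\beta_i$ the $G$-conjugates may be $\Q$-linearly dependent, so the canonical map $\Q[G/J_i]\twoheadrightarrow\Q[G]\cdot\beta_i$ has nontrivial kernel and the Wedderburn identification loses information; this is sidestepped by passing to trace-of-normal-basis representatives when constructing the coefficients, and by the observation that the existence (not the uniqueness) of a field relation is what the theorem asserts.
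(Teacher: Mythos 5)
Your proposal reads the displayed identity as an equality of actual elements of the field $K$, taking $C\cdot\beta_i=\mathrm{Tr}_{C/K}(\iota_{L_i}(\beta_i))$, and then tries to transport it into $\Q[G]$ through a normal-basis isomorphism $\tilde K\cong\Q[G]$. Under that reading the ``field relation $\Rightarrow$ norm relation'' direction is genuinely false, so the obstacle you flag in your last sentence is fatal rather than a loose end. Concretely, take $K=\Q(\sqrt2+\sqrt3)$, $L_1=\Q(\sqrt3)$, $L_2=\Q(\sqrt2)$, with $\alpha=\sqrt2+\sqrt3$, $\beta_1=\sqrt3$, $\beta_2=\sqrt2$. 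Every compositum of $K$ with $L_i$ has underlying field $K$, and the field elements $C\cdot\beta_i$ are exactly $\pm\sqrt3$ and $\pm\sqrt2$, so $\alpha=C\cdot\beta_1+C'\cdot\beta_2$ is a relation of the required form with rational coefficients; yet no generalised norm relation exists here, since the character of $G=(\Z/2)^2$ whose kernel fixes $\Q(\sqrt6)$ has nonzero $H$-invariants (here $H=1$) but zero $J_1$- and $J_2$-invariants. The relation holds ``by accident'' precisely because the evaluation maps $gH\mapsto g\alpha$ and $gJ_i\mapsto g\beta_i$ from $\Q[G/H]$ and $\Q[G/J_i]$ into $\tilde K$ are not injective — the degeneracy you name — and your proposed repair (passing to traces of a normal basis generator) is not available, because the theorem quantifies over the \emph{given} primitive elements, not over a generic choice. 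Your forward direction has the mirror-image defect: the elements you produce are Hecke translates of arbitrary elements of $L_i$, and when the conjugates of $\beta_i$ are $\Q$-linearly dependent these need not be $\Q$-linear combinations of the translates of $\beta_i$ itself.

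The paper's proof never leaves the formal permutation modules, which is why it avoids all of this: Theorem~\ref{tm:criteria_compositum} is obtained by rewriting Proposition~\ref{pro:CritHecke} (a norm relation exists iff $1H$ is a sum of Hecke operators applied to $H$-invariant vectors of $\bigoplus_i\Q[G/J_i]$) through the dictionary of Section~\ref{sec:HeckeCompo}: double cosets correspond to compositums (Proposition~\ref{pro:HeckeCompo}) and cosets to complex embeddings (Proposition~\ref{pro:quotient_embedd}). In that reading $\alpha$ stands for the distinguished basis vector $1H$ of the \emph{free} module $\Q[\Hom(K,\C)]\cong\Q[G/H]$, the quantity $C\cdot\beta_i$ is an element of that free module, and the linear combination is allowed to involve the images of \emph{all} embeddings of $L_i$, not only $\tau_{\beta_i}$; this is exactly how the recognition algorithm at the start of Section~\ref{sec:Algo} uses the criterion, doing linear algebra in $\Q[\Hom(K,\C)]$. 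To make your field-theoretic version work you would have to either add the hypothesis that the conjugates of $\alpha$ and of each $\beta_i$ are $\Q$-linearly independent, or restate the relation in $\Q[\Hom(K,\C)]$ as the paper does.
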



To prove theorem \ref{tmA}, we will also need some properties of Mackey functors \cite{Boltje}. In particular proposition \ref{pro:Mackeyfct}, will be useful to find a relation between the $S$-units of the number fields involved in a generalised norm relation. We will prove a bound (theorem \ref{tm:bornec}) that is crucial to prove that our algorithm is indeed polynomial.

We will also compare the method to compute class groups using generalised norm relation with the method described in \cite{norm} that only uses classical norm relations. We will provide an algorithm to find examples where the method using generalised norm relation is more efficient (algorithm \ref{alg:comparaison}).

With a systematic research on every group of cardinal up to 700, it appears that we can find many examples where generalised norm relations are useful.
We think it is an interesting question to classify norm relation (classical ones as well as generalised ones).
We do not have an algorithm that takes a number field and determines whether or not it admits a generalised norm relation in polynomial time, without having to compute the Galois group or the Galois closure. Finding such an algorithm is also an interesting open question.



The article is organised as follows: in section \ref{sec:HeckeCompo}, we discuss properties of Hecke algebras and compositum, then in section \ref{sec:NormRel} we recall the definition of classical norm relations, define the generalisation, and prove some properties. In section \ref{sec:Mackey}, we recall some properties of Mackey functors that will be useful later to prove the complexity of our algorithms. In section \ref{sec:Algo}, we provide algorithms to check whether given fields admit a generalised norm relation, and to compute the class group and the group of $S$-units of number fields. Then in section~\ref{sec:compar}, we compare these new methods with the methods in \cite{norm}. Finally, we give examples in section \ref{sec:Ex}.

\paragraph*{Notations and conventions}
When $R$ is a ring and $M$, $N$ are left $R$-modules, we will denote~$\Hom_R(M,N)$ the group of $R$-module homomorphisms from $M$ to $N$.
In a finite field extension $K/F$, we will denote by $N_{K/F}(x)$ the norm of $x \in K$.

\paragraph*{Acknowledgements} I would like to thank A. Page, who suggested to me to generalise the notion of norm relation, and was here to provide help and advice at every step of the conception of this article. Many thanks also to B. Allombert for his advice regarding the implementation in Pari/GP of the algorithms in section \ref{sec:Algo}, and for his help to compute the $S$-units of some large auxiliary fields.\\
This research was funded by the University of Bordeaux. It was also supported by the CIAO ANR (ANR-19-CE48-008) and the CHARM ANR (ANR-21-CE94-0003), as well as the PEPR Cryptanalyse (ANR-22-PECY-0010). It took place inside the CANARI team (Cryptographic Analysis and Arithmetic) of the Institute of Mathematics of Bordeaux (IMB).\\
Experiments presented in this paper were carried out using the PlaFRIM experimental testbed, supported by Inria, CNRS (LABRI and IMB), Université de Bordeaux, Bordeaux INP and Conseil Régional d’Aquitaine (see https://www.plafrim.fr).

\section{Hecke algebras and compositums} \label{sec:HeckeCompo}

In this section, $K$ is a number field, $G$ is the Galois group of the Galois closure $\tilde{K}$ of $K$, $H$ and $J$ are subgroups of $G$, respectively fixing the subfields $K$ and $L$.
Let $\alpha$ be an element of $\C$ 
such that $K = \Q(\alpha)$ and $f$ be the minimal polynomial of $\alpha$ in $\Q[X]$.
Let $\beta$ be such that $L = \Q(\beta)$ and let $f_L$ be its minimal polynomial.

Our goal in this section is to establish that the set of compositums of $K$ and $L$ acts on the set of fixed points by $H$ of any $G$-module, and to make this action explicit. The action will be described in theorem~\ref{tm:action_compo}.
In this section, $R$ denotes a commutative ring and $V$ an $R[G]$-module.
To establish this theorem, we will first need some useful isomorphisms involving $R[G]$-modules. First let us write two lemmas, describing some well known isomorphisms. One can find the proofs in~\cite{Yoshida}.

\begin{lm}\label{lm:fixed_points}
The map $$\Phi_1\colon \Hom_{R[G]}(R[G/H], V) \rightarrow V^H,
\phi \mapsto \phi(1 \cdot H)$$
is an isomorphism of $R$-modules, and its inverse is

$$\Phi_1^{-1}\colon V^H \rightarrow \Hom_{R[G]}(R[G/H], V), x \mapsto \left\{
    \begin{array}{l}
        \mbox{ The unique morphism } \phi \mbox{ of } \\
        R[G]\mbox{-modules in } V \mbox{ such that } \\
        \phi(1 \cdot H) = x
    \end{array}
\right. .$$
\end{lm}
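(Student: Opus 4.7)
The plan is to prove this is a standard Frobenius-type reciprocity/universal property of the induced module $R[G/H]$, the key observation being that $R[G/H]$ is generated as an $R[G]$-module by the single element $1 \cdot H$, whose stabiliser is exactly $H$.

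First I would verify that $\Phi_1$ is well-defined: for any $\phi \in \Hom_{R[G]}(R[G/H],V)$ and any $h \in H$, one has $h \cdot (1 \cdot H) = 1 \cdot H$ in $R[G/H]$, so $h \cdot \phi(1\cdot H) = \phi(h \cdot 1 \cdot H) = \phi(1 \cdot H)$, showing $\phi(1\cdot H) \in V^H$. $R$-linearity of $\Phi_1$ is immediate from the pointwise $R$-module structure on $\Hom_{R[G]}(R[G/H],V)$.

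Next I would construct the inverse. Given $x \in V^H$, I want to define $\phi_x\colon R[G/H] \to V$ by $\phi_x(g \cdot H) := g \cdot x$, extended $R$-linearly. Well-definedness on cosets follows from $x \in V^H$: if $gH = g'H$, then $g^{-1}g' \in H$, so $(g^{-1}g')\cdot x = x$, giving $g \cdot x = g' \cdot x$. The $R[G]$-linearity of $\phi_x$ is then clear from the definition. Uniqueness of such a $\phi$ with $\phi(1\cdot H) = x$ follows because the cosets $g \cdot H$ span $R[G/H]$ as an $R$-module, and any $R[G]$-linear map must send $g \cdot H = g \cdot (1 \cdot H)$ to $g \cdot \phi(1 \cdot H) = g \cdot x$; this justifies the description of $\Phi_1^{-1}$ given in the statement.

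Finally I would check the two compositions are the identity. On one side, $\Phi_1(\phi_x) = \phi_x(1\cdot H) = 1\cdot x = x$. On the other, given $\phi$, the map $\phi_{\phi(1\cdot H)}$ agrees with $\phi$ on $1 \cdot H$ and hence on all of $R[G/H]$ by the uniqueness just shown. No step here is a serious obstacle: the only point requiring a moment's care is checking that the coset assignment $g H \mapsto g \cdot x$ is well-defined, which is precisely where the hypothesis $x \in V^H$ is used.
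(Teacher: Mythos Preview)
Your proof is correct and complete; it is the standard Frobenius reciprocity argument. The paper does not actually supply its own proof of this lemma but instead refers the reader to \cite{Yoshida}, so there is nothing to compare beyond noting that your argument is exactly the expected one.
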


\begin{lm}
There is an isomorphism of $R$-modules
$$\Phi_2\colon R[H \backslash G /J] \rightarrow R[G/J]^H, \sum_{HgJ \in H \backslash G /J } \alpha_{HgJ}HgJ \mapsto \sum_{gJ \in G/J} \alpha_{HgJ} gJ.$$

Its inverse is $$\Phi_2^{-1}\colon R[G/J]^H \rightarrow R[H \backslash G /J], \sum_{gJ \in G/J} \alpha_{gJ} gJ \mapsto \sum_{HgJ \in H \backslash G /J } \alpha_{gJ} HgJ.$$

\end{lm}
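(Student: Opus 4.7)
The claim is essentially a statement about the orbit structure of the $H$-action on the coset set $G/J$, so I would reduce everything to that combinatorial fact and then just read off the two maps.

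First I would record the $H$-action on $G/J$: the group $H$ acts on $G/J$ by left multiplication, $h \cdot (gJ) = (hg)J$, and this is the action that defines the $R[G]$-module structure on $R[G/J]$ being used here. The $H$-orbit of $gJ$ is exactly $\{hgJ : h \in H\}$, i.e.\ the set of left $J$-cosets contained in the double coset $HgJ$. Hence the $H$-orbits on $G/J$ are in canonical bijection with the double cosets $H \backslash G / J$, where a double coset $HgJ$ corresponds to the orbit $\{g'J \in G/J : g'J \subseteq HgJ\}$.

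Next I would characterise the fixed points. An element $v = \sum_{gJ \in G/J} \beta_{gJ} gJ$ of $R[G/J]$ is fixed by $H$ if and only if $h \cdot v = v$ for every $h \in H$, which, after comparing coefficients of each basis element $gJ$, is equivalent to $\beta_{gJ} = \beta_{hgJ}$ for all $h \in H$. In other words, the function $gJ \mapsto \beta_{gJ}$ must be constant on $H$-orbits. By the previous paragraph this is the same as saying that $\beta_{gJ}$ depends only on the double coset $HgJ$ in which $gJ$ lies. Therefore $R[G/J]^H$ is a free $R$-module with basis indexed by $H \backslash G / J$, the basis element corresponding to $HgJ$ being $\sum_{g'J \subseteq HgJ} g'J$.

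With that description in hand, both maps are immediate. The map $\Phi_2$ sends $HgJ$ to $\sum_{g'J \subseteq HgJ} g'J$ by construction (reading the formula, the coefficient of $g'J$ in $\Phi_2(\sum \alpha_{HgJ} HgJ)$ is $\alpha_{HgJ}$ whenever $g'J \subseteq HgJ$), so $\Phi_2$ sends a basis of $R[H\backslash G/J]$ bijectively onto the basis of $R[G/J]^H$ just described, and it is therefore an $R$-module isomorphism. Conversely, given any $v \in R[G/J]^H$, the coefficient $\beta_{gJ}$ depends only on the double coset $HgJ$, so one may set $\alpha_{HgJ} := \beta_{gJ}$ (independent of the representative $gJ$), and this is exactly $\Phi_2^{-1}$ as stated; checking $\Phi_2 \circ \Phi_2^{-1} = \mathrm{id}$ and $\Phi_2^{-1} \circ \Phi_2 = \mathrm{id}$ is then a direct substitution.

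The only thing that requires a moment of care is not a genuine obstacle but a bookkeeping check: verifying that the formula for $\Phi_2^{-1}$ is well defined, since the same double coset $HgJ$ has several representatives $gJ$, and one must observe that $H$-invariance of $v$ is precisely what makes $\beta_{gJ}$ independent of the choice of representative. Once this is noted, everything else is formal.
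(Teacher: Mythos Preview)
Your proof is correct. The paper does not actually supply its own argument for this lemma; it just says the result is well known and refers the reader to \cite{Yoshida}. Your direct orbit-counting argument (identifying $H$-orbits on $G/J$ with double cosets and observing that $H$-invariance forces coefficients to be constant on orbits) is the standard way to prove this and is more self-contained than what the paper offers.
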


\begin{pro}\label{prop:Heckemorphisms}
There is an isomorphism of $R$-modules
$$\Phi\colon R[H \backslash G /J] \rightarrow \Hom_{R[G]}(R[G/H], R[G/J]) $$ $$ \sum_{HgJ \in H \backslash G / J } \alpha_{HgJ} HgJ \mapsto \left\{
    \begin{array}{l}
        \phi \mbox{ such that } \\
        \phi(1 \cdot H) = \sum_{g \in G/J} \alpha_{HgJ} gJ\\
    \end{array}
\right. .$$
Its inverse is
$$\Phi^{-1}\colon \Hom_{R[G]}(R[G/H], R[G/J]) \mapsto R[H \backslash G /J] $$ $$ \phi \mapsto \left\{
    \begin{array}{l}
        \sum_{HgJ \in H \backslash G /J } \alpha_{gJ} H \delta J \\
        \mbox{where } \phi(1 \cdot H) = \sum_{g \in G/J} \alpha_{gJ} gJ \\
    \end{array}
\right. .$$
\end{pro}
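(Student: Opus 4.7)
The plan is to obtain $\Phi$ as a composition of the two isomorphisms of the previous lemmas. Applying lemma \ref{lm:fixed_points} with $V = R[G/J]$ (which is an $R[G]$-module) gives an $R$-module isomorphism
$$\Phi_1 \colon \Hom_{R[G]}(R[G/H], R[G/J]) \longrightarrow R[G/J]^H, \quad \phi \mapsto \phi(1 \cdot H).$$
The second lemma provides the $R$-module isomorphism $\Phi_2 \colon R[H\backslash G/J] \to R[G/J]^H$. Since both are isomorphisms, the composition $\Phi := \Phi_1^{-1} \circ \Phi_2$ is an isomorphism of $R$-modules from $R[H\backslash G/J]$ to $\Hom_{R[G]}(R[G/H], R[G/J])$.

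It remains to check that this composition matches the formula given in the statement. Starting from $\sum_{HgJ \in H\backslash G/J} \alpha_{HgJ}\, HgJ$, the map $\Phi_2$ sends it to $\sum_{gJ \in G/J} \alpha_{HgJ}\, gJ \in R[G/J]^H$, and then $\Phi_1^{-1}$ sends this element $x$ to the unique $R[G]$-morphism $\phi \colon R[G/H] \to R[G/J]$ satisfying $\phi(1 \cdot H) = x$. This is exactly the formula for $\Phi$ in the statement. The formula for $\Phi^{-1} = \Phi_2^{-1} \circ \Phi_1$ is obtained symmetrically: given $\phi$, first form $\phi(1 \cdot H) = \sum_{gJ} \alpha_{gJ}\, gJ \in R[G/J]^H$, then apply $\Phi_2^{-1}$ to get $\sum_{HgJ} \alpha_{gJ}\, HgJ$.

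There is no real obstacle: the proposition is essentially a repackaging of the two lemmas, and the verification is purely formal. The only point requiring any care is being consistent about which coset representatives label which coefficients, in particular checking that the double coset coefficient $\alpha_{HgJ}$ is well-defined (independent of the choice of $g$ in $HgJ$), but this is precisely what lemma \ref{lm:fixed_points} and the preceding lemma guarantee, since the intermediate element lies in $R[G/J]^H$.
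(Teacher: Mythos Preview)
Your proof is correct and follows exactly the paper's approach: the paper also obtains $\Phi$ by composing $\Phi_1$ and $\Phi_2$ from the two preceding lemmas. You have simply spelled out the verification of the explicit formulas in more detail than the paper does.
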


\begin{proof}

We can then obtain the isomorphism $\Phi$ simply by composing $\Phi_1$ and $\Phi_2$ from the two previous lemmas.

\end{proof}

\begin{ft}
By considering both the isomorphism $\Phi$ in proposition \ref{prop:Heckemorphisms} and the isomorphism $\Phi_1$ in lemma \ref{lm:fixed_points}, we deduce that given any $R[G]$-module $V$, for every element $HgJ$ of $R[H \backslash G / J]$ we get a morphism $T_{HgJ}$ of $R$-modules from $V^J$ to $V^H$ given by the following diagram:


\begin{center}
    \begin{tikzpicture}[->,>=stealth',shorten >=2pt,auto,semithick]
        \node (A) {$V^J$};
        \node (B) [right of=A, xshift=8cm] {$V^H$};
        \node (C) [below of=A,, yshift = -4cm] {$\Hom_{R[G]}(R[G/J], V)$};
        \node (D) [below of=B, yshift = -4cm] {$\Hom_{R[G]}(R[G/H], V)$};
        \node (E) [below right of=A, xshift = 0.8cm, yshift = -0.4cm] {$x$};
        \node (F) [below left of=B, xshift = -1.5cm, yshift = -0.4cm] {$\sum_{\underset{HgJ = H\delta J}{\delta \in G/J}} \delta x$};
        \node (G) [above right of=C, xshift = 0.8cm, yshift = 1cm] {$\gamma J \mapsto \gamma x$};
        \node (H) [above left of=D, xshift = -1.5cm, yshift = 1cm] {$\gamma H \mapsto \sum_{\underset{HgJ = H\delta J}{\delta \in G/J}} \gamma \delta x$};
        \path (A) edge [] node [above] {$T_{HgJ}$} (B);
        \path (A) edge [] (C);
        \path (B) edge [] (D);
        \path (C) edge [] node [below] {$\phi_{HgJ}$} (D);
        \path (E) edge [] (G);
        \path (G) edge [] (H);
        \path (H) edge [] (F);
        
    \end{tikzpicture}
\end{center}

Where the expression of $\phi_{HgJ}$ is obtained via the following diagram:

\begin{center}
    \begin{tikzpicture}[->,>=stealth',shorten >=2pt,auto, semithick]
        \node (A) {$R[H \backslash G / J]$};
        \node (B) [right of=A, xshift = 9.5cm] {$R[G/J]^H$};
        \node (C) [below of=B, yshift = -5cm] {$\Hom_{R[G]}(R[G/H], R[G/J])$};
        \node (D) [below right of=A, yshift = -2.5mm, xshift = 3cm] {$\sum_{g \in H \backslash G / J} \alpha_{HgJ}HgJ$};
        \node (E) [below left of=B,  yshift = -2.5mm, xshift = -1.5cm] {$\sum_{g \in G / J} \alpha_{HgJ}gJ$};
        \node (F) [above left of=C, yshift = 2.5cm, xshift = -1.5cm] {$\gamma H \mapsto \sum_{\underset{HgJ = H\gamma J}{g \in G/J}} \alpha_{HgJ}\gamma g J$};
        \path (A) edge [] (B);
        \path (B) edge [] (C);
        \path (A) edge [] (C);
        \path (D) edge [] (E);
        \path (E) edge [] (F);
        \path (D) edge [] (F);        
    \end{tikzpicture}
\end{center}

\end{ft}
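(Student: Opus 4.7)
The plan is simply to follow an arbitrary $x \in V^J$ through the chain of isomorphisms drawn in the first diagram and read off the formula at the top. I would start by applying Lemma \ref{lm:fixed_points} with $J$ in place of $H$: this produces the unique morphism $\psi_x \in \Hom_{R[G]}(R[G/J], V)$ with $\psi_x(1 \cdot J) = x$, and $R[G]$-linearity forces $\psi_x(\gamma J) = \gamma x$ for every $\gamma \in G$. The hypothesis $x \in V^J$ is precisely what guarantees that this prescription is well-defined on cosets.

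Next, by Proposition \ref{prop:Heckemorphisms} the double coset $HgJ$ corresponds to $\phi_{HgJ} \in \Hom_{R[G]}(R[G/H], R[G/J])$ characterised by $\phi_{HgJ}(1 \cdot H) = \sum_{\delta \in G/J,\, H\delta J = HgJ} \delta J$, with the $R[G]$-equivariant extension to all of $R[G/H]$. Composing gives $\psi_x \circ \phi_{HgJ} \in \Hom_{R[G]}(R[G/H], V)$; applying $\Phi_1$ for $H$ to extract the image of $1 \cdot H$ yields
\[
T_{HgJ}(x) \;=\; \psi_x\!\left( \sum_{\delta} \delta J \right) \;=\; \sum_{\delta \in G/J,\, H\delta J = HgJ} \delta x,
\]
which is exactly the formula advertised at the top of the first diagram. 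Each step is an $R$-module isomorphism, so $T_{HgJ}$ is automatically $R$-linear, and its image lands in $V^H$ because $\Phi_1$ takes values in $V^H$. The second diagram is just an unpacking of the decomposition $\Phi = \Phi_2 \circ \Phi_1^{-1}$ already proved in Proposition \ref{prop:Heckemorphisms}, so no new work is needed there; writing it out is only a matter of reading off what each factor does on the explicit generator $HgJ$.

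The one point meriting care is the well-definedness of the summation: I would check that replacing $\delta$ by $\delta j$ for $j \in J$ leaves $\delta x$ unchanged, using $x \in V^J$, so the sum depends only on the single cosets $\delta J$ contained in the double coset $HgJ$; and that left multiplication by $h \in H$ permutes these single cosets, confirming directly that $T_{HgJ}(x) \in V^H$ without appealing to $\Phi_1$. I do not foresee any real obstacle: the statement is essentially a bookkeeping corollary of the two preceding isomorphisms, and the diagrams make the composition transparent.
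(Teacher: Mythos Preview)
Your proposal is correct and follows exactly the diagram chase that the paper itself presents as the content of this fact: you apply $\Phi_1^{-1}$ (with $J$) to $x$, precompose with $\phi_{HgJ}$ coming from Proposition~\ref{prop:Heckemorphisms}, and then apply $\Phi_1$ (with $H$) to read off the formula. The paper gives no separate proof beyond the two diagrams, so your unpacking of the composition and the small well-definedness checks are precisely what the statement intends.
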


\begin{req}
    
This last proposition gives a natural action of~$R[H \backslash G / J]$ that goes from $R[G/H]$ into $R[G/J]$. With the next proposition, we will see that this can also be seen as an action from the set $\Hom(K, \C)$ of complex embeddings of $K$ into the set of complex embeddings of $L$.

\end{req}

\begin{req}
    The Galois group $G$ acts on the set $\Hom(K, \C)$ by $g \cdot \sigma = \sigma \circ g$.

    Recall that $\alpha$ is an element of $\C$ such that $K = \Q(\alpha)$, and $f$ is the minimal polynomial of $\alpha$ over $\Q$.
    Let $\sigma \in \Hom(K, \C)$ and $g \in G$. The embedding $\sigma$ sends $\alpha$ to a complex root of $f$. Then $g \cdot \sigma$ is the element of $\Hom(K, \C)$ that sends $\alpha$ to $g \cdot \sigma(\alpha)$.
\end{req}

\begin{pro} \label{pro:quotient_embedd}
We note $E = \Hom(K, \C)$ the set of embeddings of $K$ in $\C$, and by $\sigma_g$ the embedding that maps $\alpha$ to $g\cdot \alpha$ for all $g \in G$. 

There is an isomorphism $$\Phi\colon G/H \rightarrow E, gH \mapsto \sigma_g.$$
Its inverse is $$\Phi^{-1}\colon S \rightarrow G/H, \tau_g \alpha \mapsto gH.$$
It is independent from the choice of $g$.
\end{pro}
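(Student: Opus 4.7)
The plan is to verify that $\Phi$ is well-defined, is a bijection, and that the claimed inverse does not depend on the chosen lift $g$. The crucial observation used throughout is that $H$ coincides with the stabiliser of $\alpha$ in $G$: one inclusion is immediate since $\alpha \in K$ and $H$ fixes $K$ pointwise; conversely, any element of $G$ fixing $\alpha$ fixes all of $\Q(\alpha) = K$, hence lies in $H$. This single identification makes every other step essentially tautological.

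For well-definedness, if $g_1 H = g_2 H$ I would write $g_2 = g_1 h$ for some $h \in H$; then $g_2 \cdot \alpha = g_1 h \cdot \alpha = g_1 \cdot \alpha$, so the embeddings $\sigma_{g_1}$ and $\sigma_{g_2}$, being determined by their common value at the generator $\alpha$, coincide. The same argument proves the independence statement for $\Phi^{-1}$: if $g$ and $g'$ both satisfy $g \cdot \alpha = \tau(\alpha) = g' \cdot \alpha$, then $g^{-1} g'$ fixes $\alpha$, hence lies in $H$, so $gH = g'H$.

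For bijectivity, I would argue surjectivity via the standard fact that an embedding $\tau \colon K \hookrightarrow \C$ is determined by $\tau(\alpha)$, which must be a root of $f$. Since $\tilde{K}$ is the Galois closure of $K$, it contains all roots of $f$, and $G$ acts transitively on them, so the orbit of $\alpha$ under $G$ is precisely the set of these roots. Hence any $\tau \in E$ can be written as $\sigma_g$ for some $g \in G$, giving surjectivity. Injectivity then follows by the stabiliser argument above: $\sigma_{g_1} = \sigma_{g_2}$ forces $g_1 \cdot \alpha = g_2 \cdot \alpha$, so $g_1^{-1} g_2 \in H$, whence $g_1 H = g_2 H$.

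There is no serious obstacle here: once $H$ is identified with the stabiliser of $\alpha$ and one uses that embeddings into $\C$ are determined by their image on $\alpha$, the three verifications each reduce to a one-line computation. The only subtlety worth flagging is that surjectivity implicitly relies on $G$ acting transitively on the roots of $f$, which is exactly the Galois property of $\tilde{K}/\Q$.
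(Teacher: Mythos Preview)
Your proof is correct and follows the same approach as the paper: the key observation that $H$ is the stabiliser of $\alpha$ drives the well-definedness argument in both. The paper's own proof only spells out the well-definedness step and leaves bijectivity and the independence of $\Phi^{-1}$ implicit, so your version is in fact more complete.
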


\begin{proof}

 Let $g_1, g_2 \in G$ such that $g_1 H = g_2 H$, and let $h \in H$ such that $g_2 = g_1 h$. Then, $\sigma_{g_2}$ maps $\alpha$ to $g_1 \cdot (h \cdot \alpha) = g_1 \cdot \alpha$. So $\sigma_{g_2} = \sigma_{g_1}$.

\end{proof}

Now from these actions, we will deduce an action of compositums. First let us recall the following definition.

\begin{df}{}
Let $K$ and $L$ be number fields.
A \emph{compositum} of $K$ and $L$ is a triple $(C, \iota_K, \iota_L)$ where $C/\Q$ is a number field, $\iota_K\colon K \rightarrow C$ and $\iota_L\colon L \rightarrow C$ are morphisms of $\Q$-algebras, and where $C$ is generated by $\iota_K(K)$ and $\iota_L(L)$ as a ring.
\end{df}

\begin{ex}

Consider the following diagram, with $\zeta := e^{\frac{2i\pi}{3}}$.

\begin{center}
    \begin{tikzpicture}[->,>=stealth',shorten >=2pt,auto, semithick]
        \node (A) {$C = \Q(\sqrt[3]{2}, \zeta) = \Tilde{K} $};
        \node (B) [left of=A, xshift = -1.5cm, yshift = -2.5 cm] {$K = \Q(\sqrt[3]{2})$};
        \node (C) [right of =A , xshift = 1.5cm, yshift = -2.5cm] {$L = \Q(\zeta)$};
        \node (D) [below of=A, yshift = -4.2cm] {$\Q$};
        \draw [-] (A) to node [above] {$H$} (B); 
        \draw[-] (A) to node [above] {$J$} (C);
        \draw[-] (A) to node [right] {$G$} (D);
        \draw[-] (B) to (D);
        \draw[-] (C) to (D);
    \end{tikzpicture} 
\end{center}

Let $\iota_K\colon K \rightarrow C$ be the inclusion, and $\iota_L\colon L \rightarrow C$ also the inclusion. It is clear that $C$ is generated by $\iota_K(K)$ and $\iota_L(L)$, so $C$ is a compositum of $K$ and $L$.

Here, we have $C = \Tilde{K}$. We will see that up to isomorphism, every compositum of $K$ and $L \subset \Tilde{K}$ is included in $\Tilde{K}$.

Note that if we take $\iota_{K,2}\colon K \rightarrow C$ the inclusion and $\iota_{L, 2}\colon K \rightarrow C_2, \zeta \mapsto \overline{\zeta}$, then $(C, \iota_{K,2}, \iota_{L, 2})$ is another compositum of $K$ and $L$.

\end{ex}

\begin{df}{}

A \emph{morphism of compositums} between two compositums $(C, \iota_K, \iota_L)$ and $(C', \iota_K', \iota_L')$ is a field morphism $f\colon C \rightarrow C'$, such that $\iota_K' = f \circ \iota_K$ and $\iota_L' = f \circ \iota_L$.

\end{df}

The two following lemma give some properties of compositums.


\begin{lm}\label{lm:compoKL}
Up to isomorphism, there is a finite number of compositums of $K$ and $L$, we denote by $\Compos(K,L)$ a set of representatives. There is a bijection between this set and the set of quotients of $K \otimes_\Q L$.
For $f\colon K \otimes_\Q L \rightarrow C$ surjective, the associated compositum is $(C, \iota_K, \iota_L)$ and $\iota_K = f \circ (\id_K \otimes 1)$, $\iota_L = f \circ (\id_L \otimes 1)$.
Every compositum of $K$ and $L$ is isomorphic to a compositum whose underlying field is contained in $\Tilde{K}$.
\end{lm}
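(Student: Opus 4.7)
The plan is to parametrize compositums via the universal property of the tensor product, exploit the structure of $K \otimes_\Q L$ as a finite product of number fields for the finiteness statement, and pick a suitable embedding into $\overline{\Q}$ for the last claim.

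First I would use the universal property of the tensor product of $\Q$-algebras: giving a $\Q$-algebra morphism $f\colon K \otimes_\Q L \to C$ is the same as giving a pair of $\Q$-algebra morphisms $\iota_K\colon K \to C$ and $\iota_L\colon L \to C$, related by $\iota_K = f \circ (\id_K \otimes 1)$ and $\iota_L = f \circ (1 \otimes \id_L)$. The condition that $C$ be generated as a ring by $\iota_K(K)$ and $\iota_L(L)$ translates into surjectivity of $f$, and then the condition that $C$ be a field is equivalent to $\ker f$ being a maximal ideal. I would check that two such surjections $f, f'$ yield isomorphic compositums in the sense of the definition if and only if $\ker f = \ker f'$: any isomorphism $\phi\colon C \to C'$ of compositums must satisfy $\phi \circ f = f'$ on the generators $x \otimes 1$ and $1 \otimes y$, hence everywhere, forcing equal kernels, and conversely equal kernels give a canonical such $\phi$. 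This yields the desired bijection.

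For finiteness, I would observe that since $K$ and $L$ are separable over $\Q$ (characteristic zero), the finite-dimensional commutative $\Q$-algebra $K \otimes_\Q L$ is reduced; being Artinian, the Chinese Remainder Theorem gives a decomposition $K \otimes_\Q L \simeq \prod_i F_i$ into a finite product of number fields. Its maximal ideals are then exactly the kernels of the projections onto each factor $F_i$, so there are finitely many, and each field quotient is isomorphic to some $F_i$.

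For the last statement, I would use that by the standing hypothesis $L = \Tilde{K}^J \subset \Tilde{K}$, together with $K \subset \Tilde{K}$ and $\Tilde{K}/\Q$ being Galois, so every $\Q$-algebra embedding of $K$ or of $L$ into a fixed algebraic closure $\overline{\Q}$ lands inside $\Tilde{K}$. Given a compositum $(C, \iota_K, \iota_L)$, pick any embedding $j\colon C \hookrightarrow \overline{\Q}$; then $j \circ \iota_K(K) \subset \Tilde{K}$ and $j \circ \iota_L(L) \subset \Tilde{K}$, so the subring they generate, which equals $j(C)$, is contained in $\Tilde{K}$. Hence $(j(C), j \circ \iota_K, j \circ \iota_L)$ is a compositum isomorphic to the original one and with underlying field inside $\Tilde{K}$. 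The main (mild) obstacle is bookkeeping in the first step, namely verifying that ``isomorphism of compositums'' in the sense of the paper really matches ``equality of kernels'' for the induced surjections; everything else is a direct application of standard facts about finite étale $\Q$-algebras.
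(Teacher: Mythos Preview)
Your proof is correct and covers all the claims carefully, in fact more carefully than the paper does: you spell out why isomorphism classes of compositums correspond to maximal ideals of $K\otimes_\Q L$, which the paper simply invokes as ``a direct application of the universal property''. The only substantive methodological difference is in the last statement. The paper writes $K\otimes_\Q L \cong L[X]/(p(X)) \cong \prod_i L[X]/(p_i(X))$ explicitly and then argues that each factor $L[X]/(p_i(X))$ embeds in $\Tilde K$ because $\Tilde K$ contains $L$ and all roots of $p$. You instead choose an abstract embedding $j\colon C\hookrightarrow\overline{\Q}$ and use normality of $\Tilde K/\Q$ to force $j\circ\iota_K(K)$ and $j\circ\iota_L(L)$ inside $\Tilde K$. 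Both are standard; the paper's version has the advantage of making the factors concrete (which is exploited later in the algorithms), while yours is cleaner and avoids writing down the factorisation at all.
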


\begin{proof}

The second statement is a direct application of the universal property of the tensor product of algebras.

Since $K \otimes_\Q L$ is of finite dimension over $\Q$, the set $\Compos(K,L)$ is finite.

Now let us prove the last statement. Denote $K = \Q[X] / p(X)$, with $p(X) \in \Q[X]$ irreducible. Denote $p(X) = \prod_i p_i(X)$ the decomposition of $p(X)$ into a product of irreducible polynomials in $L[X]$. Then $K \otimes_\Q L = \prod_i L[X] / (p_i(X))$. 
What's more, the polynomials $p_i$ are split in $L[X]$, so they are also split in $\Tilde{K}[X]$. So for every $i$, we have~$L[X] / (p_i(X)) \subset \Tilde{K}$, since~$\Tilde{K}$ contains~$L$ and a splitting field of the $p_i$. Hence the conclusion.
\end{proof}

\begin{lm} \label{lm:compoKL2}
The map $$\Psi\colon \Hom_{\Q \alg}(K,\Tilde{K}) \rightarrow \Compos(K,L), \phi \mapsto (\phi(K).L, \phi, \incl_{L/\Tilde{K}})$$ induces a bijection from $ J \backslash \Hom_{\Q\alg}(K,\Tilde{K})$ to $\Compos(K,L)/\sim$.
(Recall that $J$ is defined to be the subgroup of $G$ that fixes $L$)
\end{lm}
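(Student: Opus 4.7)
The plan is to verify that $\Psi$ is well defined, to show it is constant on left $J$-orbits, and that the induced map on $J\backslash \Hom_{\Q\alg}(K,\tilde K)$ is a bijection onto isomorphism classes of compositums. Well-definedness is immediate: for any $\phi \in \Hom_{\Q\alg}(K, \tilde K)$, the subfield $\phi(K)\cdot L \subseteq \tilde K$ is by construction generated by $\phi(K)$ and $L$, so $(\phi(K)\cdot L,\phi,\incl_{L/\tilde K})$ is indeed a compositum of $K$ and $L$.

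I would next show that $\Psi$ is invariant under left multiplication by $J = \Gal(\tilde K / L)$. For $j \in J$, one has $j(L)=L$, so $j(\phi(K)\cdot L) = (j\circ\phi)(K)\cdot L$, and the restriction $j|_{\phi(K)\cdot L}$ is a field isomorphism onto $(j\phi)(K)\cdot L$. It satisfies $j|_{\phi(K)\cdot L}\circ \phi = j\circ\phi$ and $j|_{\phi(K)\cdot L}\circ \incl_{L/\tilde K} = \incl_{L/\tilde K}$, hence provides an isomorphism of compositums between $\Psi(\phi)$ and $\Psi(j\phi)$. So $\Psi$ descends to a well-defined map $\bar\Psi\colon J\backslash\Hom_{\Q\alg}(K,\tilde K) \to \Compos(K,L)/\!\sim$.

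For injectivity of $\bar\Psi$, suppose a compositum isomorphism $f\colon \Psi(\phi)\to\Psi(\phi')$ is given, i.e.\ a field morphism $f\colon \phi(K)\cdot L \to \phi'(K)\cdot L$ with $f\circ\phi=\phi'$ and $f\circ\incl_{L/\tilde K}=\incl_{L/\tilde K}$. Since $\tilde K/\Q$ is normal, $f$ extends to an automorphism $\tilde f \in G$ of $\tilde K$. The constraint on $L$ forces $\tilde f \in J$, and the constraint on $\phi$ then reads $\tilde f\cdot \phi = \phi'$, so $\phi$ and $\phi'$ lie in the same $J$-orbit. For surjectivity, Lemma~\ref{lm:compoKL} lets me assume an arbitrary compositum has underlying field $C \subseteq \tilde K$. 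The two $\Q$-embeddings $\iota_L$ and $\incl_{L/\tilde K}$ of $L$ into $\tilde K$ differ by some element $g\in G$ (normality again), i.e.\ $g\circ \iota_L = \incl_{L/\tilde K}$. Then $g|_C$ realises an isomorphism of compositums from $(C,\iota_K,\iota_L)$ onto $(g(C), g\circ\iota_K, \incl_{L/\tilde K}) = \Psi(g\circ\iota_K)$, since $g(C) = g(\iota_K(K))\cdot g(\iota_L(L)) = (g\circ\iota_K)(K)\cdot L$.

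The conceptual crux of the argument consists in the two appeals to normality of $\tilde K/\Q$: extending the local isomorphism $f$ to a global automorphism of $\tilde K$ in the injectivity step, and aligning $\iota_L$ with $\incl_{L/\tilde K}$ in the surjectivity step. Both are standard consequences of $\tilde K$ being a Galois closure, but they are precisely what makes the statement clean and what would fail without that hypothesis; the rest of the proof is essentially unwinding definitions.
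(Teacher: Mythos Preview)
Your proof is correct and follows essentially the same approach as the paper: show $\Psi$ is constant on $J$-orbits, prove injectivity by extending a compositum isomorphism to an element of $G$ fixing $L$ (hence lying in $J$), and prove surjectivity by invoking Lemma~\ref{lm:compoKL} to place the compositum inside $\tilde K$. Your surjectivity argument is in fact slightly more explicit than the paper's, since you spell out the element $g\in G$ that aligns $\iota_L$ with the inclusion, whereas the paper asserts directly that one may take $\iota_L=\incl_{L/\tilde K}$.
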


\begin{proof}
Let $\phi \in \Hom_{\Q\alg}(K,\Tilde{K})$. The composition by $g \in J$ induces an isomorphism $(\phi(K).L, \phi, \incl_{L/\Tilde{K}}) \rightarrow (g.\phi(K).L, g.\phi, g.\incl_{L/\Tilde{K}})$.
Since $g \in J$, $g$ fixes $L$, so  $g.\incl_{L/\Tilde{K}} = \incl_{L/\Tilde{K}}$. So the isomorphism induced by $g$ is of the form $\Psi(\phi) \rightarrow \Psi( g \cdot \phi)$. 
Let us check that the map induced by $\Psi$ is injective. Let $\phi, \phi ' \in \Hom_{\Q\alg}(K,\Tilde{K})$ and let $f\colon \phi(K) \cdot L \rightarrow \phi'(K) \cdot L$ an isomorphism of compositums. Then $f \circ \incl_{\Tilde{K}/L} = \incl_{\Tilde{K}/L}$ and $f$ is the identity over $L$, so $f$ can be extended as an isomorphism $g \in J$. Since $f$ is a morphism of compositums, $g \phi = \phi'$, hence $\phi \sim \phi '$.

Let us check it is surjective. By lemma \ref{lm:compoKL}, every compositum in $\Compos(K, L)$ is isomorphic to a compositum where $\iota_L = \incl_{L/ \Tilde{K}}$. Let $\iota_K\colon K \rightarrow \Tilde{K}$ be an embedding, then we can always pick $\phi = \iota_K$.
\end{proof}

\begin{pro}\label{pro:HeckeCompo}
There is a bijection
$$\Phi \colon J\backslash G / H \rightarrow \Compos(K,L), JgH \mapsto (gK \cdot L, l \mapsto gl \cdot 1_{L}, \incl_{L/\tilde{K}}).$$
Its inverse is
$$\Phi^{-1}\colon \Compos(K,L) \rightarrow J\backslash G / H, (C, \iota_K, \iota_L) \mapsto \left\{ \begin{array}{l}
     JgH  \\
     \mbox{with } g \mbox{ such that }\\
     \iota_K(\alpha) = g \cdot \alpha \in \tilde{K}\\
\end{array}
\right. .$$
\end{pro}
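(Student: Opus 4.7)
The plan is to combine Proposition \ref{pro:quotient_embedd} with Lemma \ref{lm:compoKL2}, with the intermediate object being $\Hom_{\Q\alg}(K,\tilde{K})$, and then unravel the formulas.

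First, I would upgrade Proposition \ref{pro:quotient_embedd} from a bijection $G/H \to \Hom(K,\C)$ to a bijection $G/H \to \Hom_{\Q\alg}(K,\tilde{K})$. Since $\alpha \in \tilde{K}$ and $\tilde{K}$ is $G$-stable, the embedding $\sigma_g$ defined by $\alpha \mapsto g\cdot\alpha$ already factors through $\tilde{K}$, so $\sigma_g \in \Hom_{\Q\alg}(K,\tilde{K})$; conversely every $\Q$-algebra embedding $K\to \tilde{K}$ sends $\alpha$ to some root of $f$ in $\tilde{K}$, and $G$ acts transitively on these roots. Next, I would verify that the natural left $G$-action by post-composition on $\Hom_{\Q\alg}(K,\tilde{K})$, namely $g'\cdot\sigma := g'\circ\sigma$, corresponds under this bijection to left multiplication on $G/H$: indeed $g'\cdot \sigma_g$ sends $\alpha$ to $g'(g\alpha) = (g'g)\cdot\alpha$, hence equals $\sigma_{g'g}$. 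Quotienting both sides by the left action of $J$ yields a bijection
$$J\backslash G/H \;\longrightarrow\; J\backslash \Hom_{\Q\alg}(K,\tilde{K}).$$

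Then I would compose with the bijection $\Psi$ of Lemma \ref{lm:compoKL2}, which sends the class of an embedding $\phi\colon K\to \tilde{K}$ to the class of the compositum $(\phi(K)\cdot L,\,\phi,\,\incl_{L/\tilde{K}})$. Starting from $JgH$, tracing through gives the class of $(\sigma_g(K)\cdot L,\,\sigma_g,\,\incl_{L/\tilde{K}}) = (gK\cdot L,\,k\mapsto g\cdot k,\,\incl_{L/\tilde{K}})$, which is exactly the formula for $\Phi$ stated in the proposition. For the inverse, given any representative $(C,\iota_K,\iota_L)$, Lemma \ref{lm:compoKL} allows us to assume $C\subset \tilde{K}$ and $\iota_L = \incl_{L/\tilde{K}}$, so $\iota_K(\alpha)$ is a root of $f$ inside $\tilde{K}$ and therefore equals $g\cdot\alpha$ for some $g\in G$, unique modulo $H$ (by the proven bijectivity of $G/H \to \Hom_{\Q\alg}(K,\tilde{K})$); two choices of representative differ by post-composing $\iota_K$ with an element of $J$, which changes $g$ by an element of $J$ on the left.

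The only substantive point, and the one I would check most carefully, is the compatibility of the three $J$-actions in play: left multiplication on $G/H$, post-composition on $\Hom_{\Q\alg}(K,\tilde{K})$, and the isomorphism relation on compositums fixing $L$. The first two match by the computation above, and the second and third match by the argument already carried out in the proof of Lemma \ref{lm:compoKL2} (an element $j\in J$ induces an isomorphism of compositums $(\phi(K)\cdot L,\phi,\incl) \to (j\phi(K)\cdot L, j\phi, \incl)$ since $j$ fixes $L$ pointwise). Once these compatibilities are in place, the proposition follows by transporting structures; the explicit formula for $\Phi^{-1}$ is then simply a restatement of "pick $g$ such that $\iota_K(\alpha) = g\cdot\alpha$".
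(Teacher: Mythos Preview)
Your proposal is correct and follows essentially the same route as the paper: the paper's one-line proof simply says the result ``derives from the two lemmas \ref{lm:compoKL} and \ref{lm:compoKL2}'', and your argument is precisely a careful unpacking of that, factoring through the bijection $G/H \cong \Hom_{\Q\alg}(K,\tilde K)$ (the slight upgrade of Proposition~\ref{pro:quotient_embedd}) and then quotienting by $J$ before applying Lemma~\ref{lm:compoKL2}. Your explicit check of the compatibility of the three $J$-actions is exactly the detail the paper leaves implicit.
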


\begin{proof}
The proposition derives from the two lemmas \ref{lm:compoKL} and \ref{lm:compoKL2}.
\end{proof}

Thus, using the previous isomorphisms, we obtain an action of $\Compos(K,L)$ on various $R$-modules.

\begin{pro} \label{pro:Compo_morph_Hecke}
The map
$$\Phi\colon \Compos(K, L) \rightarrow \Hom_{R[G]}(R[G/J], R[G/H])$$ $$ (C, \iota_K, \iota_L) \mapsto 
\left\{ \begin{array}{l}
     \phi \mbox{ such that }\\
      \phi(1 \cdot J) = \sum_{\underset{J\gamma H = J g H}{\gamma H \in G/H}} \gamma H \\
      \mbox{with } g \mbox{ such } g\cdot \alpha = \iota_K(\alpha)
\end{array}
\right.$$
is injective.
\end{pro}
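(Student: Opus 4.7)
The plan is to realise $\Phi$ as a composition of three maps already constructed earlier in the section, so that injectivity drops out for free. Concretely, I would factor it as
\[
\Compos(K,L) \xrightarrow{\Phi_{\ref{pro:HeckeCompo}}^{-1}} J\backslash G/H \hookrightarrow R[J\backslash G/H] \xrightarrow{\sim} \Hom_{R[G]}(R[G/J], R[G/H]),
\]
where the first arrow is the inverse of the bijection of proposition \ref{pro:HeckeCompo}, the second is the canonical inclusion of the set of double cosets as the standard $R$-basis of the Hecke algebra, and the third is the isomorphism produced by applying proposition \ref{prop:Heckemorphisms} with the roles of $H$ and $J$ swapped.

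Next I would do a direct chase to confirm that this composition matches the formula in the statement. Starting from $(C,\iota_K,\iota_L)$, the first arrow returns the double coset $JgH$ for any $g \in G$ with $g\cdot\alpha = \iota_K(\alpha)$; applying proposition \ref{prop:Heckemorphisms} (swapped) to the basis vector $JgH$ produces the unique $R[G]$-morphism $\phi$ characterised by $\phi(1\cdot J) = \sum_{\gamma H \in G/H,\ J\gamma H = JgH}\gamma H$, which is precisely the $\phi$ described in the proposition. Injectivity of $\Phi$ is then automatic: the first map is bijective, the second sends distinct double cosets to distinct basis vectors, and the third is an isomorphism, so the composition is injective.

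The main point requiring care is well-definedness of the intermediate double coset: different choices of $g$ representing the same compositum must produce the same morphism. By proposition \ref{pro:quotient_embedd}, $H$ is exactly the stabiliser of $\alpha$ under the $G$-action, so the condition $g\cdot\alpha = \iota_K(\alpha)$ determines $g$ only up to right multiplication by $H$. Hence the coset $gH$, and a fortiori the double coset $JgH$, is unambiguous; consequently the sum $\sum \gamma H$ defining $\phi(1\cdot J)$ does not depend on the chosen representative. Once this ambiguity is dispensed with (and this is really the only obstacle), the rest of the argument is bookkeeping through the two bijections already in place.
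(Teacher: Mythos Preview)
Your proposal is correct and takes essentially the same approach as the paper: the paper's proof is the one-line ``this is derived from proposition \ref{pro:HeckeCompo}, using the isomorphism of proposition \ref{prop:Heckemorphisms}'', and you have simply spelled out the factorisation explicitly and added the well-definedness check on the choice of $g$. There is nothing to add.
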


\begin{proof}

This is derived from the proposition  \ref{pro:HeckeCompo}, using the isomorphism of proposition \ref{prop:Heckemorphisms}. 
\end{proof}

For all $\alpha'$ a root of $f$, we denote by $\sigma_{\alpha'}$ the embedding of $K$ in $\C$ that sends $\alpha$ to $\alpha'$.
Similarly, denote $\tau_{\beta'}$ the embedding of $L$ in $\C$ that sends $\beta$ to $\beta'$.



\begin{pro}
The map
\begin{align*}
\Phi\colon \Compos(K, L) &\rightarrow &\Hom_{R[G]}(R[\Hom(L, \C)] R[\Hom(K, \C)]), \\
(C, \iota_K, \iota_L) &\mapsto &
\left\{ \begin{array}{l}
     \phi \mbox{ such that }\\
      \phi(\tau_{\beta}) = \sum_{\underset{(C, \iota_K, \iota_L) \sim (C', \sigma, \tau_{\beta})}{\sigma \in \Hom(K, \C)}} \sigma 
\end{array}
\right.
\end{align*}

is injective.
\end{pro}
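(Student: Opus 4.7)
The plan is to deduce this statement by simply transporting Proposition~\ref{pro:Compo_morph_Hecke} along the bijections coming from Proposition~\ref{pro:quotient_embedd}. The bijection $G/H\to \Hom(K,\C)$, $gH\mapsto \sigma_g$, is $G$-equivariant (since $g'\cdot \sigma_g$ sends $\alpha$ to $g'g\cdot\alpha$, i.e.\ equals $\sigma_{g'g}$), and the analogous bijection $G/J\to \Hom(L,\C)$, $gJ\mapsto \tau_{g\cdot\beta}$, is also $G$-equivariant. Extending $R$-linearly, these induce isomorphisms of $R[G]$-modules $R[G/H]\cong R[\Hom(K,\C)]$ and $R[G/J]\cong R[\Hom(L,\C)]$, whence a canonical isomorphism
\[
\Hom_{R[G]}(R[G/J], R[G/H]) \;\cong\; \Hom_{R[G]}(R[\Hom(L,\C)], R[\Hom(K,\C)]).
\]

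First I would state this transport isomorphism explicitly. Then I would compose it with the injection of Proposition~\ref{pro:Compo_morph_Hecke} to obtain an injective map from $\Compos(K,L)$ to $\Hom_{R[G]}(R[\Hom(L,\C)], R[\Hom(K,\C)])$; injectivity is immediate because it is the composition of an injection and an isomorphism. The only real content is to check that the formula obtained by this composition is the one in the statement.

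For the verification, take $(C,\iota_K,\iota_L)\in\Compos(K,L)$ and pick $g\in G$ with $g\cdot\alpha=\iota_K(\alpha)$ (so $JgH$ is the double coset associated to $(C,\iota_K,\iota_L)$ by Proposition~\ref{pro:HeckeCompo}). Under $R[G/J]\cong R[\Hom(L,\C)]$ the element $1\cdot J$ corresponds to $\tau_\beta$. By Proposition~\ref{pro:Compo_morph_Hecke}, the associated morphism sends $1\cdot J$ to $\sum \gamma H$ over the cosets $\gamma H\in G/H$ satisfying $J\gamma H=JgH$. Transporting to embeddings, $\gamma H$ becomes $\sigma_\gamma$, and via Proposition~\ref{pro:HeckeCompo} the equality $J\gamma H=JgH$ translates into the isomorphism of compositums $(\gamma K\cdot L,\sigma_\gamma,\incl_{L/\tilde K})\sim(C,\iota_K,\iota_L)$, i.e.\ $(C,\iota_K,\iota_L)\sim(C',\sigma_\gamma,\tau_\beta)$ for some $C'$. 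This is exactly the summation index appearing in the statement.

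The main (and essentially the only) obstacle is this final unwinding: keeping track of which side is $K$ and which is $L$, and making sure the double-coset condition $J\gamma H=JgH$ transforms into the stated equivalence of compositums. Once this dictionary is set up, injectivity of $\Phi$ follows from injectivity of the map in Proposition~\ref{pro:Compo_morph_Hecke}.
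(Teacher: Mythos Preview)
Your proposal is correct and follows exactly the paper's approach: the paper's proof is the single sentence ``This is derived from proposition~\ref{pro:Compo_morph_Hecke}, using the isomorphism of proposition~\ref{pro:quotient_embedd}.'' Your version is simply a more detailed unwinding of this transport, including the explicit verification that the formula matches, which the paper omits.
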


\begin{proof}
    This is derived from proposition \ref{pro:Compo_morph_Hecke}, using the isomorphism of proposition \ref{pro:quotient_embedd}.
\end{proof}

\begin{req}

Let $(C, \iota_K, \iota_L)$ be a compositum of $K$ and $L$, and let $\phi$ the corresponding element of $\Hom_{R[G]}(R[\Hom(L, \C)], R[\Hom(K, \C)])$. We can obtain a nicer way to write $\phi(\tau_{\beta})$:
$$\phi(\tau_{\beta}) = \sum_{\underset{(C, \iota_K, \iota_L) \sim (C', \sigma, \tau_{\beta})}{\sigma \in \Hom(K, \C)}} \sigma  = 
\sum_{\sigma \in \Hom(K, \C)}
\sum_{E_\sigma} \sigma$$

where $E_\sigma = \{ f \in \Hom(C, \C) | \sigma = f \circ \iota_K \mbox{ and } \tau_{\beta} = f \circ \iota_L \}$.
    
\end{req}

And from that form we can deduce a general expression for $\phi(\tau)$ for every complex embedding $\tau$.

\begin{pro} \label{pro:action_compo_embedd}
Let $(C, \iota_K, \iota_L)$ be a compositum of $K$ and $L$, and let $\phi$ the corresponding element of $\Hom_{R[G]}(R[\Hom(L, \C)], R[\Hom(K, \C)])$. 
For all $\tau \in \Hom(L, \C)$, $$\phi(\tau) = 
\sum_{\sigma \in \Hom(K, \C)}
\sum_{E_\sigma} \sigma$$
where $E_\sigma = \{ f \in \Hom(C, \C) | \sigma = f \circ \iota_K \mbox{ and } \tau_{1_G} = f \circ \iota_L \}$.
\end{pro}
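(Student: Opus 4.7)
The plan is to bootstrap from the preceding remark (which computes $\phi(\tau_\beta)$) to an arbitrary embedding $\tau$, using only the $R[G]$-equivariance of $\phi$ and the transitivity of the $G$-action on $\Hom(L,\C)$.

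First I would recall that, by Proposition \ref{pro:quotient_embedd} applied to $L$, the map $G/J \to \Hom(L,\C)$, $gJ \mapsto \tau_g$, is a $G$-equivariant bijection, where $\tau_g$ is the embedding sending $\beta$ to $g\cdot\beta$. In particular $G$ acts transitively on $\Hom(L,\C)$, and any $\tau \in \Hom(L,\C)$ may be written as $\tau = g \cdot \tau_{1_G}$ for some $g \in G$. Since $\phi$ is $R[G]$-linear:
\begin{equation*}
\phi(\tau) \;=\; \phi(g \cdot \tau_{1_G}) \;=\; g \cdot \phi(\tau_{1_G}) \;=\; g \cdot \sum_{\sigma \in \Hom(K,\C)} \sum_{f \in E_\sigma^{1_G}} \sigma,
\end{equation*}
where I use the expression from the preceding remark and write $E_\sigma^{1_G} = \{f \in \Hom(C,\C) : \sigma = f\circ\iota_K,\ \tau_{1_G} = f\circ\iota_L\}$ to emphasise the dependence on the chosen base embedding.

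Next I would distribute the action of $g$ through the formal sum and reindex by $\sigma' = g\cdot\sigma$, giving $\phi(\tau) = \sum_{\sigma'} |E_{g^{-1}\sigma'}^{1_G}|\, \sigma'$. The remaining task is to produce a bijection $E_{g^{-1}\sigma'}^{1_G} \to E_{\sigma'}^{\tau}$, where $E_{\sigma'}^{\tau}$ is the set appearing in the statement (with the condition $\tau = f\circ \iota_L$ in place of $\tau_{1_G} = f \circ \iota_L$). By Lemma \ref{lm:compoKL} we may take the compositum to sit inside $\tilde K$, so $G$ acts on $C$ and hence on $\Hom(C,\C)$ compatibly with its action on $\Hom(K,\C)$ and $\Hom(L,\C)$. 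I would then check that $f \mapsto g\cdot f$ sends $E_{g^{-1}\sigma'}^{1_G}$ bijectively onto $E_{\sigma'}^{\tau}$: the condition $f\circ\iota_K = g^{-1}\sigma'$ becomes $(g\cdot f)\circ\iota_K = \sigma'$, and $f\circ\iota_L = \tau_{1_G}$ becomes $(g\cdot f)\circ \iota_L = g\cdot\tau_{1_G} = \tau$, with inverse given by $f' \mapsto g^{-1}\cdot f'$.

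The step I expect to be delicate is bookkeeping the three parallel $G$-actions on $\Hom(K,\C)$, $\Hom(L,\C)$, and $\Hom(C,\C)$, and in particular checking that the identification $C \subset \tilde K$ supplied by Lemma \ref{lm:compoKL} makes these actions compatible with the morphisms $\iota_K, \iota_L$; once that compatibility is in place the bijection $f \mapsto g\cdot f$ is immediate, and the formula in the statement follows by reassembling the reindexed sum.
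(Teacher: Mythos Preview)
Your proposal is correct and follows essentially the same approach as the paper: write $\tau = g\cdot\tau_{1_G}$ by transitivity, use $R[G]$-equivariance of $\phi$, then reindex first over $\Hom(K,\C)$ and then over $\Hom(C,\C)$ via the action of $g$. Your bookkeeping is in fact tidier than the paper's (which handles the reindexing somewhat loosely), and you correctly flag both the implicit typo in the statement (the condition in $E_\sigma$ should read $\tau = f\circ\iota_L$, not $\tau_{1_G} = f\circ\iota_L$) and the need to check compatibility of the $G$-action on $\Hom(C,\C)$ with $\iota_K,\iota_L$, which the paper uses without comment.
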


\begin{proof}
Let $\tau = \gamma \cdot \tau_{\beta}$ with $\gamma \in G$. (We can always write $\tau$ in that form, because $g$ acts transitively on the elements of $\Hom(L, \C)$).

Then, $$\phi(\tau) = \gamma \cdot \phi( \tau_{\beta} ) =  
\sum_{\sigma \in \Hom(K, \C)} 
\sum_{E_\sigma} \gamma \cdot \sigma$$
$$ = 
\sum_{\sigma \in \Hom(K, \C)}
\sum_{E_{\gamma \cdot \sigma}} \gamma \cdot \sigma$$ because $\gamma\colon \Hom(K, \C) \rightarrow \Hom(K, \C)$ is a bijection.

$$ = 
\sum_{\sigma \in \Hom(K, \C)}
\sum_{E_\sigma} \sigma$$ because $\gamma\colon \Hom(C, \C) \rightarrow \Hom(C, \C)$ is a bijection.

\end{proof}

Similarly, for every $R[G]$-module $V$, a compositum $C$ of $K$ and $L$ induces a map from $V^H$ to $V^J$. 
(The proof is similar to that of proposition \ref{pro:action_compo_embedd}.)

In the rest of the article, if $x$ is an element of $V^H$, we will denote by $C \cdot x$ the image of $x$ by this map.

\begin{tm} \label{tm:action_compo}
    Let $x$ be an element of $K^\times$ and let $(C, \iota_K, \iota_L)$ be a compositum of $K$ and $L$. Then $C \cdot x = N_{C/L}(\iota_K(x))$.
\end{tm}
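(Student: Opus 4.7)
The plan is to take $V = \tilde{K}^\times$ viewed as a multiplicative $G$-module, so that $V^H = K^\times$ and $V^J = L^\times$, and then to unwind the definition of $C \cdot x$ through all the identifications that culminate in the action of $\Compos(K,L)$ on $V^H$. By lemma \ref{lm:compoKL} we may assume $C \subset \tilde{K}$, and by proposition \ref{pro:HeckeCompo} the compositum $(C,\iota_K,\iota_L)$ corresponds to a double coset $JgH \in J\backslash G/H$ with $g$ chosen so that $\iota_K(\alpha) = g\cdot\alpha$; by $\Q$-linearity this gives $\iota_K(y) = g(y)$ for all $y \in K$ (identified with its canonical image in $\tilde{K}$).

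First I would unwind the action in the style of the Fact following proposition \ref{prop:Heckemorphisms}. The element $JgH$ corresponds via proposition \ref{pro:Compo_morph_Hecke} to the map $\phi \in \Hom_{R[G]}(R[G/J], R[G/H])$ sending $1\cdot J$ to $\sum_{\gamma H \,:\, J\gamma H = JgH}\gamma H$. Composing with the isomorphism $\Phi_1$ of lemma \ref{lm:fixed_points} in both variables, this translates into the induced map $V^H \to V^J$, which in multiplicative notation sends
\[
x \longmapsto \prod_{\gamma H \in G/H,\ J\gamma H = JgH} \gamma(x).
\]
The set $\{\gamma H : J\gamma H = JgH\}$ is exactly $JgH/H$, and a straightforward coset computation shows that the map $j \mapsto jgH$ induces a bijection $J/(J\cap gHg^{-1}) \to JgH/H$. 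Since the elements of $H$ fix $x$, this gives
\[
C \cdot x = \prod_{j \in J/(J\cap gHg^{-1})} (jg)(x).
\]

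Next I would rewrite $N_{C/L}(\iota_K(x))$ using the fact that in characteristic zero the norm of $y \in C$ down to $L$ is $\prod_\sigma \sigma(y)$, where $\sigma$ ranges over $L$-algebra embeddings $C \hookrightarrow \tilde{K}$. Since $\tilde{K}/\Q$ is Galois, every such $\sigma$ extends to an element of $J = \Gal(\tilde{K}/L)$, and two elements $j_1, j_2 \in J$ restrict to the same embedding of $C$ iff $j_1^{-1}j_2 \in H_C := \Gal(\tilde{K}/C)$. Using $\iota_K(x) = g(x)$, this yields
\[
N_{C/L}(\iota_K(x)) = \prod_{j \in J/H_C} j\bigl(g(x)\bigr).
\]

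The remaining step is to identify $H_C$ with $J \cap gHg^{-1}$, after which the two displayed products agree term-by-term and the theorem follows. Since $C$ is generated over $\Q$ by $\iota_K(K) = g(K)$ and $\iota_L(L) = L$, we have $H_C = \Gal(\tilde{K}/g(K)) \cap \Gal(\tilde{K}/L) = gHg^{-1} \cap J$, exactly as required. I expect the only real subtlety to be keeping the indexing sets straight across the chain of identifications; once the translation $C \cdot x = \prod_{j \in J/(J\cap gHg^{-1})}(jg)(x)$ is established and $H_C$ is computed, matching it with the norm formula is immediate.
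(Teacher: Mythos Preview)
Your proposal is correct and follows essentially the same route as the paper: identify the compositum with a double coset via Proposition~\ref{pro:HeckeCompo}, unwind the Hecke operator to get the product $\prod_{j\in J/(J\cap gHg^{-1})} (jg)(x)$, compute $\Gal(\tilde K/C)=J\cap gHg^{-1}$ from $C=g(K)\cdot L$, and match with the norm formula. If anything, your bookkeeping of which side $H$ and $J$ sit on is cleaner than the paper's own write-up, which swaps their roles in a couple of displayed formulas.
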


\begin{proof}

The bijection described in proposition \ref{pro:HeckeCompo} allows to identify the compositum $(C, \iota_K, \iota_L)$ with an element $J \backslash g / H$ of $J \backslash G / H$.

First, let us prove that the subfield of $\Tilde{K}$ fixed by $H \cap (g J g^{-1}) < G$ is $C$.
The subfield fixed by $g J g^{-1}$ is $g(L) = \iota_L(L)$.
Denote by $\Tilde{C}$ the field fixed by $H \cap (g J g^{-1})$. All elements of $K$ and $\iota_L(L)$ are in $\Tilde{C}$ so $C \subset \Tilde{C}$. What's more, if we denote by $N$ the subgroup of $G$ fixing $C$, then $N$ is included both in $H$ and in $gJg^{-1}$, so it is included in $H \cap gJg^{-1}$. We get $\Tilde{C} \subset C$, so we indeed have $\Tilde{K} ^{H \cap (g J g^{-1})} = C$. 

Now, we know that $C \cdot x = \prod_{\delta \in HgJ/J} \delta x = \prod_{\underset{ HgJ = H\delta J}{\delta \in G/J}} \delta x$ 
We want to make the change of variables $\delta = h g$. For $h, h' \in H$, we have $hgJ = h'gJ$ if and only if there exists $j \in J$ such that $h = h'(gjg^{-1})$, that is to say if and only if $\overline{h} = \overline{h'}$ in $H/(H \cap (g J g^{-1}))$.
This gives $\CC\cdot x = \prod_{h \in H/(H \cap (g J g^{-1}))} h g x$.
Finally, we obtain
$$\CC\cdot x = N_{C/M}(\iota_L(x))$$
as claimed.

\end{proof}

\section{Classical and generalised norm relations} \label{sec:NormRel}

In this section we introduce generalised norm relations, which will be the main type of objects that we will study throughout this article. The notion of classical norm relation has been studied by Biasse, Fieker, Hofmann and Page in \cite{norm}.
They use it to obtain inductive methods to compute the class group or the group of $S$-units of a Galois extension of number fields. The goal of this section is to generalise this notion in order to obtain a similar method applicable to more examples.
We will define our generalisation of norm relations in definition~\ref{def:GNR}, provide equivalent definitions in proposition \ref{pro:equiv_def}, and prove theorem~\ref{tm:criteria_compositum} which will be useful for the algorithms in section~\ref{sec:Algo}.

\begin{df}{}
Let $G$ be a finite group and $H$ a subgroup of $G$.
We call \emph{norm element} of $H$ the element $N_H = \sum_{h \in H} h \in \Z[G]$.
\end{df}

\begin{df}{}
Let $G$ be a finite group, $\J$ a set a subgroups of $G$ and $R$ a commutative ring. A \emph{norm relation over $R$ with respect to $\J$} is an equality in $R[G]$ of the form $$1 = \sum_{i = 1}^l a_i N_{J_i} b_i$$
where $a_i, b_i \in R[G]$, $J_i \in \J$, and $J_i \neq 1$.
\end{df}

\begin{ex}
    The symmetric group $G = S_3$ admits a norm relation over $\Q$ with respect to $\HH = \{ \langle (1, 2, 3) \rangle, \langle (2, 3) \rangle \}$.
\end{ex}

\begin{df}{} \label{def:GNR}
Let $G$ be a finite group, $H$ a subgroup of $G$, $\J$ a set a subgroups of $G$ and $R$ a commutative ring. A \emph{generalised norm relation over $R$ with respect to $H$ and  $\J$} is an equality in $R[G]$ of the form $$N_H = \sum_{i = 1}^l a_i N_{J_i} b_i$$
where $a_i, b_i \in R[G]$, $J_i \in \J$, and $J_i \neq 1$.
\end{df}

\begin{req}
Clearly, with the notations above, a classical norm relation is a generalised norm relation where $H$ is the trivial subgroup.
\end{req}

\begin{ex}
    The alternating group $A_4$ admits a norm relation over~$\Q$ with respect to $\HH = \{ C_2 \times C_2, C_3 \}$.
    It also admits a generalised norm relation with respect to $H = C_2$ and $\J = \{ C_2 \times C_2, C_3 \}$. Here, we see that the generalised norm relation comes from the regular norm relation. 
\end{ex} 

\begin{pro} \label{pro:equiv_def}
Let $G$ be a finite group, $H$ a subgroup of $G$, $\J = \{J_1, \cdots, J_\ell \}$ a set of non trivial subgroups of $G$ and $R$ a commutative ring. Then the following assertions are equivalent:
\begin{enumerate}
    \item\label{item:surjmorph} There exists a surjective morphism of $\Q[G]$-modules $$\phi\colon \bigoplus_{i = 1}^\ell \Q[G/J_i]^{n_i} \rightarrow \Q[G/H]$$
    where for all $i$, $n_i \in \N$.
    \item\label{item:idempotents} If $e_1, \dots, e_r$  are the central primitive idempotent elements of $\Q[G]$, then for all $1 \leq i \leq r$, if $e_i N_H \neq 0$, there exists $J \in \J$ such that $e_i N_J \neq 0$.
    \item\label{item:Qfixedpts} For all simple $\Q[G]$-module $V$, if $V^H \neq 0$, there exists $J \in \J$ such that $V^J \neq 0$.
    \item\label{item:Qbarfixedpts} For all simple $\overline{\Q}[G]$-module $V$, if $V^H \neq 0$, there exists $J \in \J$ such that $V^J \neq 0$.
    \item\label{item:Cfixedpts} For all simple $\C[G]$-module $V$, if $V^H \neq 0$, there exists $J \in \J$ such that $V^J \neq 0$.
    \item\label{item:twosidedideal} The norm element $N_H$ is in the two sided ideal $\langle N_J : J \in \J \rangle_{\Q[G]}$.
    \item\label{item:existnormrel} The group $G$ admits a generalised norm relation over $\Q$ with respect to $H$ and $\J$.
\end{enumerate}
\end{pro}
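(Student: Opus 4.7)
\medskip

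The plan is to prove the seven assertions equivalent by exploiting the semisimplicity of $\Q[G]$ (Maschke's theorem), which makes every module a direct sum of simples and identifies two-sided ideals of $\Q[G]$ with subsets of the Wedderburn factors. I would organize the proof around the following implications: $7 \Leftrightarrow 6$ is essentially a restatement of definitions, since the two-sided ideal $\langle N_J : J \in \J\rangle_{\Q[G]}$ consists precisely of the elements of the form $\sum_i a_i N_{J_i} b_i$. Thus these two can be dispatched at once.

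Next I would handle the block $2 \Leftrightarrow 6$: write $\Q[G] = \prod_i e_i \Q[G]$ as a product of its Wedderburn blocks, and note that every two-sided ideal is a product of some of these blocks. The ideal generated by $\{N_J\}_{J\in\J}$ is therefore the product of those blocks $e_i\Q[G]$ with $e_i N_J \neq 0$ for at least one $J \in \J$; the element $N_H$ lies in this ideal exactly when, for every block where $e_i N_H \neq 0$, the same block is among these, which is assertion~\ref{item:idempotents}. Then $2 \Leftrightarrow 3$ follows from the standard correspondence between central primitive idempotents of $\Q[G]$ and simple $\Q[G]$-modules, combined with the observation that on a simple module $V_i$ of idempotent $e_i$, the element $N_H$ acts as $|H|$ times the projection onto $V_i^H$, so $e_i N_H \neq 0 \Leftrightarrow V_i^H \neq 0$.

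For the chain $3 \Leftrightarrow 4 \Leftrightarrow 5$, the equivalence $4 \Leftrightarrow 5$ is immediate, because every simple $\C[G]$-module is obtained by extension of scalars from a simple $\overline{\Q}[G]$-module (both are described by the same character table), and taking $H$-fixed points commutes with scalar extension. For $3 \Leftrightarrow 4$, I would use the fact that if $V$ is a simple $\Q[G]$-module then $V \otimes_\Q \overline{\Q}$ decomposes as a Galois-orbit of simple $\overline{\Q}[G]$-modules, each appearing with the same multiplicity (the Schur index); since these conjugate modules either all have nonzero $H$-fixed points or none do, the vanishing or non-vanishing of $V^H$ and of $(V \otimes \overline{\Q})^H = V^H \otimes \overline{\Q}$ match across the family.

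Finally, I would prove $1 \Leftrightarrow 3$ using lemma \ref{lm:fixed_points}: for any simple $\Q[G]$-module $V$ one has $\Hom_{\Q[G]}(\Q[G/H], V) = V^H$, so $V$ appears in $\Q[G/H]$ as a summand exactly when $V^H \neq 0$. Since everything is semisimple, a surjection $\bigoplus_i \Q[G/J_i]^{n_i} \twoheadrightarrow \Q[G/H]$ exists iff every simple constituent of $\Q[G/H]$ appears in some $\Q[G/J_i]$, i.e.\ iff $V^H \neq 0 \Rightarrow \exists\, J \in \J,\ V^J \neq 0$. I do not expect any real obstacle beyond keeping the scalar-extension argument in $3 \Leftrightarrow 4$ honest, since this is the only step where rational modules can behave differently than their complexifications and one must invoke Galois descent carefully.
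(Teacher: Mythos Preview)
Your proposal is correct and follows essentially the same approach as the paper: both exploit Maschke's theorem and the Wedderburn decomposition of $\Q[G]$, the identification $V^H \cong \Hom_{\Q[G]}(\Q[G/H],V)$ from lemma~\ref{lm:fixed_points}, and the Galois-conjugacy of the simple constituents of $V\otimes_\Q\overline{\Q}$ for the step $3\Leftrightarrow 4$. The only differences are organizational (you route the ideal condition through $2\Leftrightarrow 6$ rather than $3\Leftrightarrow 6$, and bundle $1\Leftrightarrow 3$ into one step), but the mathematical content is the same.
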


\begin{proof}\hfill

\begin{itemize}
    \item \ref{item:surjmorph} $\Rightarrow$ \ref{item:Qfixedpts}.
    We know there is an isomorphism of $R$-modules between $V^H$ and $\Hom_{\Q[G]}(\Q[G/H], V)$. 
    Likewise, for all $i$, $V^{J_i}$ is isomorphic to $\Hom_{\Q[G]}(\Q[G/J_i], V)$. Suppose \ref{item:surjmorph}, then we have the following diagram, where $f_H$ is an element of $V^H$ seen as an element of  $\Hom_{\Q[G]}(\Q[G/H], V)$, and likewise, the $f_{J_i}$ are elements of $\Hom_{\Q[G]}(\Q[G/J_i], V)$.

    \begin{center}
    \begin{tikzpicture}[->,>=stealth',shorten >=2pt,auto, semithick]
        \node (A) {$\Q[G/H]$};
        \node (B) [below right of=A, xshift = 7cm, yshift = -5.2cm] {$V$};
        \node (C) [below of=A, yshift = -5cm] {$\bigoplus_{i = 1}^r \Q[G/J_i]$};
        \path (A) edge [] node [above right] {$f_H \neq 0 \in V^H$} (B);
        \path (C) edge [] node [left] {$\phi$ surjective} (A);
        \path (C) edge [] node [below] {$f_H \circ \phi =: \sum_{i = 1}^r f_{J_i} \neq 0$} (B);
    \end{tikzpicture}
    \end{center}


    So $\sum_{i = 1}^r f_{J_i}$ is non zero, so at least one of the $f_{J_i}$ is non zero, hence the conclusion.

    \item \ref{item:idempotents} $\Leftrightarrow$ \ref{item:Qfixedpts}.
    Let $V_i$ be the simple $\Q[G]$-module (unique up to isomorphism) such that $e_iV_i \neq 0$ then $\Q[G]/(1-e_i)$ acts faithfully on $V_i$. So  $e_iN_H = 0$ if and only if  $N_H \cdot V_i = 0$, so if and only if $(\frac{1}{|H|}N_H) \cdot V_i = 0$ which is equivalent to  $V_i^H = 0$.

    \item \ref{item:Qfixedpts} $\Rightarrow$ \ref{item:surjmorph}.
    Suppose \ref{item:Qfixedpts}, then let  $V = \Q[G/H]$. Then $V$ is a $\Q[G]$-module, and $V$ can decompose as $V = \bigoplus_k V_k$, where the $V_k$ are simple. For all $k$, let $f_{k} \colon V \rightarrow V_k$ the projection. It can be seen as an element of $V_k^H$ by \ref{lm:fixed_points}. Then there exists a non zero element of $V_k^{J_i}$ for some $i$, by lemma \ref{item:Qfixedpts}. So we have a nonzero morphism $\bigoplus_{i = 1}^r \Q[G/J_i] \rightarrow V_k$ so it is surjective because $V_k$ is simple. Hence the conclusion by putting together all the $k$.
    
    \item \ref{item:Qfixedpts} $\Rightarrow$ \ref{item:Qbarfixedpts}.
    Suppose \ref{item:Qfixedpts}, let $W$ be a simple $\overline{\Q}[G]$-module. $W$ is isomorphic to a submodule of $V \otimes_\Q \overline{\Q}$, with $V$ a simple $\Q[G]$-module. Then we have $V \otimes_\Q \overline{\Q} \simeq \bigoplus_{j = 1}^k W_j$, where the $W_j$ are simple $\overline{\Q}[G]$-modules. So $W$ is isomorphic to one of the $W_j$.
    What's more, the $W_j$ are pairwise Galois conjugate, so $\dim_{\overline{\Q}}W_j^H = \dim_{\overline{\Q}}W_1^H$ pour tout $j$. So if $W^H$ is non zero, $V^H$ is also non zero. So, by~\ref{item:Qfixedpts}, there exists $J \in \J$ such that $V^J$ is non zero. Hence $W^J \neq 0$.

    \item \ref{item:Qbarfixedpts} $\Rightarrow$ \ref{item:Cfixedpts}.
    The simple $\C[G]$-modules are exactly the $V \otimes_{\overline{\Q}} \C$, where~$V$ is a simple $\overline{\Q}[G]$-module. The conclusion follows.

    \item \ref{item:Cfixedpts} $\Rightarrow$ \ref{item:Qbarfixedpts}.
    Suppose \ref{item:Cfixedpts}, let $V$ a simple $\overline{\Q}[G]$-module. If  $V^H \neq 0$, then $(V \times_{\overline{\Q}} \C)^H \neq 0$. So, by $4.$, there exists $J \in \J$ such that $(V \otimes_{\overline{\Q}} \C)^J \neq 0$. Hence $V^J \neq 0$.

    \item \ref{item:Qbarfixedpts} $\Rightarrow$ \ref{item:Qfixedpts}.
    Suppose \ref{item:Qbarfixedpts}, let $V$ a simple $\Q[G]$-module such that $V^H  \neq 0$. Consider $V \otimes_\Q \overline{\Q} \simeq \bigoplus_{j = 1}^k W_j$. We know that $W_j^H  \neq 0$ for all $j$. So there exists  $J \in \J$ such that $W_1^J  \neq 0$. So $V^J  \neq 0$.

    \item \ref{item:Qfixedpts} $\Leftrightarrow$ \ref{item:twosidedideal}.
    Let~$I$ be a two-sided ideal of $\Q[G]$. We have $I = \sum_{i = 1}^r e_i I$. What's more, there is an isomorphic projection of $e_i I$ in a two sided ideal of the algebra $\Q[G]/(1-e_i)$, which is simple. So $e_i I$ is either zero, or $e_i \Q[G]$. By applying this result to $I = \langle N_J : J \in \J \rangle_{\Q[G]}$, we find the equivalence.

    \item \ref{item:twosidedideal} $\Leftrightarrow$ \ref{item:existnormrel}.
    This equivalence comes directly from the definition of a generalised norm relation.
\end{itemize}

\end{proof}

\begin{req}
    It is relatively easy to apply these equivalent definition of generalised norm relations to implement algorithms that take a finite group $G$ and look for $H$ and $\J = \{ J_1, \cdots, J_l\}$ such that there is a generalised norm relation.
\end{req}

\begin{df}
Let $K,L_1, \cdots, L_\ell$ be number fields. Let $\Omega$ a Galois extension of $\Q$ containing~$K$ and all the $L_i$, and let $\G$ its Galois group. We denote by $\HH$ the subgroup of $\G$ fixing $K$, and by~$\Y_i$ the ones fixing the $L_i$ respectively. Then we say there is a generalised norm relation between~$K$ and the $L_i$ if there is a generalised norm relation over $\Q$ with respect to $\HH$ and the~$\Y_i$.
\end{df}

Now we want to use such a generalised norm relation to find an algorithm that can compute the class group of $K$ given the class class groups of all the $L_i$.
So we will only be interested in generalised norm relations where the degrees of the $L_i$ are lower than the degree of $K$. In other words, the order of all the $\Y_i$ has to be higher than the order of $\HH$.

\begin{ex}
    There is a generalised norm relation between the number field $K$ defined by $f(x) = x^6 - 6x^4 + 9x^2 + 23$ and the number fields $L_1, L_2$ respectively defined by $g_1(x) = x^3 - 9x - 27$ and $g_2(x) = x^2 + 207$. Indeed, $K/\Q$ is a Galois extension of Galois group $G = S_3$, and $L_1, L_2$ are the subgroups fixed respectively by $\langle (2, 3) \rangle$ and $\langle (1, 2, 3) \rangle$
\end{ex}

    
\begin{tm} \label{tm:galoisclosure}
Suppose there is a generalised norm relation between a number field $K$ and some $L_i$ that are not necessarily contained in the Galois closure $\Tilde{K}$ of $K$. Denote by $\Omega$ a Galois extension of $\Q$ of Galois group $\G$ containing $\Tilde{K}$ and all the $L_i$ as in the diagram below.

    \begin{center}
    \begin{tikzpicture}[->,>=stealth',shorten >=2pt,auto, semithick]
        \node (A) {$\Omega$};
        \node (B1) [below of = A, yshift = -1 cm, xshift = -1.5cm] {$\tilde{K}$};
        \node (B2) [below of = B1, yshift = -2.5 cm] {$K$};
        \node (B3) [below of = B2, yshift = -2.5 cm] {$F = \Q$};
        \node (C) [right of = B1,yshift = -1.5 cm, xshift = 4 cm] {$L_i$};
        \path (A) edge [-] node [left] {$\NN $} (B1);
        \path (B1) edge [-] node [left] {$H$} (B2);
        \path (B2) edge [-] node [left] {} (B3);
        \path (B1) edge [-][bend right] node [left] {$G$} (B3);
        \path (A) edge [-][bend left] node [above right] {$\Y_i$} (C);
        \path (A) edge [-][bend left] node [left] {$\HH$} (B2);
        \path (A) edge [-][bend left] node [left] {$\G$} (B3);
        \path (C) edge [-][bend left] node [right] {} (B3);
    \end{tikzpicture}
    \end{center} 

Then there is also a generalized norm relation between $K$ and some $M_i$ that are contained in $\tilde{K}$.
\end{tm}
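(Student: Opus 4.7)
My plan is to push the given relation through the canonical quotient $\pi\colon \G \to G = \G/\NN$, where $G = \Gal(\tilde{K}/\Q)$. Since $\NN$ is normal in $\G$ (because $\tilde{K}/\Q$ is Galois), $\pi$ extends to a surjective ring homomorphism $\Q[\G]\to\Q[G]$, which is the tool that will transport the relation.

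The key computation is that $\pi$ behaves cleanly on norm elements: for any subgroup $A$ of $\G$, every fibre of $\pi|_A\colon A \to \pi(A)=A\NN/\NN$ is a coset of $A\cap\NN$ in $A$ and therefore has size $|A\cap\NN|$, so $\pi(N_A) = |A\cap\NN|\,N_{\pi(A)}$. Because $\NN\subset\HH$, this specialises to $\pi(N_{\HH})=|\NN|\,N_H$, and for every $i$ it gives $\pi(N_{\Y_i}) = |\Y_i\cap\NN|\,N_{\pi(\Y_i)}$. Applying $\pi$ to $N_{\HH}=\sum_i a_i N_{\Y_i} b_i$ and dividing through by $|\NN|$ produces
$$N_H = \sum_{i=1}^\ell \frac{|\Y_i\cap\NN|}{|\NN|}\,\pi(a_i)\,N_{J_i}\,\pi(b_i)$$
in $\Q[G]$, where $J_i := \pi(\Y_i) = \Y_i\NN/\NN$. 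By the Galois correspondence the fixed field $M_i := \tilde{K}^{J_i}$ equals $L_i\cap\tilde{K}$, giving the desired subfields of $\tilde{K}$.

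It remains to check that each $J_i$ is non-trivial, equivalently that $\Y_i\not\subset\NN$, i.e.\ $L_i\not\supset\tilde{K}$. Under the standing assumption, discussed in the paragraph preceding the theorem, that the original relation is useful, i.e.\ every $L_i$ is strictly smaller than $K$ and hence $|\Y_i|>|\HH|\geq|\NN|$, this is automatic. If a degenerate index $i$ with $\Y_i\subset\NN$ did appear, its contribution in the displayed formula would be a norm-free term $c_i\pi(a_i)\pi(b_i)\in\Q[G]$, which one can absorb into a single extra $N_H$-term using the identity $N_H^2=|H|N_H$ (valid whenever $H\neq 1$, i.e.\ $K$ is not Galois, at the cost of adding $K$ itself to the list of $M_i$).

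The step I expect to be the main obstacle is precisely the norm-compatibility $\pi(N_A)=|A\cap\NN|\,N_{\pi(A)}$: although it is a short counting identity, it hinges on two facts — that $\NN$ is normal in $\G$, so that $\pi$ is a ring homomorphism and the fibres of $\pi|_A$ all have the same size, and that $\NN\subset\HH$, so that a common factor $|\NN|$ on the left can be divided out. Both are consequences of $\tilde{K}$ being the Galois closure of $K$ inside $\Omega$, so once the projection is set up correctly the rest of the argument is essentially bookkeeping.
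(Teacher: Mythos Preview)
Your proof is correct and follows essentially the same route as the paper: push the relation $N_{\HH}=\sum_i a_i N_{\Y_i} b_i$ through the ring homomorphism $\pi\colon\Q[\G]\to\Q[G]$, use the fibre-counting identity $\pi(N_A)=|A\cap\NN|\,N_{\pi(A)}$, and set $M_i=\tilde{K}^{\pi(\Y_i)}$. The paper handles the degenerate case $\Y_i\subset\NN$ more tersely, simply remarking that then $\tilde{K}\subset L_i$ so the original relation ``was not interesting''; your more explicit treatment of that edge case (and of why $\NN\lhd\G$ and $\NN\subset\HH$) is a welcome addition but does not change the argument.
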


\begin{proof}

We have $\NN = \bigcap_{g\in \G} g \HH g^{-1}$. What's more, $\NN$ is normal in $\G$ and $G = \G/ \NN$ and $H = \HH/\NN$.
Since there is a generalised norm relation between the $L_i$ and $K$, there exists a relation of the form $N_{\HH} = \sum_i a_i N_{\Y_i} b_i \in \Q[G]$.
Consider the projection $$\pi\colon \Q[\G] \rightarrow \Q[\G/\NN] = \Q[G], \sum_i \lambda_i g_i \mapsto \sum_i \lambda_i \overline{g}_i.$$
This map $\pi$ is a surjective morphism of $\Q$-algebras. Composing the relation by  $\pi$ we get

$$\pi(N_{\HH}) = |\NN| N_H = \sum_i \pi(a_i) \pi(N_{\Y_i}) \pi(b_i)$$

and

$$\pi(N_{\Y_i}) = |\NN \cap \Y_i| N_{\Y_i/(\NN \cap \Y_i )}.$$

So there is a generalised norm relation between $K$ and the $M_i = \Omega^{\Y_i/ \NN} \in \tilde{K}$.
Note that if for some $i$, $\Y_i \subset \NN$, then~$\tilde{K} \subset L_i$, then the relation was not interesting.

\end{proof}

The following definition and properties, up to theorem \ref{tm:bornec} aim to provide a bound on a quantity that we call optimal coefficient, which will be useful for the proof of the complexity of algorithm \ref{algo:polynomial} in section~\ref{sec:Algo}.

\begin{df}{} \label{df: optcoeff}
Let $H, J_1, \cdots, J_\ell$ be  non trivial subgroups of $G$, and let $\J = \{J_1, \cdots, J_\ell\}$. If there is a norm relation over $\Q$ with respect to $H$ and $\J$, we define the \emph{optimal coefficient} $c(\J, H)$ to be the smallest positive integer such that there exists an injective morphism of $\Z[G]$-module $\psi\colon \Z[G/H] \rightarrow \bigoplus_{i} \Z[G/J_i]^{n_i}$ with $n_i \in \N$ for all $i$, and a morphism of $\Z[G]$-module $\phi\colon \bigoplus_{i} \Z[G/J_i]^{n_i} \rightarrow \Z[G/H]$ such that the image of $\phi$ has finite index in $\Z[G/H]$ and  $\phi \circ \psi = c(\J, H) \cdot \id$.
\end{df}

To prove that the optimal coefficient is well defined, we start by giving a more general proposition.

\begin{pro} \label{pro:pseudo_inv}
    Let $\Gamma$ be a finite group, and let $\HH_1, \cdots, \HH_r, \Y_1, \cdots, \Y_s$ be some subgroups of $\Gamma$. Let $M = \bigoplus_i \Q[\Gamma/\HH_i]$ and $N = \bigoplus_j \Q[\Gamma/\Y_j]$.
    \begin{enumerate}
        \item \label{surj} If there exists a surjective morphism of $\Q[\Gamma]$-modules $$\Phi \colon M \rightarrow N,$$ then there is an injective morphism of $\Q[\Gamma]$-modules $$\Psi \colon  N \rightarrow M$$ such that $\Psi \circ \Phi = \id_{N \rightarrow N}$.
        \item \label{inj} Similarly, If there exists an injective morphism of $\Q[\Gamma]$-modules $$\Psi \colon N \rightarrow M,$$ then there is a surjective morphism of $\Q[\Gamma]$-modules $$\Phi \colon M \rightarrow N$$ such that $\Psi \circ \Phi = \id_N$.
    \end{enumerate}
\end{pro}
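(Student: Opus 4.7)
The plan is to exploit the semisimplicity of $\Q[\Gamma]$. Since $\Gamma$ is finite and $\Q$ has characteristic zero, Maschke's theorem guarantees that $\Q[\Gamma]$ is a semisimple algebra, so every short exact sequence of $\Q[\Gamma]$-modules splits. Both $M$ and $N$ are finite-dimensional $\Q[\Gamma]$-modules (being finite direct sums of permutation modules $\Q[\Gamma/\HH_i]$ and $\Q[\Gamma/\Y_j]$), and the conclusion will follow by choosing explicit $\Q[\Gamma]$-linear splittings and retractions.

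For part \ref{surj}, given a surjective $\Q[\Gamma]$-linear map $\Phi \colon M \rightarrow N$, I would apply Maschke's theorem to the submodule $\ker(\Phi) \subset M$, obtaining a $\Q[\Gamma]$-stable complement $M'$ so that $M = \ker(\Phi) \oplus M'$. The restriction $\Phi|_{M'} \colon M' \to N$ is then an isomorphism of $\Q[\Gamma]$-modules, and I define $\Psi$ as the composition of its inverse $N \xrightarrow{\sim} M'$ with the inclusion $M' \hookrightarrow M$. This map is visibly injective and satisfies $\Phi \circ \Psi = \id_N$. Part \ref{inj} is dual: starting from an injective $\Psi \colon N \hookrightarrow M$, semisimplicity applied to the submodule $\Psi(N)$ yields $M = \Psi(N) \oplus M''$, and I define $\Phi$ to be the projection onto $\Psi(N)$ followed by $\Psi^{-1}$; this $\Phi$ is surjective and satisfies $\Phi \circ \Psi = \id_N$.

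There is essentially no hard step here; the proposition is a straightforward consequence of the fact that $\Q[\Gamma]$-modules form a semisimple category. The only thing worth verifying explicitly is that one is entitled to invoke Maschke in this form for finitely generated $\Q[\Gamma]$-modules, and that the complements chosen are indeed $\Q[\Gamma]$-stable (not merely $\Q$-linear), which is automatic from the statement of Maschke's theorem. If anything deserves flagging as a minor subtlety, it is that the statement as printed contains a small typographical inversion of the composition (the identity must be read as $\Phi\circ\Psi = \id_N$, since $\Psi\circ\Phi$ lands in $M$), after which the argument above applies verbatim.
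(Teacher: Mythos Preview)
Your proof is correct and follows essentially the same approach as the paper: both arguments rest on the semisimplicity of $\Q[\Gamma]$ to produce a splitting. The paper phrases it by decomposing $M$ and $N$ into simple modules so that $\Phi$ becomes a projection and $\Psi$ the natural inclusion, while you work directly with a $\Q[\Gamma]$-stable complement of $\ker(\Phi)$ (resp.\ of $\Psi(N)$); these are two ways of saying the same thing. Your observation about the typographical inversion in the composition is also on target: the intended identity is $\Phi\circ\Psi = \id_N$, as your argument (and the paper's own proof) makes clear.
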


\begin{proof}

Let us prove \ref{surj}. 
Since $\Q[\Gamma]$ is a semi-simple algebra, this means we can write the decomposition in simple modules. Up to isomorphism, $N = \bigoplus_{j = 1}^n W_j$ and $M = \bigoplus_{j = 1}^n W_j \oplus \bigoplus_{k = 1}^m V_k$, and $\Phi$ is the projection. Then $\Psi$ is the natural injection $\bigoplus_{j = 1}^n W_j \rightarrow \bigoplus_{j = 1}^n W_j \oplus \bigoplus_{k = 1}^m V_k$.

The proof of \ref{inj} is similar.

\end{proof}

\begin{pro} \label{pro:optimal_coef}
With the notations of the definition above, $c(\J, H)$ is well defined.
\end{pro}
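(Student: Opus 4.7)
The plan is to exhibit at least one positive integer for which the conditions of Definition \ref{df: optcoeff} are satisfied; then the existence of $c(\J, H)$ follows since every non-empty subset of $\N_{>0}$ has a least element.

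First, by hypothesis there is a generalised norm relation over $\Q$ with respect to $H$ and $\J$. By the equivalence \ref{item:existnormrel} $\Leftrightarrow$ \ref{item:surjmorph} of Proposition \ref{pro:equiv_def}, there exist integers $n_i \in \N$ and a surjective morphism of $\Q[G]$-modules
\[
\Phi \colon \bigoplus_i \Q[G/J_i]^{n_i} \longrightarrow \Q[G/H].
\]
Applying Proposition \ref{pro:pseudo_inv}.\ref{surj} with $\Gamma = G$, we obtain an injective morphism of $\Q[G]$-modules
\[
\Psi \colon \Q[G/H] \longrightarrow \bigoplus_i \Q[G/J_i]^{n_i}
\]
such that $\Phi \circ \Psi = \id_{\Q[G/H]}$.

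The next step is to clear denominators to pass from $\Q$-module morphisms to $\Z$-module morphisms. Since $\Z[G/H]$ is a finitely generated $\Z$-module, there exists a positive integer $d$ such that $\psi := d\,\Psi$ maps $\Z[G/H]$ into $\bigoplus_i \Z[G/J_i]^{n_i}$. Likewise, since $\bigoplus_i \Z[G/J_i]^{n_i}$ is finitely generated, there exists a positive integer $d'$ such that $\phi := d'\,\Phi$ restricts to a $\Z[G]$-module morphism $\bigoplus_i \Z[G/J_i]^{n_i} \to \Z[G/H]$. The map $\psi$ is injective because it is a non-zero scalar multiple of the injective $\Q$-linear map $\Psi$ between torsion-free modules, and the image of $\phi$ spans $\Q[G/H]$ over $\Q$ (since $\Phi$ is surjective), so it is of finite index in $\Z[G/H]$.

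Finally, we compute
\[
\phi \circ \psi = (d'\,\Phi) \circ (d\,\Psi) = dd' \cdot (\Phi \circ \Psi) = dd' \cdot \id_{\Z[G/H]}.
\]
Hence $dd'$ is a positive integer satisfying the conditions of Definition \ref{df: optcoeff}, so the set of such positive integers is non-empty and therefore admits a smallest element, which is $c(\J, H)$. There is no serious obstacle here; the only delicate point is checking that clearing denominators preserves injectivity of $\psi$ and the finite-index property of the image of $\phi$, both of which follow immediately from the fact that the $\Z[G/H]$ and $\bigoplus_i \Z[G/J_i]^{n_i}$ are $\Z$-lattices inside their rational spans.
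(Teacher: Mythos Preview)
Your proof is correct and follows essentially the same route as the paper: obtain $\Phi$ from the norm relation via Proposition~\ref{pro:equiv_def}, get the section $\Psi$ from Proposition~\ref{pro:pseudo_inv}, clear denominators, and verify the resulting integer satisfies the definition. If anything, your version is a bit more explicit in checking that $\psi$ stays injective and that the image of $\phi$ has finite index, and in computing $\phi\circ\psi = dd'\cdot\id$ directly.
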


\begin{proof}
Since there is  a norm relation over $\Q$ with respect to $H$ and~$\J$, there is a surjective $\Q[G]$-module morphism $ \bigoplus_{i} \Q[G/J_i]^{n_i} \rightarrow \Q[G/H]$.

Consider $\Psi$ the injection given by proposition \ref{pro:pseudo_inv}, let $c$ be the LCM of the denominators of all coefficients of all the $\Psi (gH)$ for $gH \in G/H$.
Then $c \cdot \Psi$ induces an injective morphism of $\Z[G]$-module $\Z[G/H] \rightarrow \bigoplus_i \Z[G/J_i]^{n_i}$.
With the same reasoning, we can construct a morphism of $\Z[G]$-modules
$\Phi\colon \bigoplus_i \Z[G/J_i]^{n_i} \rightarrow \Z[G/H]$ whose image is of finite index in $\Z[G/H]$. And then $\Psi \circ \Phi$ is a multiple of $\id_{\Z[G/H]}$. Hence the conclusion.

\end{proof}

We now prove that the optimal coefficient is also smallest for the divisibility relation.

\begin{pro}
If $c$ is a positive integer such that there exists $\phi$ and $\psi$ as in definition \ref{df: optcoeff} such that $\phi \circ \psi = c \cdot \id_{\Z[G/H]}$, then $c(\J, H) \mid c$.
\end{pro}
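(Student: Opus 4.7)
The plan is to show that the set $S$ of positive integers $c$ admitting a pair $(\psi,\phi)$ as in Definition~\ref{df: optcoeff} with $\phi\circ\psi = c\cdot\id_{\Z[G/H]}$ is closed under pairwise greatest common divisors. Combined with the fact that $c(\J,H)$ is the minimum of $S$ as a positive integer, this forces $c(\J,H)\mid c$ for every $c\in S$.

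To prove closure under gcds, fix $c_0 := c(\J,H)$ with associated maps $\psi_0\colon \Z[G/H]\to M_0$ and $\phi_0\colon M_0\to \Z[G/H]$, and let $c\in S$ with corresponding maps $\psi,\phi$ and ambient module $M$. Using Bezout, write $d = \gcd(c_0,c) = uc_0 + vc$ with $u,v\in\Z$; by replacing $(u,v)$ with $(u+kc/d,\, v-kc_0/d)$ for a suitable integer $k$, we may assume $u\neq 0$ and $v\neq 0$ (the case $c=c_0$ being trivial). Form the module $M_0\oplus M$, which is still of the form $\bigoplus_i \Z[G/J_i]^{m_i}$ required by the definition, and set
\[
\Psi(x) := (\psi_0(x),\psi(x)), \qquad \Phi(y,z) := u\phi_0(y) + v\phi(z).
\]
Then $\Psi$ is $\Z[G]$-linear and injective since $\psi_0$ is, and a direct computation gives $\Phi\circ\Psi(x) = uc_0\,x + vc\,x = d\,x$.

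The main obstacle is verifying that the image of $\Phi$ has finite index in $\Z[G/H]$. For this I would use that the images of $\phi_0$ and $\phi$ already have finite indices $n_0$ and $n$ respectively, so $n_0\Z[G/H]\subseteq \phi_0(M_0)$ and $n\Z[G/H]\subseteq \phi(M)$. These inclusions yield $un_0\Z[G/H]\subseteq \Phi(M_0\oplus M)$ and $vn\Z[G/H]\subseteq \Phi(M_0\oplus M)$, so $\gcd(un_0,vn)\Z[G/H]\subseteq \Phi(M_0\oplus M)$; the assumption $u,v\neq 0$ ensures this ideal is nonzero, so the image indeed has finite index.

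Having shown $d\in S$, minimality of $c_0$ gives $c_0\leq d$, while $d\mid c_0$ by construction forces $d=c_0$, and therefore $c_0\mid c$, as claimed.
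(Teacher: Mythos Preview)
Your proof is correct and follows essentially the same idea as the paper's: both arguments amount to showing that the set of admissible integers $c$ is a subgroup of $\Z$ (the paper phrases this abstractly by intersecting the span of all compositions $t_2\circ t_1$ with $\Z\cdot\id$, while you establish closure under $\gcd$ explicitly via B\'ezout and direct sums). One simplification is available: your separate verification that $\Phi$ has finite-index image, and the accompanying manoeuvre to ensure $u,v\neq 0$, are unnecessary, since $\Phi\circ\Psi = d\cdot\id$ with $d>0$ already forces $d\,\Z[G/H]\subseteq\operatorname{Im}(\Phi)$.
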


\begin{proof}

Consider the group $$ E =  \langle t_2 \circ t_1 |  n_i \in \N^\times \forall i, t_1 \in A_{1, n_i}, t_2 \in A_{2, n_i} \rangle_\Z \cap \Z \id_{\Z[G/H]}, $$

where $$A_{1, n_i} = \Hom_{\Z[G]}(\Z[G/H], \bigoplus_i \Z[G/J_i]^{n_i})$$
and $$A_{2, n_i} = \Hom_{\Z[G]}( \bigoplus_i \Z[G/J_i]^{n_i}, \Z[G/H]) .$$

Then $E$ is a subgroup of $\End_{\Z[G]}(\Z[G/H] )$ contained in $\Z \id$, so $E$ is of the form $a \Z \id$ with $a \in \N$. And by definition, $a = c(\J, H)$.
By construction, $c \cdot \id$ is in $E$, hence $c(\J, H) \mid c$.

\end{proof}


\begin{tm} \label{tm:bornec}
With the notations of Definition~\ref{df: optcoeff}, we have $c(\J, H) \mid |G|^2$.
\end{tm}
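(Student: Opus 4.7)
The plan is to exhibit explicit $\Z[G]$-module morphisms $\psi\colon \Z[G/H] \to \bigoplus_i \Z[G/J_i]^{n_i}$ and $\phi\colon \bigoplus_i \Z[G/J_i]^{n_i} \to \Z[G/H]$ whose composition equals $c \cdot \id_{\Z[G/H]}$ for some positive integer $c$ dividing $|G|^2$, by integralising a rational splitting of the norm relation.

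First, I would translate the generalised norm relation via Proposition \ref{pro:equiv_def} into a surjective $\Q[G]$-morphism $\Phi_\Q\colon \bigoplus_i \Q[G/J_i]^{n_i} \twoheadrightarrow \Q[G/H]$. Since $\Q[G]$ is semisimple, Proposition \ref{pro:pseudo_inv} then provides a section $\Psi_\Q$ with $\Phi_\Q \circ \Psi_\Q = \id_{\Q[G/H]}$. By the Hecke algebra isomorphism of Proposition \ref{prop:Heckemorphisms} (which holds over both $\Q$ and $\Z$, since the lemmas it rests on are purely combinatorial), any rational $\Q[G]$-morphism between these permutation modules equals $1/d$ times an integer-valued $\Z[G]$-morphism for some positive integer $d$. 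Scaling yields integer-valued maps $\phi := d_1 \Phi_\Q$ (with finite-index image) and $\psi := d_2 \Psi_\Q$ (injective) satisfying $\phi \circ \psi = d_1 d_2 \cdot \id_{\Z[G/H]}$, whence $c(\J, H) \mid d_1 d_2$.

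The hard part will be showing that the rational sections $\Phi_\Q, \Psi_\Q$ can be chosen so that the scaling factors $d_1$ and $d_2$ each divide $|G|$, giving the bound $c(\J, H) \mid |G|^2$. The key input is that the central primitive idempotents of $\Q[G]$ have the form $e_\chi = \tfrac{\chi(1)}{|G|} \sum_{g \in G} \chi(g^{-1}) g$, so their coefficients have denominators dividing $|G|$; this controls the denominators arising in the isotypic decomposition of $\Q[G]$-modules. Decomposing $\Phi_\Q$ and $\Psi_\Q$ along the isotypic components of $\Q[G]$ and carefully selecting rational sections on each component, one uses the denominator control on the idempotents together with the Hecke algebra structure to guarantee that $d_1, d_2 \mid |G|$, completing the argument.
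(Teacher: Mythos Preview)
Your overall strategy is right, and the first two paragraphs are fine: clear a rational section, scale to integrality, and then $c(\J,H)$ divides the product of the two scaling factors. The gap is entirely in the last paragraph, which you correctly flag as the hard part but do not actually carry out.

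Knowing that the central primitive idempotents $e_j$ of $\Q[G]$ lie in $\tfrac{1}{|G|}\Z[G]$ only controls the denominators of the \emph{isotypic projections}. It does not, by itself, bound the denominators of a section $\Psi_\Q$ \emph{within} each isotypic component. On the component $e_j\Q[G]\cong M_{n_j}(D_j)$ you still need to produce, say, a right inverse of a surjection $D_j^{n_j b}\twoheadrightarrow D_j^{n_j a}$ that is integral with respect to the lattices $e_j\Z[G/J_i]$ and $e_j\Z[G/H]$. Nothing in your sketch explains why such a section has denominator dividing~$|G|$; the Hecke algebra description tells you sections are $\Q$-linear combinations of double cosets, but gives no a priori denominator bound. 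As written, ``carefully selecting rational sections on each component'' is precisely the statement to be proved.

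The paper fills this gap by a genuinely different mechanism. It works locally at each prime~$p$ and passes to a maximal order $\O$ with $\Z_p[G]\subset\O\subset\tfrac{1}{|G|}\Z_p[G]$. Over $\O$, the lattices $\O\cdot\Z_p[G/H]$ and $\bigoplus_i\O\cdot\Z_p[G/J_i]^{n_i}$ are, component by component, free over the maximal order $\Lambda$ of each division algebra (this is the structure theory of lattices over maximal orders). Hence an \emph{exact} $\O$-linear section and retraction exist, with $\tilde\phi\circ\tilde\psi=\id$. The two factors of $|G|$ then come from the containment $\O\subset\tfrac{1}{|G|}\Z_p[G]$, once for $\tilde\psi$ and once for $\tilde\phi$, when pushing these $\O$-maps back to maps between the $\Z_p[G]$-lattices. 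That structural freeness over $\Lambda$ is exactly the missing ingredient that your idempotent argument does not supply.
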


\begin{proof}

Let $p$ be a prime number. Let $\O$ be a maximal order of $\Q_p[G]$ containing $\Z_p[G]$. By \cite[27.1, proposition]{rep_th} we have $ \O \subset \frac{1}{|G|} \Z_p[G]$.

Consider $M_H = \O \cdot \Z_p[G/H] \subset \Q_p[G/H]$. Then $M_H$ is an $\O$-module, and we have $\Z_p[G/H] \subset M_H \subset \frac{1}{|G|} \Z_p[G]$.
Similarly, for all $i$, we write $M_{J_i} = \O \cdot \Z_p[G/J_i]$.

Let $e_1, \cdots, e_r$ be  central primitive idempotents of $\Q_p[G]$ contained in $\O$, which exist since $\O$ is a maximal order.
For all $1 \leq i \leq r$, there is a isomorphism $\alpha\colon \O/(1-e_i) \rightarrow M_n(\Lambda)$, where $\Lambda = \Lambda_i$ is the maximal order of
a division algebra $D$ over $\Q_p$.
And $\alpha$ can be extended to $\O$ with the projection $\O \rightarrow \O/(1 - e_i)$. (see \cite{max_orders}).

We have $M_n(\Lambda) \subset M_n(D)$ and $M_n(D)$ acts on $D^n$, which is the only simple $M_n(D)$-module up to isomorphism.

So $M_H \otimes \Q_p \cong D^{na}$ with $a \in \N^\times$, since $M_H \otimes \Q_p$ is a is a $M_n(D)$-module. Similarly, $\bigoplus_i M_{J_i}^{n_i} \otimes \Q_p \cong D^{nb}$, and thus $M_H \cong \Lambda^{na}$ and $\bigoplus_i M_{J_i}^{n_i} \cong \Lambda^{nb}$.

What's more, we have an surjective  morphism of $\Q_p[G]$-modules from $\bigoplus_i M_{J_i}^{n_i} \otimes \Q_p = \bigoplus_i \Q_p[G/J_i]$ to $M_H \otimes \Q_p = \Q_p[G/H]$. Which means that $a \leq b$.

Therefore, we have a natural injection of $\O$-modules $\Tilde{\psi} \colon M_H \rightarrow \bigoplus_i M_{J_i}^{n_i}$, and a natural surjection $\Tilde{\phi} \colon \bigoplus_i M_{J_i}^{n_i} \rightarrow M_H$.

Let us denote $\psi = |G| \Tilde{\psi}$ and $\phi = |G| \Tilde{\phi}$.
That way, $\psi$ induces an injective morphism $\Z_p[G/H] \rightarrow \bigoplus_i \Z_p[G/J_i]^{n_i}$ and $\phi$ a surjective morphism $\bigoplus_i \Z_p[G/J_i]^{n_i} \rightarrow \Z_p[G/H]$. And we have $\phi \circ \psi = |G|^2 \id$.

By doing the same reasoning over all $e_i$ and by putting together every prime $p$, we obtain the claimed result.

\end{proof}

\begin{req}
    In the algorithms of section \ref{sec:Algo}, the optimal coefficient $c(\J, H)$ plays a role analogous to that of the optimal denominator $d(\HH)$ in the case of classical norm relations (see \cite[definition 2.15]{norm}). Thanks to our new definition, we obtain a $|G|^2$ bound instead of the $|G|^3$ bound in \cite[theorem 2.20]{norm}.
\end{req}

Using the isomorphisms in section \ref{sec:HeckeCompo}, we want to find an equivalent definition of generalised norm relations that features only field theory and not group theory. This definition will be useful to design efficient algorithms that will not require the computation of Galois groups or Galois closures.

\begin{lm}\label{lm:surj}
Let $V$ be a $R[G]$-module and $\phi\colon V \rightarrow R[G/H]$ a surjective morphism of $R[G]$-modules. There exists a preimage of $1H$ by $\phi$ that is in $V^H$.
\end{lm}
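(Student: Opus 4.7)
The plan is to build an $H$-invariant preimage of $1 \cdot H$ by an averaging argument. Surjectivity of $\phi$ provides some $w \in V$ with $\phi(w) = 1 \cdot H$, and the natural modification that restores $H$-invariance is
$$v := \frac{1}{|H|} \sum_{h \in H} h \cdot w,$$
which makes sense whenever $|H|$ is invertible in $R$ (the implicit setting of the paper, whose applications all take place over $\Q$). A change of summation variable $h \mapsto h'h$ gives $h'v = v$ for every $h' \in H$, so $v \in V^H$; and since $h \cdot (1\cdot H) = 1\cdot H$ for each $h \in H$, one computes
$$\phi(v) = \frac{1}{|H|} \sum_{h \in H} h \cdot \phi(w) = \frac{1}{|H|} \sum_{h \in H} 1 \cdot H = 1 \cdot H.$$

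A more conceptual reformulation, which I would keep in mind while writing, goes through lemma~\ref{lm:fixed_points}: an $H$-invariant preimage of $1 \cdot H$ is exactly the datum of an $R[G]$-module section of $\phi$. Such a section exists because the element $e := \frac{1}{|H|} N_H \in R[H]$ is an idempotent realising the trivial $R[H]$-module $R$ as a direct summand of $R[H]$, so $R[G/H] \cong R[G] \otimes_{R[H]} R$ is a direct summand of the free $R[G]$-module $R[G]$, hence projective, and $\phi$ splits.

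The only real obstacle is the need for $|H|$ to be invertible in $R$: both approaches collapse without it, and the statement genuinely fails otherwise (for instance, with $R = \Z$, $G = H$ of order~$2$, the coefficient-sum map $R[G] \to R$ sends $V^H = R(1+g)$ onto $2\Z$, missing $1$). I would therefore state the proof under the hypothesis $|H| \in R^\times$, note that this is satisfied in every later invocation of the lemma in the paper, and give the two-line averaging computation above as the proof.
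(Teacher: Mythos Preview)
Your proposal is correct and follows exactly the paper's own proof: pick any preimage $w$ by surjectivity, average over $H$ to get $v=\frac{1}{|H|}\sum_{h\in H}h\cdot w$, and check $v\in V^H$ with $\phi(v)=1H$. Your additional remarks on the necessity of $|H|\in R^\times$ (with the $\Z$-counterexample) and the projectivity reformulation via lemma~\ref{lm:fixed_points} are nice extras that the paper omits, but they do not change the approach.
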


\begin{proof}

Since $\phi$ is surjective, there exists $v \in V$ such that $\phi(v) = 1H$. Now consider the element $v' = \frac{1}{|H|} \sum_{h \in H} h \cdot v$.

Then, clearly, $v' \in V^H$, and $\phi(v') = \frac{1}{|H|} \sum_{h \in H} \phi(h \cdot v) =  \frac{1}{|H|} \sum_{h \in H} h \cdot \phi(v) = \frac{1}{|H|} \sum_{h \in H} h \cdot 1H = 1H$.

\end{proof}

\begin{pro} \label{pro:CritHecke}

We have a generalised norm relation, given by a surjection $$\phi\colon \bigoplus_i R[G/J_i] \rightarrow R[G/H],$$  if and only if there is a relation of the form $$1H = \sum_i T_{\sum_h \mu_{i,h} J_i \delta_{i,h} H} \sum_k \lambda_{i,k} g_{i,k} J_i,$$ with $\sum_k \lambda_{i,k} g_{i,k} J_i $ in $(\bigoplus_i R[G/J_i])^H$.
\end{pro}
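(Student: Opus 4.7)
The plan is to translate both directions through the dictionary provided by Proposition \ref{prop:Heckemorphisms} (applied with the roles of $H$ and $J$ exchanged), which identifies $\Hom_{R[G]}(R[G/J_i], R[G/H])$ with the Hecke module $R[J_i \backslash G / H]$, together with Lemma \ref{lm:surj} which supplies $H$-fixed preimages of $1H$. Everything else is essentially unpacking the definition of the Hecke operator via the diagrams in the ``Fact'' following Proposition \ref{prop:Heckemorphisms}.

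For the forward implication, I would start from a surjection $\phi$ and use Lemma \ref{lm:surj} to pick a preimage $v = (v_1, \ldots, v_\ell)$ of $1H$ lying in $\bigl(\bigoplus_i R[G/J_i]\bigr)^H$; each component $v_i = \sum_k \lambda_{i,k} g_{i,k} J_i$ is then $H$-fixed in $R[G/J_i]$. Writing $\phi_i\colon R[G/J_i] \to R[G/H]$ for the $i$-th component of $\phi$, the symmetric form of Proposition \ref{prop:Heckemorphisms} produces a unique element $\sum_h \mu_{i,h} J_i \delta_{i,h} H \in R[J_i \backslash G / H]$ encoding $\phi_i$. Unpacking the definition of the associated Hecke operator applied to an $H$-fixed vector then gives $\phi_i(v_i) = T_{\sum_h \mu_{i,h} J_i \delta_{i,h} H} v_i$, and summing over $i$ yields the identity $1H = \phi(v) = \sum_i T_{\sum_h \mu_{i,h} J_i \delta_{i,h} H} v_i$.

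For the converse, I would reverse this construction. Given the relation, each element $\sum_h \mu_{i,h} J_i \delta_{i,h} H \in R[J_i \backslash G / H]$ corresponds via Proposition \ref{prop:Heckemorphisms} to an $R[G]$-module morphism $\phi_i\colon R[G/J_i] \to R[G/H]$ whose value at $v_i = \sum_k \lambda_{i,k} g_{i,k} J_i$ reproduces the $i$-th term of the hypothesized relation. Assembling the $\phi_i$ into $\phi\colon \bigoplus_i R[G/J_i] \to R[G/H]$ yields an $R[G]$-linear map with $\phi(v) = 1H$. Surjectivity is automatic: the image of $\phi$ is an $R[G]$-submodule of $R[G/H]$ containing $1H$, and the $R[G]$-orbit of $1H$ hits every coset basis element $gH$, so the image equals $R[G/H]$.

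The substantive content is already encoded in the earlier structural results, so the main difficulty is notational bookkeeping. The point that requires the most care is to verify that the Hecke operator $T_{\sum_h \mu_{i,h} J_i \delta_{i,h} H}$ attached to an element of $R[J_i \backslash G / H]$ — viewed as an $R[G]$-module morphism $R[G/J_i] \to R[G/H]$ via the symmetric version of Proposition \ref{prop:Heckemorphisms} — evaluates on the $H$-fixed vector $v_i$ by exactly the sum-over-double-coset formula given by the ``Fact'', so that the relation in the statement matches $\phi(v) = 1H$ term by term.
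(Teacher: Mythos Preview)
Your proof is correct and follows essentially the same route as the paper: pick an $H$-fixed preimage of $1H$ via Lemma~\ref{lm:surj}, decompose $\phi$ into its components $\phi_i$, and rewrite each $\phi_i$ as a Hecke operator using Proposition~\ref{prop:Heckemorphisms}. The paper's proof only spells out the forward implication, so your explicit treatment of the converse (reconstructing $\phi$ from the Hecke data and observing that $1H$ in the image forces surjectivity) is a welcome addition rather than a deviation.
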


\begin{proof}

Suppose there exists $\phi\colon \bigoplus_i R[G/J_i] \rightarrow R[G/H]$ surjective. Let us considere $\bigoplus_i \sum_k \lambda_{i,k} g_{i,k} J_i$ a preimage of $1H$. By lemma \ref{lm:surj}, we can suppose $\bigoplus_i \sum_k \lambda_{i,k} g_{i,k} J_i$ is in $(\bigoplus_i R[G/J_i])^H$ 

Let us write $\phi = \bigoplus_i \phi_i$ with $\phi_i\colon \Q[G/J_i] \rightarrow \Q[G/H]$. Then we have $$1H = \sum_i \phi_i (\sum_k \lambda_{i,k} g_{i,k} J_i ).$$
Then, by writing  $\phi_i = \sum_h \mu_h T_{J_i \delta_{i, h} H} = T_{\sum_h \mu_{i,h} J_i \delta_{i,h} H}$, we can obtain 
$$1H = \sum_i T_{\sum_h \mu_{i,h} J_i \delta_{i,h} H} \sum_k \lambda_{i,k} g_{i,k} J_i $$

\end{proof}

\begin{cor}
Let $S$ be set of non-zero prime ideals of $\O_K$.
The map $$\Phi\colon \bigoplus_{i = 1}^\ell \bigoplus_{C \in \Compos(K_i, K)} \O_{K_j, S}^\times \rightarrow \O_{K, S}^\times$$ $$ \bigoplus_{i = 1}^\ell \bigoplus_{C \in \Compos(K_i, K)} \mathfrak{a}_{i, C} \mapsto \sum_{i = 1}^\ell \sum_{C \in \Compos(K_i, K)} C \cdot \mathfrak{a}_{i, C} $$ has an image of maximal rank.

\end{cor}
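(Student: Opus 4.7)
The plan is to translate the group-theoretic generalised norm relation into a surjection between the $S$-unit groups after tensoring with $\Q$. Let $\Tilde{K}$ denote the Galois closure of $K$, let $G$ be its Galois group, $H < G$ the subgroup fixing $K$, and $J_i < G$ the subgroups fixing the $K_i$. Let $\Tilde{S}$ denote the set of primes of $\Tilde{K}$ lying above $S$. The $\Q$-vector space $V = \O_{\Tilde{K}, \Tilde{S}}^\times \otimes_{\Z} \Q$ carries a natural $\Q[G]$-action, and by Galois descent we have canonical $\Q$-linear isomorphisms $V^H \simeq \O_{K, S}^\times \otimes_{\Z} \Q$ and $V^{J_i} \simeq \O_{K_i, S}^\times \otimes_{\Z} \Q$, so that proving the corollary reduces to showing that the map induced by $\Phi$ on rationalised $S$-units is surjective onto $V^H$.

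By hypothesis and Proposition \ref{pro:equiv_def}, there is a surjective $\Q[G]$-module morphism $\phi\colon \bigoplus_i \Q[G/J_i]^{n_i} \twoheadrightarrow \Q[G/H]$. Applying $\Hom_{\Q[G]}(-, V)$ and using Lemma \ref{lm:fixed_points} produces an injection $V^H \hookrightarrow \bigoplus_i (V^{J_i})^{n_i}$, and by Proposition \ref{pro:pseudo_inv} we can dually construct a surjective $\Q[G]$-linear map $\Psi\colon \bigoplus_i (V^{J_i})^{n_i} \twoheadrightarrow V^H$. I would then decompose $\Psi$ into its elementary Hecke components via Proposition \ref{prop:Heckemorphisms}: each summand $V^{J_i} \to V^H$ is a Hecke operator associated to a double coset $J_i g H \in J_i \backslash G / H$, which by Propositions \ref{pro:HeckeCompo} and \ref{pro:Compo_morph_Hecke} corresponds bijectively to a compositum $C \in \Compos(K_i, K)$.

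To conclude, I would invoke Theorem \ref{tm:action_compo}, which identifies the action of a compositum $C$ on an element $x \in \O_{K_i, S}^\times$ with $N_{C/K}(\iota_{K_i}(x))$, i.e.\ precisely with the map $C \cdot x$ used to define $\Phi$. After tensoring with $\Q$, the map $\Phi$ therefore surjects onto $V^H \simeq \O_{K, S}^\times \otimes_{\Z} \Q$, and since $\O_{K,S}^\times$ is finitely generated as a $\Z$-module this is equivalent to saying that the image of $\Phi$ has maximal rank in $\O_{K, S}^\times$. The main technical obstacle is the compatibility between the abstract Hecke action on the $\Q[G]$-module $V$ and the compositum action on $S$-units, i.e.\ verifying that the $S$-unit functor $F \mapsto \O_{F,S}^\times \otimes \Q$ is a $\Q$-linear Mackey functor whose transfer and restriction maps match the compositum action introduced in Section \ref{sec:HeckeCompo}. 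Once this compatibility is established (via Theorem \ref{tm:action_compo} together with the Mackey functor property to be proved in Proposition \ref{pro:Mackeyfct}), the conclusion follows formally from the reasoning above.
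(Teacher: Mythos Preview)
The paper does not supply an explicit proof of this corollary; it is stated immediately after Proposition~\ref{pro:CritHecke} as a direct consequence of that proposition together with the Hecke--compositum dictionary of Section~\ref{sec:HeckeCompo}. Your argument is correct and fleshes out precisely that implicit reasoning: take the surjection $\bigoplus_i \Q[G/J_i]^{n_i} \twoheadrightarrow \Q[G/H]$, produce a section via semisimplicity, apply $\Hom_{\Q[G]}(-,V)$ with $V = \O_{\Tilde{K},\Tilde{S}}^\times \otimes \Q$, and identify the resulting Hecke operators with compositum actions.

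Two minor points are worth tightening. First, the appeal to Proposition~\ref{pro:Mackeyfct} at the end is unnecessary: the compatibility you flag as the ``main technical obstacle'' is already in hand from Section~\ref{sec:HeckeCompo}. The Fact following Proposition~\ref{prop:Heckemorphisms} defines $T_{HgJ}\colon V^J \to V^H$ for \emph{any} $R[G]$-module $V$, and Theorem~\ref{tm:action_compo} identifies this with the norm through the compositum; no Mackey-functor axioms beyond this are required. Second, your map $\Psi$ is $\Q$-linear, not $\Q[G]$-linear (its source and target are fixed-point spaces), and Proposition~\ref{pro:pseudo_inv} literally gives a section $\psi\colon \Q[G/H]\to\bigoplus_i \Q[G/J_i]^{n_i}$ at the level of permutation modules; your $\Psi$ is then $\psi^* = \Hom_{\Q[G]}(\psi,V)$, and it would be cleaner to say so rather than to cite Proposition~\ref{pro:pseudo_inv} directly for the map on fixed points.
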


\begin{tm} \label{tm:criteria_compositum}
If $L_1, \cdots , L_\ell$ are number fields, and $\beta_1, \cdots, \beta_\ell$ such that $L_i = \Q(\beta_i)$, then $K$ admits a \emph{generalised norm relation with respect to $L_1, \cdots , L_\ell$}, if and only if there is a relation of the form
$$\alpha = \sum_{i = 1}^{\ell} \sum_{C \in Compo(K, L_i)} a_{i, C} C \cdot \beta_i$$
where the coefficients $a_{i,C}$ are in $\Q$.
\end{tm}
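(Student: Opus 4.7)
The idea is to translate the relation $\alpha = \sum_{i,C} a_{i,C}\,C \cdot \beta_i$ into an equality of $\Q[G]$-module morphisms, and then invoke the module-theoretic characterisations of generalised norm relations from proposition~\ref{pro:equiv_def}. Let $H$ and the $J_i$ be the subgroups of $G = \Gal(\Tilde{K}/\Q)$ fixing $K$ and the $L_i$ respectively. By lemma~\ref{lm:fixed_points}, $\alpha \in K = \Tilde{K}^H$ corresponds to the $\Q[G]$-morphism $A_\alpha\colon \Q[G/H] \to \Tilde{K}$ sending $1\cdot H \mapsto \alpha$, and each $\beta_i \in L_i$ corresponds to $A_{\beta_i}\colon \Q[G/J_i] \to \Tilde{K}$. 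By propositions~\ref{prop:Heckemorphisms} and~\ref{pro:HeckeCompo}, every compositum $C \in \Compos(K, L_i)$ gives a $\Q[G]$-morphism $\phi_{C,i}\colon \Q[G/H] \to \Q[G/J_i]$, and the collection $\{\phi_{C,i}\}_{C}$ is a $\Q$-basis of $\Hom_{\Q[G]}(\Q[G/H], \Q[G/J_i])$. Moreover, the additive analogue of theorem~\ref{tm:action_compo} yields the key identity $C \cdot \beta_i = (A_{\beta_i} \circ \phi_{C,i})(1\cdot H)$.

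With these identifications, the relation $\alpha = \sum_{i,C} a_{i,C}\,C \cdot \beta_i$ becomes the equality $A_\alpha(1\cdot H) = (B \circ \Phi)(1\cdot H)$, where $B = \bigoplus_i A_{\beta_i}$ and $\Phi = \bigl(\sum_C a_{i,C}\,\phi_{C,i}\bigr)_i$. By $\Q[G]$-equivariance and lemma~\ref{lm:fixed_points}, this numerical equality at $1\cdot H$ upgrades to the full equality of morphisms $A_\alpha = B \circ \Phi$. Since the $\phi_{C,i}$ span $\Hom_{\Q[G]}(\Q[G/H], \Q[G/J_i])$, the existence of rational coefficients $a_{i,C}$ realising the relation is equivalent to the existence of some $\Q[G]$-morphism $\Phi\colon \Q[G/H] \to \bigoplus_i \Q[G/J_i]$ such that $A_\alpha = B \circ \Phi$, that is, to $A_\alpha$ factoring through $B$.

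It remains to match this factorisation condition with the existence of a generalised norm relation, characterised by item~\ref{item:surjmorph} of proposition~\ref{pro:equiv_def} as the existence of a surjection $\bigoplus_i \Q[G/J_i]^{n_i} \twoheadrightarrow \Q[G/H]$. In the forward direction, given a generalised norm relation, proposition~\ref{pro:pseudo_inv} furnishes a splitting $\Q[G/H] \hookrightarrow \bigoplus_i \Q[G/J_i]^{n_i}$; composing with $B$ and using the semisimplicity of $\Q[G]$ together with the primitivity of $\alpha$ and of the $\beta_i$, one produces the required factorisation $\Phi$. Conversely, the existence of $\Phi$ with $A_\alpha = B \circ \Phi$ forces the simple $\Q[G]$-summands appearing in the image of $A_\alpha$ to also appear in that of $B$; primitivity of $\alpha$ then allows one to recover the fixed-point criterion~\ref{item:Qfixedpts} of proposition~\ref{pro:equiv_def}, and hence the existence of a generalised norm relation. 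The main obstacle in carrying this out cleanly is the final matching of simple summands: it requires a careful use of Wedderburn's theorem for $\Q[G]$ and of the concrete description of primitive elements via their conjugates, to ensure that the specific $\alpha$ and $\beta_i$ at hand faithfully realise the relevant isotypic components of $\Tilde{K}$.
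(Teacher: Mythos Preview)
Your route diverges from the paper's. The paper treats the theorem as a direct rephrasing of Proposition~\ref{pro:CritHecke}: the relation $1H=\sum_i T_{\mu_i}(v_i)$ lives entirely in $\Q[G/H]\cong\Q[\Hom(K,\C)]$, and the bijections of Section~\ref{sec:HeckeCompo} translate it verbatim, with $1H$ playing the role of $\alpha$ and $1J_i$ that of $\beta_i$. You instead push everything into $\Tilde K$ via the evaluation maps $A_\alpha$, $A_{\beta_i}$ and recast the question as whether $A_\alpha$ factors through $B=\bigoplus_iA_{\beta_i}$.

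Your translation of the displayed relation into $A_\alpha=B\circ\Phi$ is correct, but the final ``matching'' step has a genuine gap that the appeal to primitivity does not close. Primitivity of $\alpha$ is the ring-theoretic condition $K=\Q(\alpha)$; it does \emph{not} force the $\Q[G]$-map $A_\alpha\colon\Q[G/H]\to\Tilde K$, $gH\mapsto g\alpha$, to be injective --- the Galois conjugates of a primitive element need not be $\Q$-linearly independent. When $\ker A_\alpha$ contains a full isotypic component of $\Q[G/H]$, the factorisation $A_\alpha=B\circ\Phi$ says nothing about that component, and criterion~\ref{item:Qfixedpts} of Proposition~\ref{pro:equiv_def} cannot be recovered. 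Concretely: take $G=C_2\times C_2$, $K=\Tilde K=\Q(\sqrt2,\sqrt3)$, $\alpha=\sqrt2+\sqrt3$, $L_1=\Q(\sqrt2)$, $L_2=\Q(\sqrt3)$, $\beta_1=\sqrt2$, $\beta_2=\sqrt3$. The obvious compositums give $\alpha=C_1\cdot\beta_1+C_2\cdot\beta_2$, yet there is no norm relation with respect to $\{\langle b\rangle,\langle a\rangle\}$, since the character sending both generators to $-1$ has trivial fixed space under each. Here $\ker A_\alpha$ contains precisely that character component (and the trivial one), which is why your argument cannot see the obstruction. The forward direction has the mirror problem: with $\alpha=1+\sqrt2+\sqrt3$ and $L_1,L_2,L_3$ the three quadratic subfields (with $\beta_i$ the square roots), a norm relation does exist, but $1\notin\Q\sqrt2+\Q\sqrt3+\Q\sqrt6=\mathrm{im}(B)$, so no $\Phi$ satisfies $A_\alpha=B\circ\Phi$.

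The paper's route avoids this by never leaving $\Q[G/H]$: the element $1H$ genuinely generates $\Q[G/H]$ as a $\Q[G]$-module, so no isotypic information is lost in either direction of Proposition~\ref{pro:CritHecke}. Your detour through $\Tilde K$ via the specific $\alpha,\beta_i$ can lose exactly the components needed for the equivalence; the ``careful use of Wedderburn's theorem'' you invoke cannot repair this, because the failure is not one of bookkeeping but of information genuinely destroyed by $A_\alpha$ and $A_{\beta_i}$.
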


\begin{proof}
This theorem is a rephrasing of proposition $\ref{pro:CritHecke}$ using the isomorphisms of part \ref{sec:HeckeCompo}.
\end{proof}

\section{Mackey Functors} \label{sec:Mackey}

In this section, we will recall some properties of cohomological Mackey functors. They will be useful mainly to prove the correctness of some algorithms in section \ref{sec:Algo}. The results in this section come from \cite{Boltje} and \cite{Yoshida}.

The main result in this section will be proposition \ref{pro:Mackeyfct}, which we will later use to find a relation between the $S$-units of the number fields involved in a generalised norm relation.

First let us recall the definition of a Mackey functor, as in \cite{Boltje}.

\begin{df}
    Let $G$ be a finite group and $R$ a commutative ring. A $R$-\emph{Mackey functor} $M = (M, c, \Res, \Ind)$ on $G$ is a quadruple consisting of
    \begin{itemize}
        \item a family of $R$-modules $M(H)$ where $H \leq G$,
        \item a family of homomorphisms of $R$-modules $c_{g, H}\colon M(H) \rightarrow M(^gH)$, the \emph{conjugation} maps, where $g \in G$, $H \leq G$ and $^gH = g H g^{-1}$,
        \item a family of homomorphisms of $R$-modules $\Res_J^H \colon M(H) \rightarrow M(J)$, the \emph{restriction maps}, where $J \leq H \leq G$, and
        \item a family of homomorphisms of $R$-modules $\Ind_J^H \colon M(J) \rightarrow M(H)$, the \emph{induction} maps, where $J \leq H \leq G$,
    \end{itemize}
    such that the following axioms are satified:
    \begin{itemize}
        \item (Triviality) $c_{h, H}= \Res_H^H = \Ind_H^H = \id_{M(H)}$ for all $H \leq G$ and $h \in H$.
        \item (Transitivity) $c_{g'g, H} = c_{g', ^gH} \circ c_{g, H}$, $\Res_L^J \circ \Res_J^H = \Res_L^H$ and $\Ind_J^H \circ \Ind_L^J = \Ind_L^H$ for all $L \leq J \leq H \leq G$ and $g, g' \in G$.
        \item ($G$-equivariance) $c_{g, J} \circ \Res_J^H = \Res_{^gJ}^{^gH} \circ c_{g, H}$ and  $c_{g, J} \circ \Ind_J^H = \Ind_{^gJ}^{^gH} \circ c_{g, J}$ for all $J \leq H \leq G$ and $g \in G$.
        \item (Mackey formula) For all $H \leq G$, $U, J \leq H$, one has $$ \Res_U^H \circ \Ind_J^H = \sum_{h \in U \backslash H / J} \Ind_{U \cap ^hJ}^U \circ \Res_{U \cap ^hJ}^{^hJ} \circ c_{h, J} $$ where $h \in H$ runs through a set of representatives for the double cosets $U \backslash H / K$.
    \end{itemize}
\end{df}

\begin{df}
    A $R$-Mackey functor $M$ on $G$ is called \emph{cohomological} if the axiom $$\Ind_J^H \circ \Res_J^H = [H:J] \id_{M(H)}, \mbox{ for all } J \leq H \leq G$$ holds. 
\end{df}

\begin{tm}
    Let $R$ be a commutative ring and $G$ a group. The association $H\mapsto R[G/H]$ forms a cohomological Mackey functor with the following operations:
    \begin{itemize}
        \item $\Ind_K^H\colon R[G/K] \rightarrow R[G/H], gH \mapsto gK$
        \item $\Res^H_K\colon R[G/H] \rightarrow R[G/K], gH \mapsto \sum_{h \in H/K} ghK$
        \item $c_{g,H}\colon R[G/H] \rightarrow R[G/^gH], xH \mapsto x g^{-1} ~^gH$
    \end{itemize}
\end{tm}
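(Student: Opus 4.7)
The plan is to verify directly the four axioms defining a Mackey functor, plus the cohomological relation, for the prescribed assignment $H \mapsto R[G/H]$ with the given maps. Before any of this, I would check well-definedness of $\Res^H_K$: the formula $gH \mapsto \sum_{h \in H/K} ghK$ uses a system of representatives for $H/K$, so I would verify that replacing $h$ by $hk$ with $k \in K$ leaves the term $ghK$ unchanged, and that replacing $g$ by $gh_0$ with $h_0 \in H$ permutes the representatives of $H/K$ and so leaves the sum invariant. For $\Ind^H_K$ and $c_{g,H}$, well-definedness is immediate since $K \subseteq H$ and $g H g^{-1} = {}^gH$.

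Next I would handle the easy axioms. Triviality is obvious from the formulas (when $H = K$, the index set is a singleton; conjugation by $h \in H$ sends $xH$ to $xh^{-1}H = xH$). Transitivity for $\Ind$ is immediate since sending $gL \mapsto gJ \mapsto gH$ agrees with $gL \mapsto gH$. Transitivity for $\Res$ amounts to the bijection between $H/L$ and the disjoint union over $h \in H/J$ of $hJ/L$, and the transitivity for conjugation is just $(g'g) x (g'g)^{-1} = g'(gxg^{-1})g'^{-1}$. The $G$-equivariance axioms follow similarly by direct computation: for instance $c_{g,K}(\Res^H_J(xH)) = \sum_{h \in H/K} (x h K) g^{-1}$, which after the substitution $h' = g h g^{-1}$ matches $\Res^{{}^gH}_{{}^gK}(c_{g,H}(xH))$.

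The cohomological axiom is the cleanest: for $J \leq H$ and $xH \in R[G/H]$,
\[
\Ind^H_J(\Res^H_J(xH)) = \Ind^H_J\Bigl(\sum_{h \in H/J} xhJ\Bigr) = \sum_{h \in H/J} xhH = [H:J]\,xH,
\]
since $hH = H$ for every $h \in H$. Hence $\Ind^H_J \circ \Res^H_J = [H:J] \cdot \id_{M(H)}$, which is exactly the cohomological condition.

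The main obstacle, and the last step, is the Mackey formula. For $U, J \leq H$ and a generator $xJ \in R[G/J]$, I would compute
\[
\Res^H_U(\Ind^H_J(xJ)) = \Res^H_U(xH) = \sum_{u \in H/U} x u^{-1} U,
\]
and on the right hand side I would unpack each summand $\Ind^U_{U \cap {}^hJ}\circ \Res^{{}^hJ}_{U \cap {}^hJ} \circ c_{h,J}(xJ)$ using the formulas above. The decomposition of $H$ into double cosets $U h J$ gives a bijection between $H/U$ and the disjoint union, over $h \in U \backslash H / J$, of $(U \cap {}^hJ)\backslash U$; matching representatives through this bijection rewrites the single sum over $H/U$ as the nested sum on the right hand side. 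This is the standard double coset decomposition argument, and once set up the coefficients match term by term, completing the verification.
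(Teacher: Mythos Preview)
Your approach---a direct axiom-by-axiom verification---is sound and essentially correct. The paper, by contrast, does not prove this statement at all: it simply refers the reader to Yoshida's paper (example 4.1 there, with $D$ the trivial group). So your route is more self-contained and more informative for a reader who does not have that reference at hand; the trade-off is that the paper's one-line citation keeps the exposition focused on the new material.

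That said, there are two small slips in your Mackey-formula paragraph that you should correct before writing it up in full. First, from the stated formula one has $\Res^H_U(xH) = \sum_{h \in H/U} xhU$, so the $u^{-1}$ in your display is not right as written. Second, the relevant double-coset bijection identifies $H/U$ with the disjoint union, over $h \in U\backslash H/J$, of ${}^hJ/(U \cap {}^hJ)$, not of $(U \cap {}^hJ)\backslash U$; indeed, unfolding the right-hand side gives
\[
\Ind^U_{U \cap {}^hJ}\circ \Res^{{}^hJ}_{U \cap {}^hJ} \circ c_{h,J}(xJ)
= \sum_{j' \in {}^hJ/(U\cap {}^hJ)} x\, h^{-1} j'\, U,
\]
and it is the elements $h^{-1}j'$ (with $h$ ranging over double-coset representatives and $j'$ over ${}^hJ/(U\cap {}^hJ)$) that form a complete set of representatives for $H/U$. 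With these indexing corrections the term-by-term matching goes through exactly as you outline.
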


\begin{proof}
See \cite[example 4.1]{Yoshida}, with $D$ the trivial group.
\end{proof}

\begin{tm}\label{tm:THgK}
Let $M$ be a cohomological Mackey functor.
If $H,K$ are subgroups of $G$ and $g$ an element of $G$, let us define the operator $$T_{HgK}\colon M(K) \rightarrow M(H), x \mapsto \Ind_{^gK\cap H}^H \circ \Res_{^gK \cap H}^{^gK} \circ c_{g,K} (x) .$$
Then, all operators of this form follow the rules of compositions of $R[H\backslash G / K]$ comming from the isomorphism of proposition \ref{prop:Heckemorphisms}.
\end{tm}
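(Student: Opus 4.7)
My plan is to verify two things: first that each operator $T_{HgK}$ depends only on the double coset and not on the chosen representative $g$, and second that under this well-defined assignment, the composition of operators agrees with the Hecke algebra product induced by $\Phi$ in Proposition~\ref{prop:Heckemorphisms}.

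For well-definedness, suppose $g' = h g k$ with $h \in H$ and $k \in K$. Then $^{g'}K = {}^h(^gK)$ and I can rewrite $c_{g',K}$ via transitivity as $c_{h,{}^gK}\circ c_{g,K}\circ c_{k,K}$; the last factor is trivial by the triviality axiom since $k\in K$, and $c_{h,?}$ can be absorbed on the left by using $G$-equivariance to push it through $\Res$ and then $\Ind$, at which point another application of triviality (with $h\in H$) kills it. Thus the prescription is invariant.

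For compatibility with composition, I would consider $T_{HgK}\colon M(K)\to M(H)$ and $T_{KhL}\colon M(L)\to M(K)$ and compute the composition
\begin{equation*}
T_{HgK}\circ T_{KhL}=\Ind_{^gK\cap H}^H\circ\Res_{^gK\cap H}^{^gK}\circ c_{g,K}\circ\Ind_{^hL\cap K}^K\circ\Res_{^hL\cap K}^{^hL}\circ c_{h,L}.
\end{equation*}
Using $G$-equivariance to shift $c_{g,K}$ past the induction gives $c_{g,K}\circ\Ind_{^hL\cap K}^K=\Ind_{^{gh}L\cap {}^gK}^{^gK}\circ c_{g,{}^hL\cap K}$. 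The central obstacle, which is where the bulk of the work goes, is then to evaluate the resulting expression $\Res_{^gK\cap H}^{^gK}\circ\Ind_{^{gh}L\cap {}^gK}^{^gK}$ sitting inside. Here I apply the Mackey formula inside the subgroup $^gK$ with $U=~{}^gK\cap H$ and $J=~{}^{gh}L\cap {}^gK$, producing a sum over $x\in U\backslash {}^gK/J$ of terms $\Ind_{U\cap {}^xJ}^U\circ\Res_{U\cap {}^xJ}^{^xJ}\circ c_{x,J}$. Reassembling the outer $\Ind$/$\Res$ factors and collecting the conjugation maps via transitivity, each summand takes the form $T_{HyL}$ for a suitable $y$ derived from $g$, $x$, $h$.

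The remaining bookkeeping, which is essentially just reindexing, is to check that the set $\{HyL\}$ produced exactly matches the product $(HgK)\cdot(KhL)$ in $R[H\backslash G/L]$ coming from $\Phi$. This is straightforward because, when $M$ is specialised to the Mackey functor $J\mapsto R[G/J]$, the operators $T_{HgK}$ are precisely the Hecke operators of Proposition~\ref{prop:Heckemorphisms} (this is the content of the Fact after that proposition), so the composition on the specific Mackey functor $R[G/?]$ must already obey these rules; the universal computation above then shows that the same rules hold for an arbitrary cohomological $M$, as the only axioms used were transitivity, $G$-equivariance and the Mackey formula, all of which are part of the definition. I do not expect to need the cohomological axiom itself for this statement, although it is essential for related compatibilities discussed elsewhere.
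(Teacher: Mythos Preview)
The paper gives no argument at all: its proof is a one-line citation to Yoshida. Your sketch is essentially the standard computation that underlies that reference, and the well-definedness argument and the use of the Mackey formula are correct. However, there is a genuine gap in the composition step, and it is exactly where you dismiss the cohomological axiom.

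After pushing the conjugation maps through and applying the Mackey formula inside ${}^gK$, the $x$-summand you obtain is
\[
\Ind_{A}^{H}\circ\Res_{A}^{{}^yL}\circ c_{y,L},\qquad A=H\cap {}^gK\cap {}^yL,\quad y=xgh,
\]
not $T_{HyL}=\Ind_{H\cap{}^yL}^{H}\circ\Res_{H\cap{}^yL}^{{}^yL}\circ c_{y,L}$. In general $A\subsetneq H\cap{}^yL$, so the summand is \emph{not} of the form $T_{HyL}$; a concrete instance is $g=h=1$, $K\subset H$, $L=H$, where the whole composite collapses to $\Ind_K^H\circ\Res_K^H$, which is not $T_{H1H}=\id$. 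To finish, one factors
\[
\Ind_A^H\circ\Res_A^{{}^yL}
=\Ind_{H\cap{}^yL}^H\circ\bigl(\Ind_A^{H\cap{}^yL}\circ\Res_A^{H\cap{}^yL}\bigr)\circ\Res_{H\cap{}^yL}^{{}^yL},
\]
and it is precisely the cohomological axiom $\Ind_A^{H\cap{}^yL}\circ\Res_A^{H\cap{}^yL}=[H\cap{}^yL:A]\,\id$ that turns the middle factor into a scalar and yields $[H\cap{}^yL:A]\cdot T_{HyL}$. These indices are then the structure constants of the Hecke product. So your final remark is backwards: the cohomological hypothesis is not incidental but is exactly what makes the ``reassembly'' step go through; without it the statement is false for general Mackey functors.
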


\begin{proof}
See \cite[theorem 4.1]{Yoshida}.
\end{proof}

\begin{pro}\label{pro:Mackeyfct}
    Let $R$ be a ring, $G$ a group, $H < G$ a subgroup and $\{J_i\}$ a set of subgroups.
    If we have $\phi\colon \bigoplus_{i=1}^m R[G/J_i] \rightarrow R[G/H]$ a morphism of $R[G]$-modules and $\psi\colon R[G/H] \rightarrow \bigoplus_{i=1}^m R[G/J_i]$ a morphism of $R[G]$-modules, such that $\phi \circ \psi = d \cdot \id_{R[G/H]}$, then for every cohomological Mackey functor $M$, there exists $\phi_M\colon \bigoplus_{i=1}^m M(J_i) \rightarrow M(H)$ and  $\psi_M\colon M(H) \rightarrow \bigoplus_{i=1}^m M(J_i)$ such that $\phi_M \circ \psi_M = d \cdot \id_{M(H)}$.
\end{pro}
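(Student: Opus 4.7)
The plan is to exploit the universal construction of Hecke operators provided by Theorem~\ref{tm:THgK}: each element of a double-coset algebra defines an operator on \emph{any} cohomological Mackey functor, and compositions of such operators obey the same algebraic relations regardless of the choice of Mackey functor.

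First, I would decompose the input morphism componentwise as $\phi = \bigoplus_i \phi_i$ with $\phi_i \in \Hom_{R[G]}(R[G/J_i], R[G/H])$, and similarly $\psi = \bigoplus_i \psi_i$. The regular Mackey functor $M_0 \colon L \mapsto R[G/L]$ is cohomological, and by Proposition~\ref{prop:Heckemorphisms} each $\phi_i$ (resp.\ $\psi_i$) can be expressed as an $R$-linear combination of Hecke operators of the form $T_{HgJ_i}^{M_0}$ (resp.\ $T_{J_i g H}^{M_0}$) acting on $M_0$; write $\phi_i = \sum_g \alpha_{i,g} T_{HgJ_i}^{M_0}$ and $\psi_i = \sum_g \beta_{i,g} T_{J_i g H}^{M_0}$.

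For any cohomological Mackey functor $M$, I would then define, using the operators supplied by Theorem~\ref{tm:THgK},
$$\phi_M := \bigoplus_i \Bigl(\sum_g \alpha_{i,g}\, T_{HgJ_i}^M\Bigr) \colon \bigoplus_i M(J_i) \to M(H), \qquad \psi_M := \bigoplus_i \Bigl(\sum_g \beta_{i,g}\, T_{J_i g H}^M\Bigr) \colon M(H) \to \bigoplus_i M(J_i).$$
The claim $\phi_M \circ \psi_M = d \cdot \id_{M(H)}$ now follows because, again by Theorem~\ref{tm:THgK}, these compositions of Hecke operators obey exactly the multiplication rules of the double-coset algebra, which are the same for every cohomological Mackey functor. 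Since the identity $\phi \circ \psi = d \cdot \id_{R[G/H]}$ holds on $M_0$ and is itself an equality expressible through those Hecke composition rules, the same relation transfers to $M$; one finishes by using the triviality axioms $\Ind_H^H = \Res_H^H = c_{1,H} = \id_{M(H)}$, which give $T_{H \cdot 1 \cdot H}^M = \id_{M(H)}$.

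The main obstacle is essentially bookkeeping: one must carefully align the direction conventions between Proposition~\ref{prop:Heckemorphisms} (which parametrises $\Hom_{R[G]}(R[G/H], R[G/J])$ by $R[H \backslash G / J]$) and Theorem~\ref{tm:THgK} (whose operators go $M(K) \to M(H)$), and check that the translation via the natural anti-isomorphism $HgK \leftrightarrow Kg^{-1}H$ between the two Hecke algebras is consistent with composition. Once this set-up is in place, no substantive computation remains, since every ingredient is then provided by the universality statement of Theorem~\ref{tm:THgK}.
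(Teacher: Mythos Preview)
Your argument is correct and is in fact more explicit than the paper's own treatment: the paper does not prove Proposition~\ref{pro:Mackeyfct} directly but simply cites \cite[Corollary~1.4]{Boltje}, and then records in Remark~\ref{req:decomp} exactly the construction you carry out (decompose $\phi,\psi$ into double-coset operators via Proposition~\ref{prop:Heckemorphisms}, then transport them to an arbitrary cohomological Mackey functor via Theorem~\ref{tm:THgK}). So your approach and the paper's intended one coincide; you have just written out what the cited result says.
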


\begin{proof}
See \cite[corollary 1.4]{Boltje}.
\end{proof}

\begin{req}\label{req:decomp}

We can describe more precisely the form of $\phi_M$ and $\psi_M$. They are obtained by decomposing $\phi$ and $\psi$ into sums of morphisms respectively $R[G/J_i] \rightarrow R[G/H]$ and $R[G/H] \rightarrow R[G/J_i]$, expressing these morphisms as elements of $H \backslash G / J_i$ or $J_i \backslash G / H$ and then applying theorem \ref{tm:THgK}.

\end{req}

\begin{req} \label{req:S_units}
This previous proposition, along with proposition \ref{pro:optimal_coef}, give a induction relation between $M(H)$ and the $M(J_i)$ for every cohomological Mackey functor $M$. We will use it in section \ref{sec:Algo} with $M(H) = \O_{\Tilde{K}^H, S}^\times$ and $M(J_i) = \O_{\Tilde{K}^{J_i}, S}^\times$, but it could also be useful to compute other Mackey functors.
\end{req}

\section{Algorithms} \label{sec:Algo}

In this section, we will present algorithms to resolve some problems around the notion of generalised norm relation.

We will suppose the Galois group and the Galois closure of the fields we will use are not known a priori.

Note that if we know the Galois group of a field, it is easy to make an algorithm that determines all the generalised norm relation. We implemented such an algorithm and it has been useful to find examples of generalised norm relations. There is also a method to calculate the group of $S$ units of $\tilde{K}^H$ using the $S$-units of the $\Tilde{K}^{J_i}$ if $\tilde{K}$ is a Galois extension of $\Q$ of Galois group $G$ and $G$ admits a generalised norm relation over $\Q$ with respect to~$H$ and~$\J = \{J_1, \cdots, J_\ell \}$.

Here, we will describe algorithms relying only on field theory, and without having to compute any Galois group which we do not know how to compute in polynomial time. 

The main algorithm in this section is algorithm \ref{algo:polynomial}, which computes a $\Z$-basis of the $S$-units of a number field $K$, given some fields $K_j$ such that $K$ admits a generalised norm relation with respect to the $K_j$. Its complexity is polynomial in the size of the input, including a $\Z$-basis of the $S$-units of the $K_j$ (see theorem \ref{tm:algopoly}).

\begin{algo}

\underline{input:} A number field $K = \tilde{K}^H$ and a family $(K_i = \tilde{K}^{J_i})$ of number fields given by~$f$, the minimal polynomial of $\alpha$ with  $K = \Q(\alpha)$, and $f_i$ the minimal polynomials of the~$\beta_i$, with $K_i = \Q(\beta_i)$.

\underline{output:} A boolean indicating whether there is a generalised norm relation, and if so, a formula of the form 
$$1H = \sum_i T_{\sum_h \mu_{i,h} J_i \delta_{i,h} H} \sum_k \lambda_{i,k} g_{i,k} J_i$$ in $\Z[G/H]$

\begin{itemize}
    \item For all $i$, list all compositums of $K$ and $K_i$.
    
    If $f_i = p_1 \cdots p_r \in K[X]$, Then, the compositums are the $K[X]/ (p_j)$, with $\iota_K$  the inclusion, and $\iota_{L_i}\colon \beta_i \mapsto X$.
    \item For all $i$, and for all $\sigma \in \Hom(L_i, \C)$ and for every compositum~$\CC$, compute $\CC \cdot \sigma \in \Q[\Hom(K, \C)]$.
    \item By linear algebra in $\Q[G/H] = \Q[\Hom(K, \C)]$, find a linear combination of these element that amounts to $1H$ (if such a combination exists).
\end{itemize}

\end{algo}

\begin{tm}
This algorithm is correct and its complexity is polynomial in the size of the input.
\end{tm}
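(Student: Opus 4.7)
The plan is to handle correctness and polynomial-time complexity separately for each of the three steps of the algorithm, using the compositum-based characterisation developed in Section~\ref{sec:HeckeCompo} together with Proposition~\ref{pro:CritHecke}.

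First, I would argue that factoring $f_i$ over $K[X]$ correctly enumerates the compositums of $K$ and $K_i$. By Lemma~\ref{lm:compoKL}, compositums are in bijection with quotients of $K \otimes_\Q K_i$, and the factorisation $f_i = \prod_j p_j$ in $K[X]$ yields $K \otimes_\Q K_i \cong \prod_j K[X]/(p_j)$, so each irreducible factor provides exactly one compositum with the embeddings $\iota_K$ and $\iota_{L_i}$ read off the construction. Polynomial factorisation over a number field is known to be polynomial time (e.g. via LLL-based algorithms), and the number of factors is bounded by $[K_i:\Q]$.

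Second, for each compositum $\CC$ and each $\sigma \in \Hom(L_i, \C)$, Proposition~\ref{pro:action_compo_embedd} provides an explicit formula for $\CC \cdot \sigma \in \Q[\Hom(K, \C)]$, which under the identification $\Hom(K, \C) \cong G/H$ from Proposition~\ref{pro:quotient_embedd} corresponds to the image of $\sigma$ under a Hecke operator. The matchings between embeddings required to evaluate this formula can be carried out by numerical approximation of the roots of $f$ and $f_i$ at polynomially bounded precision, or symbolically from the factorisation data produced in step one. The total number of pairs $(\CC, \sigma)$ is $O\bigl(\sum_i |\Compos(K, K_i)| \cdot [K_i:\Q]\bigr)$, each action lies in a space of dimension $[K:\Q]$, so the whole step is polynomial.

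Third, by Proposition~\ref{pro:CritHecke} combined with the dictionary of Section~\ref{sec:HeckeCompo}, a generalised norm relation exists precisely when $1H \in \Q[\Hom(K,\C)]$ lies in the $\Q$-span of the vectors $\CC \cdot \sigma$ computed at step two; note that since $1H$ generates $\Q[G/H]$ as a $\Q[G]$-module, membership in the span is equivalent to the existence of a surjective morphism $\bigoplus_i \Q[G/J_i]^{n_i} \twoheadrightarrow \Q[G/H]$, as in Proposition~\ref{pro:equiv_def}~(\ref{item:surjmorph}). Testing this and producing, when it exists, an explicit linear combination is a single Gaussian elimination on a linear system of polynomial size, and translating back through the isomorphisms of Section~\ref{sec:HeckeCompo} yields the desired $\mu_{i,h}$ and $\lambda_{i,k}$.

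The main obstacle I anticipate is ensuring that the bit-size of the output rationals remains polynomial in the input; this should follow from Cramer's rule combined with Hadamard's inequality applied to the linear system of step three, whose matrix entries are rationals of polynomially bounded height coming from the explicit formula for $\CC \cdot \sigma$ and the identification $\Hom(K,\C) \cong G/H$.
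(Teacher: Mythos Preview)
Your proposal is correct and follows essentially the same approach as the paper: correctness via the compositum criterion (the paper cites Theorem~\ref{tm:criteria_compositum}, which is precisely the rephrasing of Proposition~\ref{pro:CritHecke} through the Section~\ref{sec:HeckeCompo} dictionary that you invoke), and polynomial complexity step by step, with LLL-based factorisation in $K[X]$ for enumerating compositums and counting the pairs $(\CC,\sigma)$ the same way. Your discussion of the bit-size of the output coefficients via Cramer and Hadamard is more careful than the paper's own treatment, which does not address this point.
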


\begin{proof}
The correctness of the algorithm follows from theorem \ref{tm:criteria_compositum}.

For the complexity, we have to check that every step of the algorithm works in polynomial time.
\begin{itemize}
    \item Listing all the compositums boils down to a problem of factorisation of polynomials in $K[X]$, which is polynomial thanks to the LLL algorithm (see \cite{LLL}).
    The number of compositums to list is at most $\sum_{j = 1}^\ell \deg(K_j)$.
    \item Given a complex embedding $\sigma$ of a field $K_j$, and a compositum $\CC$ of~$K$ and $K_j$, computing $\CC \cdot \sigma$  is in $\O(\deg(K_j) \times \deg(K))$. And the number of times such a computation occurs is at most $\sum_j \deg(K_j) \times |\Compos(K, K_j)|$.
    What's more, the size of $\CC \cdot \sigma$ is polynomial in the size of the input.
\end{itemize}

\end{proof}

\begin{algo} \label{algo:polynomial}
\underline{input:} A number field $K$ and a set of number fields $\{K_j\}$, each given by an irreducible polynomial in $\Q[X]$ and such that $K$ admits a generalised norm relation with respect to the $K_j$, a set $S$ of prime numbers, and for each $j$ a $\Z$-basis $B_j$ of $\O_{K_j, S}^\times$.

\underline{output:} A $\Z$-basis of $\O_{K, S}^\times$.

\begin{enumerate}

    \item \label{item:alg_primes} Compute $\pi_1, \cdots, \pi_k$ all the prime divisors of $(n!)^2$ where $n$ is the degree of $K$ (ie all the primes lesser than $n$). Let $r_i = v_{p} ( (n!)^2 )$.

    \item \label{item:alg_compos} For all $j$, compute all the compositums of $K$ and $K_j$ (up to isomorphism).

    \item \label{item:alg_im_compos} Compute the set $B$ of images of every element of the $B_j$ by every compositum of $K$ and $K_j$.

    \item \label{item:alg_subgroup} Compute the subgroup $V \subset \O_{K, S}^\times$ generated by $B$.

    \item \label{item:alg_boucle} For every $i$:
    \begin{itemize}
        \item $V_i \leftarrow V$
        \item $V_i \leftarrow \langle V_i, (\alpha_1)^{\frac{1}{p_i} }, \cdots, (\alpha_m)^{\frac{1}{p_i} } \rangle $ where $(\overline{ \alpha_i })$ is a basis of $(V_i \cap (K^\times)^{p_i} )/ V_i^{p_i}$. (See \cite[corollary 4.13]{norm})
        \item Reduce the basis of $V_i$ as in \cite[lemma 7.1]{lattice}.
    \end{itemize}

    \item \label{item:alg_concat} $V \leftarrow V_1 \cdots V_k$

    \item \label{item:alg_return} Return a basis of $V$.
    
\end{enumerate}

\end{algo}

\begin{tm} \label{tm:algopoly}
Assume the generalized Riemann Hypothesis (GRH). Then this algorithm is correct and its complexity is polynomial in the size of the input.
\end{tm}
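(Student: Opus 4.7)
The plan is to combine the Mackey-functor framework of Section~\ref{sec:Mackey} with the bound on the optimal coefficient from Theorem~\ref{tm:bornec} to show that the subgroup $V$ constructed in Steps~\ref{item:alg_compos}--\ref{item:alg_subgroup} has finite index in $\O_{K,S}^\times$ whose exponent divides $(n!)^2$, and then to argue that the $p_i$-saturation performed in Step~\ref{item:alg_boucle} recovers the full group. The complexity will be proved step by step, the main non-routine ingredients being LLL-based polynomial factorisation over $K$, Theorem~\ref{tm:action_compo} for evaluating the compositum action, and the GRH-conditional polynomial-time $p$-saturation from \cite{norm}.

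For correctness I would first observe, as announced in Remark~\ref{req:S_units}, that $H \mapsto \O_{\tilde{K}^H, S}^\times$ (where $S$ is interpreted as the $G$-stable set of primes of $\tilde{K}$ above the input primes) is a cohomological Mackey functor on $G = \Gal(\tilde{K}/\Q)$, with induction given by relative norms. A generalised norm relation provides, by Proposition~\ref{pro:optimal_coef}, morphisms $\psi\colon \Z[G/H] \hookrightarrow \bigoplus_i \Z[G/J_i]^{n_i}$ and $\phi\colon \bigoplus_i \Z[G/J_i]^{n_i} \to \Z[G/H]$ with $\phi\circ\psi = c(\J,H)\cdot\id$. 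Proposition~\ref{pro:Mackeyfct} together with Remark~\ref{req:decomp} then yields morphisms $\phi_M, \psi_M$ on the $S$-unit Mackey functor satisfying $\phi_M\circ\psi_M = c(\J,H)\cdot\id$, whose components are by Theorem~\ref{tm:action_compo} exactly the compositum action maps $\CC\cdot(\cdot) = N_{C/K}(\iota_{K_i}(\cdot))$. Hence the image of $\phi_M$ coincides with the subgroup $V$ produced by Steps~\ref{item:alg_compos}--\ref{item:alg_subgroup}, and the relation $c(\J,H)\cdot \O_{K,S}^\times \subseteq V$ combined with Theorem~\ref{tm:bornec} (giving $c(\J,H) \mid |G|^2 \mid (n!)^2$) ensures that the prime divisors of the exponent of $\O_{K,S}^\times/V$ lie among $\{\pi_1,\dots,\pi_k\}$ with $p_i$-valuation at most $r_i$. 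The iterated $p_i$-saturation in Step~\ref{item:alg_boucle}, justified by \cite[corollary 4.13]{norm}, then recovers $\O_{K,S}^\times$.

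For the complexity I would check each step in turn. Step~\ref{item:alg_primes} is immediate. Step~\ref{item:alg_compos} reduces to factoring each defining polynomial in $K[X]$, which is polynomial by LLL (see~\cite{LLL}) and produces at most $\sum_j \deg(K_j)$ compositums. Step~\ref{item:alg_im_compos} evaluates the norms $N_{C/K}(\iota_{K_j}(b))$ for each $b$ in each input basis, whose bit-sizes grow polynomially by Theorem~\ref{tm:action_compo} together with standard height bounds on relative norms. Step~\ref{item:alg_subgroup} is polynomial linear algebra on the exponent vectors of the resulting $S$-units. The $p_i$-saturation loop runs at most $\sum_i r_i = O(n)$ rounds of root-extraction in total, each polynomial under GRH by \cite[corollary 4.13]{norm}, interleaved with the lattice reduction of \cite[lemma 7.1]{lattice}. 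The hardest part to pin down rigorously will be bit-size control inside this loop: naive $p_i$-th roots may have much larger representatives than their $p_i$-th powers, so without the reduction of \cite[lemma 7.1]{lattice} the bases of the $V_i$ could blow up exponentially in $r_i$. Establishing, using the GRH-conditional size bounds on $S$-unit generators from \cite{norm}, that the reduced bases remain of polynomial bit-length at every iteration is the technical core of the complexity argument.
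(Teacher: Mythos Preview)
Your proposal is correct and follows essentially the same approach as the paper: Mackey-functor transfer via Proposition~\ref{pro:Mackeyfct}, Remark~\ref{req:decomp} and Theorem~\ref{tm:action_compo} for correctness, Theorem~\ref{tm:bornec} for the $(n!)^2$ exponent bound, and then a step-by-step complexity analysis invoking LLL factorisation, Ge's relation-finding, and the GRH-conditional $p$-saturation of~\cite{norm}. The only slip is the estimate $\sum_i r_i = O(n)$ for the total number of saturation rounds (it is $O(n\log n)$), which does not affect the polynomial-time conclusion.
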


\begin{proof}
First, let us prove the correctness. Let $G$ be the Galois group of $K$, let $H$ the subgroup fixing $K$ and for every $i$, let~$J_i$ the subgroup fixing $K_i$. Since there is a generalised norm relation, we know that there exists an integer $c$, a surjective morphism of $\Z[G]$-module $\phi\colon \bigoplus_i \Z[G/J_i] \rightarrow \Z[G/H]$ and an injective morphism of $\Z[G]$-modules $\psi\colon \Z[G/H] \rightarrow \bigoplus_i \Z[G/J_i]$, such that $\phi \circ \psi = \id$ (by proposition \ref{pro:optimal_coef}). 

Therefore, by proposition \ref{pro:Mackeyfct}, for any cohomological Mackey functor $M$, there is a surjective morphism $\phi_M\colon \bigoplus_{i=1}^m M(J_i) \rightarrow M(H)$. Consider $M(H) = \O_{\Tilde{K}^H, S}$ and $M(J_i) = \O_{\Tilde{K}^{J_i}, S}$. Since we know by remark \ref{req:decomp} that $\phi_M$ can be expressed as a sum of elements of $J_i \backslash G / H$, and since, by proposition \ref{pro:HeckeCompo}, these can be seen as elements of $\Compos(K_i, K)$, this proves the correctness.

Then let us prove the complexity. 
Let $\Sigma$ denote the total size of the input. To compute all the $\pi_i$ in step \ref{item:alg_primes}, we can use a sieve method, which is polynomial in $n$ where $n$ is the degree of $K$. Therefore, step \ref{item:alg_primes} takes polynomial time.

As seen before, for every $j$, computing all the compositums of $K$ and $K_j$ takes polynomial time. What's more, the number and the size of the compositums obtained are also polynomial. So step \ref{item:alg_compos} is also polynomial.

The size of the image of an element $x \in K_j$ by a compositum $\CC$ is also polynomial, since the map induced by~$\CC$ is the composition of the injection $K_j \rightarrow C$ and the norm $C \rightarrow K$. So step \ref{item:alg_im_compos} is polynomial.

For step \ref{item:alg_subgroup} as well as step \ref{item:alg_return},  one can deduce a basis from a generating set of the groups involved in polynomial time. The algorithms of \cite{Ge} provide a basis of the
 relations between the generators, and the Hermite normal form \cite{HNF} allows us to
 obtain a basis of the group in polynomial time.
 
The saturation in step \ref{item:alg_boucle} is performed as many times as the number of primes dividing $(n!)^2$, counted with multiplicity, according to theorem \ref{tm:bornec}. That number is polynomial in $n$, since the number of different primes in the decomposition of $(n!)^2$ is at most $n$, and for every prime $p$, $v_p(n!) \leq \frac{\log(n!)}{\log(2)}  = \O(n \log(n))$.

\end{proof}

\begin{req}
    The paper~\cite{prod_HR} gives a polynomial method to approximate $\kappa_K$, the residue of the Dedekind zeta function $\zeta_K(s)$ at $s = 1$ of a number field~$K$, from the discriminant $\Delta_K$ and the norm of prime ideals of $K$.
\end{req}

We now present an alternative to Algorithm~\ref{algo:polynomial}, which is more efficient in practice but not provably polynomial-time.

\begin{algo} \label{algo:faster}

\underline{input:} A number field $K = \tilde{K}^H$ and a family $(K_i = \tilde{K}^{J_i})$ of number fields, such that $K$ admits a generalised norm relation with respect to $K_1, \cdots K_\ell$. We know $f$, the minimal polynomial of $\alpha$ with  $K = \Q(\alpha)$, and $f_i$ the minimal polynomials of the  $\beta_i$, with $L_i = \Q(\beta_i)$.

\underline{output:} The structure of the class group of $K$

\begin{enumerate}
    \item For every $K_j$, compute every compositums of $K$ and $K_j$.


    \item Compute $HR_K = h_K \Reg_K$ using the approximation method in~\cite{prod_HR}. An approximation up to a factor 1.5 is enough.

    \item Initialize $T$ a set of prime ideals $\mathfrak{p}$ such that $N(\mathfrak{p}) = 1 \mod d$, where $d = \deg(K)^2$.
    
    \emph{The primes in $T$ will be used to detect $d$-th powers.}

    \item Initialize $S_\Q$ a set of prime numbers, and compute the set $S$ of prime ideals of $K$ above the primes in $S_\Q$.
    
    \emph{We hope that $S$ will generate the class group.}

    \item For all $K_j$, let $S_j$ be the set of prime ideals of $K_j$ above all primes $p$ in $S_\Q$, and compute a set $U_j$ of generators of the group of $S_j$ units of $K_j$. 

    \item For each $j$, for each $\mathfrak{p}$ in $S_j$, compute the vector $V_{j, \mathfrak{p}}$ of valuations of every element of $U_j$ at $\mathfrak{p}$. 

    \item Compute the matrix of a map $\Phi$, that sends all the ideals above all the primes in $S_j$ to their image by every compositum. Apply this matrix to every $V_{j, \mathfrak{p}}$, then concatenate all the vectors to obtain a matrix $M$.

    \item Apply the action of every compositum to every generator of the $U_j$ then compute the discrete logarithms in $\F_\mathfrak{p}$ of for every $p$ in $T$. Concatenate all the vectors of discrete logarithms to obtain a matrix $N$.
    
    \item Concatenate the matrix $M$ and $N$ and compute the kernel $R$ modulo $d$ of this matrix.
    
    \emph{We hope to obtain a basis of the $d$-saturation of the images in $K$ of the $S_j$-units of the $K_j$ by the actions of every compositum.}

    \item Compute the Smith normal form of the concatenation of $M$ and a basis of $R$.
    
    \emph{If $T$ and $S$ are large enough, that should give us the structure of $\Cl(K)$}.

    \item Compute the regulator of the group of units of $K$ obtained by the $d$-saturation of images of the units of the $K_j$ by the actions of every compositum. Multiply it with the class number to obtain a new $HR$ product, that we will denote by $\Tilde{HR_K}$. If the approximation for $HR_K$ is up to a factor $1.5$, then the regulator should be calculated with precision up to a factor $\frac{4}{3}$.

    \item Check if the $HR$ product corresponds to the one in step 3. If not, increase the size of $T$ and $S_\Q$ and go back to step 5.
    
\end{enumerate}

\end{algo}

\begin{tm}
    If this algorithm terminates, then it is correct.
\end{tm}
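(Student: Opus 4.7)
The claim is that whenever the $HR$-product check in step~$12$ succeeds, the abelian group produced by the Smith normal form in step~$10$ is isomorphic to $\Cl(K)$. I would structure the argument in three parts, all organised around the finitely generated subgroup $V \subseteq \O_{K,S}^\times$ produced at the end of the saturation loop in step~\ref{item:alg_boucle}.

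First, I would verify that $V$ consists of genuine $S$-units of $K$ and already has finite index in $\O_{K,S}^\times$ before any comparison is made. The generators computed in step~\ref{item:alg_im_compos} are images, under compositum actions, of $S_j$-units of each $K_j$; by theorem~\ref{tm:action_compo} these are bona fide $S$-units of $K$. The generalised norm relation together with proposition~\ref{pro:Mackeyfct} applied to the cohomological Mackey functor $H \mapsto \O_{\tilde K^H, S}^\times$, combined with the bound of theorem~\ref{tm:bornec}, implies that the subgroup generated in step~\ref{item:alg_subgroup} already has index dividing $|G|^2$ in $\O_{K,S}^\times$ — and hence, since $|G|$ divides $n!$, an index whose prime divisors are all covered by those of $(n!)^2$ appearing in step~\ref{item:alg_primes}. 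During saturation, the algorithm extracts a $d$-th root of a product $\prod u_i^{r_i}$ only when $r$ lies in the kernel of $[M \mid N]$ modulo $d$, which forces the $S$-valuations of the product to be divisible by $d$ and its image in each $\F_{\mathfrak p}^\times/(\F_{\mathfrak p}^\times)^d$ (for $\mathfrak p \in T$) to be trivial. A standard Chebotarev/Kummer argument then shows that, once $T$ contains enough primes of norm $\equiv 1 \pmod d$, the restriction of the natural map $\O_{K,S}^\times/(\O_{K,S}^\times)^d \to \prod_{\mathfrak p\in T}\F_{\mathfrak p}^\times/(\F_{\mathfrak p}^\times)^d$ to the finitely generated subgroup at hand is injective, so the element is truly a $d$-th power in $K^\times$; if not, the algorithm fails to find a consistent root and step~$12$ enlarges~$T$.

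Second, I would compare the computed $\tilde{HR}_K := |\tilde C|\cdot \tilde R$ to the true $h_K R_K$. The Smith normal form in step~$10$ identifies $\tilde C$ with $\bigoplus_{\mathfrak p \in S}\Z/\mathrm{val}(V)$, and $\tilde R$ in step~$11$ is the regulator of $V$. From the exact sequence
$$0 \to \O_K^\times \to \O_{K,S}^\times \xrightarrow{\mathrm{val}} \bigoplus_{\mathfrak p \in S} \Z \to \Cl(K) \to \Cl_S(K) \to 0,$$
together with the $S$-version of the analytic class number formula relating $h_K R_K$ and $h_{K,S} R_{K,S}$, one expresses the ratio $\tilde{HR}_K/HR_K$ as an explicit positive rational multiplier built from the indices $[\O_{K,S}^\times : V]$, $[\O_K^\times : V \cap \O_K^\times]$, and $|\Cl_S(K)|$. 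This multiplier equals~$1$ precisely when $V = \O_{K,S}^\times$ and $S$ generates $\Cl(K)$, in which case the Smith normal form computation of step~$10$ gives $\tilde C \cong \Cl(K)$ canonically.

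The combined approximation slack of the check in step~$12$ is $1.5\times 4/3 = 2$. Since the multiplier from the second part is either exactly $1$ or at least $4$ — the index contributions entering it do so quadratically, so any non-trivial index of $V$ or non-trivial $\Cl_S(K)$ produces a jump by a factor $\geq 4$ — a success of the $HR$ check forces it to be $1$, and hence $V = \O_{K,S}^\times$ with $\Cl_S(K)$ trivial; then $\tilde C$ is the true class group $\Cl(K)$. The main obstacle is the index bookkeeping in the second part: one must chase through the Snake lemma applied to the exact sequence above, using the behaviour of regulators under finite-index inclusions, to show both that the multiplier is genuinely a positive rational with the claimed form and that its only value in the interval $[1/2,\,2]$ is~$1$.
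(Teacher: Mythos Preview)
Your overall plan --- relate the tentative $\tilde{HR}_K$ to $h_KR_K$ via the $S$-unit exact sequence and then argue that the only value of the ratio compatible with the step~12 check is~$1$ --- is the right shape, but two of the load-bearing claims are not correct as stated, and they are exactly where the paper's (much shorter) argument differs from yours.

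\textbf{What happens when $T$ is too small.} You write that in this case ``the algorithm fails to find a consistent root and step~12 enlarges~$T$''. That is not what the algorithm does: steps~9--11 work purely with valuation vectors and discrete logarithms, so if the local tests at $T$ wrongly declare a product to be a $d$-th power, the algorithm happily proceeds with the corresponding row of $R$ and with $(1/d)\log|\cdot|$ in the regulator lattice. Nothing ``fails''; the error is only visible in $\tilde{HR}_K$. The paper's proof faces this head-on: it observes that a spurious $d$-th power can only \emph{enlarge} the lattice one is implicitly computing with, so the resulting $\tilde{HR}_K$ is a \emph{divisor} of what it would have been with a correct $T$. Likewise, if $S$ fails to generate $\Cl(K)$ one only sees the quotient of $\Cl(K)$ generated by $S$, again a divisor. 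Both sources of error therefore push $\tilde{HR}_K$ in the \emph{same} direction and cannot cancel; that single sentence is essentially the whole proof in the paper.

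\textbf{The ``quadratic'' claim.} Your Part~3 asserts that the multiplier is ``either exactly~$1$ or at least~$4$'' because ``the index contributions enter quadratically''. They do not. With $V\subset\O_{K,S}^\times$ of finite index containing $\mu_K$, the standard computation from your own exact sequence gives
\[
\tilde{HR}_K \;=\; h_KR_K\cdot\frac{[\O_{K,S}^\times:V]}{|\Cl_S(K)|},
\]
which is \emph{linear} in each index; there is no square. In particular a non-trivial index~$2$ or $|\Cl_S(K)|=2$ produces a factor~$2$, not~$4$, so your slack-$2$ check would not separate it from~$1$ by your argument. What actually makes the check work is integrality, not size: in the situations the algorithm can reach, $\tilde{HR}_K$ is an integer \emph{divisor} of $h_KR_K$ (this is the paper's ``crucial observation''), so the ratio lies in $\{1,\tfrac12,\tfrac13,\dots\}$ and the approximation tolerance is calibrated against the gap between $1$ and $\tfrac12$, not between $1$ and $\tfrac14$.

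In short, replace your Part~1 treatment of small $T$ and the unjustified factor-$4$ claim in Part~3 by the one-directional divisibility argument: both an insufficient $T$ and an insufficient $S$ force $\tilde{HR}_K\mid h_KR_K$, hence equality as soon as the comparison in step~12 succeeds. That is the paper's proof, and it bypasses the Snake-lemma bookkeeping you flag as the main obstacle.
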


\begin{proof}
By the remark \ref{req:S_units}, we know the algorithm finds indeed all the $S$-units in $K$. Then, if the verification of the $HR$ product is correct, it means the $S$-units are enough to generate the class group.
The crucial observation is that the approximation errors due to the choice of $T$ and $S$ cannot compensate. If $T$ is not large enough and the algorithm incorrectly assumes an element to be a $d$-th power, then $\Tilde{HR_K}$ a divisor of its expected value. The same will happen if $S$ is not large enough to generate the class group.
\end{proof}

\begin{req}
Suppose we have a number field $K = \tilde{K}^H$ and a family $(K_i = \tilde{K}^{J_i})$ of number fields, such that $K$ admits a generalised norm relation with respect to $K_1, \dots, K_\ell$. If we want to compute the class group of $K$ using algorithm~\ref{algo:faster}, we could expect the most expensive step to be the computation of the $S_j$ units in all the $K_j$, since it is the only step whose computation is not polynomial in the size of the input.
However, in practice, when we try to apply this method to reasonable size examples, the most expensive step is often the computation of the images of the ideals in the $S_j$ by the compositums. 
\end{req}

\section{Comparison with classical norm relations} \label{sec:compar}

In this section, we will discuss the relevance of studying generalised norm relation instead of classical norm relation. A generalised norm relation of a group $G$ with respect to $H<G$ and a set of subgroup~$\J$ can come directly from a classical norm relation in $G$ (see fact \ref{req:subgroups}) or in a quotient of $G$ (see proposition \ref{pro:quotient}). But we will see that it is not always the case, and that in some examples, the methods in section \ref{sec:Algo} indeed allows to compute the class groups more efficiently than classical norm relations.

\begin{ft} \label{req:subgroups}
If there is a classical relation $1 = \sum_{i = 1}^l a_i N_{J_i} b_i$ for some finite group $G$ and some set~$\J$ of subgroups of $G$, then for any subgroup $H$, we can construct a generalised norm relation with respect to $H$ and $\J$, simply by multiplying both sides of the classical relation by $N_H$.
\end{ft}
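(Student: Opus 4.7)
The plan is essentially to take the classical norm relation as given and turn it into a generalised one by a single algebraic manipulation. Starting from the hypothesis, we have an equality
\[ 1 = \sum_{i=1}^{\ell} a_i N_{J_i} b_i \]
in $R[G]$, with $a_i, b_i \in R[G]$, $J_i \in \J$, and each $J_i$ non-trivial. The goal is to produce an identity of the shape required by Definition~\ref{def:GNR}, namely $N_H = \sum_i a_i' N_{J_i} b_i'$.

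The first step is to right-multiply both sides of the classical relation by $N_H \in \Z[G] \subseteq R[G]$. Since right multiplication by $N_H$ is an $R$-linear map on $R[G]$, distributivity yields
\[ N_H = \sum_{i=1}^{\ell} a_i N_{J_i} (b_i N_H). \]
Setting $b_i' := b_i N_H \in R[G]$ and $a_i' := a_i \in R[G]$, the right-hand side is visibly of the required form.

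The second step is to verify the remaining conditions in Definition~\ref{def:GNR}. The subgroups $J_i$ appearing are exactly those of the original classical relation, hence they lie in $\J$ and are non-trivial by hypothesis, so no extra verification is needed there. This concludes the construction. There is no genuine obstacle here: the proof is a one-line manipulation, and its content is simply the observation that the two-sided ideal generated by $\{N_J : J \in \J\}$ in $R[G]$ contains $1$ (by the classical relation) and is therefore the whole ring, hence in particular contains $N_H$. This is also consistent with the equivalence \ref{item:twosidedideal} $\Leftrightarrow$ \ref{item:existnormrel} of Proposition~\ref{pro:equiv_def}.
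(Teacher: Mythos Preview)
Your proof is correct and follows exactly the approach indicated in the paper: the statement itself already contains its proof (``simply by multiplying both sides of the classical relation by $N_H$''), and you have spelled out that one-line manipulation carefully, together with the verification that the side conditions of Definition~\ref{def:GNR} are inherited from the classical relation. Your closing remark linking this to the two-sided ideal characterisation in Proposition~\ref{pro:equiv_def} is a nice touch but not needed.
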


\begin{pro} \label{pro:quotient}
Let $G$ be a finite group, $H, J_1, \cdots, J_l$ subgroups of~$G$. Let $N$ be a normal subgroup of $G$ contained in $H$. Denote by $\pi$ the projection from $G$ to $G/N$. Then $G$ admits a generalised norm relation with respect to $H$ and $J_1, \cdots, J_\ell$ if and only if $G/N$ admits a generalised norm relation with respect to $\pi (H)$ and $\pi(J_1), \cdots, \pi(J_\ell)$.
\end{pro}

\begin{proof}

Suppose $G$ admits a generalised norm relation over  $\Q$ with respect to $H$ and $J_1, \cdots, J_\ell$, of the form $N_H = \sum_{i = 1}^l a_i N_{J_i} b_i$.

Let $\Pi \colon \Q[G] \rightarrow \Q[G/N], \sum_i \lambda_i g_i \mapsto \sum_i \lambda_i \pi(g_i)$. Then $\Pi$ is a surjective morphism of $\Q[G]$-modules. And we have $\Pi (N_H) = |N| N_{H/N}$, and $\Pi (N_{Ji}) = |N \cap J_i | N_{J_i / (N \cap J_i) }$. Then, if we compose the relation by $\Pi$, we get a generalised norm relation of $G/N$  with respect to $\pi (H)$ and $\pi(J_1), \cdots, \pi(J_\ell)$. 

Now suppose $G/N$ admits a generalised norm relation with respect to $\pi (H)$ and $\pi(J_1), \cdots, \pi(J_\ell)$. So there is a surjective morphism $\phi\colon \bigoplus_{i=1}^l \Q[ \pi(G) / \pi(J_i)] \rightarrow \Q[ \pi(G) / \pi (H) ]$.

So $\phi \circ \Pi$ is a surjective morphism from $\bigoplus_{i=1}^l \Q[ G / J_i]$ to $\Q[\pi(G)/\pi(H)]$. And since $N \subset H \subset G$, we have $\pi(G) / \pi (H) \simeq G/H$. Hence a surjective morphism from $\bigoplus_{i=1}^l \Q[ G / J_i]$ to $\Q[G/H]$.

\end{proof}

It is important to note however that some generalised norm relation do not come from a regular norm relation in a subgroup or in a quotient.

\begin{ex} \label{ex: quotient/subgroup}

    
    For example, the symmetric group $S_4$ admits a norm relation over~$\Q$ with respect to $H = C_2 \times C_2$, and $\J = \{ D_8, S_3 \}$.
    This generalised norm relation does not come from a regular norm relation because we can check that $S_4$ does not have a norm relation with respect to $\J$. It does not come from a quotient either because the largest normal subgroup of $S_4$ contained in $H$ is trivial.
\end{ex}

Classical norm relations can be useful to compute class groups of number fields even when they are not Galois extensions of $\Q$. 

Indeed, let $K$ be a non Galois extension of $\Q$. Denote by $\Tilde{K}$ its Galois closure, $G$ its Galois group and $H < G$ such that $K = \Tilde{K}^H$. Suppose there is a subfield $L$ of $\Tilde{K}$ and a subfield $M$ of $L$ such that $L/M$ is a Galois extension of Galois group $\Gamma$. Suppose also that there exists a classical norm relation in $\Q[\Gamma]$ involving some subgroups $\Delta_i$, as in the figure below.

    \begin{center}
    \begin{tikzpicture}[->,>=stealth',shorten >=2pt,auto, semithick]
        \node (A) {$\Tilde{K}$};
        \node (B) [below of = A, yshift = -2 cm] {$K$};
        \node (C) [below of = B, yshift = -5 cm] {$\Q$};
        \node (D) [below of = A, yshift = -3cm, xshift = 3cm] {$L$};
        \node (E) [below of = D,yshift = -2.5 cm] {$L_i$};
        \node (F) [below of = D,yshift = -1cm, xshift = 2cm] {$L^{\Delta_i}$};
        \path (A) edge [-] node [left] {$ H $} (B);
        \path (B) edge [-]  (C);
        \path (A) edge [-] node [left] {} (D);
        \path (D) edge [-] node [left] {$\Gamma$} (E);
        \path (D) edge [-] node [above right] {$\Delta_i$} (F);
        \path (F) edge [-] (E);
        \path (E) edge [-]  (C);
    \end{tikzpicture}
    \end{center} 

In this case, if we write the norm relation in $\Q[\Gamma]$ as $1 = \sum_i a_i N_{\Delta_i} b_i$, then we have a generalised norm relation $N_H = \sum_i a_i N_{\Delta_i} (b_i \cdot N_H)$ in $\Q[G]$ that we can use to compute the class group of $K$.

This is equivalent to saying that a subquotient of $G$ admits a classical norm relation, as in proposition \ref{pro:quotient}. In the particular case where $L = \Tilde{K}$, then the generalised norm relation comes from a classical norm relation in a subgroup of $G$, as in remark \ref{req:subgroups}.

We saw already with example \ref{ex: quotient/subgroup} that not all generalised norm relation of a group $G$ comes from a subgroup or a quotient of $G$.  The following algorithm is useful to find examples where generalised norm relation allow to compute class groups more efficiently than classical norm relations in any subgroups or quotients.

\begin{algo} \label{alg:comparaison}

\underline{input:} A group $G$, that is the Galois group of $\Tilde{K}/\Q$

\underline{output:} For all subgroup $H$ of $G$, the maximum of the degree of the subfields of $\Tilde{K}$ one needs to compute the class group of, in order to compute the class group of $\Tilde{K}^H$ using the best classical norm relations in quotients of $G$.

\begin{itemize}
    \item $L_J \leftarrow$ All subgroups of $G$ up to conjugation 
    \item $M \leftarrow$ List of the $\frac{|G|}{|J|}$ for all $J$ in $L_J$. \emph{The entries of $M$ represent the degrees of the $\Tilde{K}^J$. The goal will be to explore all classical norm relations in all quotients of $G$ and update the entries of $M$ to represent the maximum degree of the fields one has to study in order to compute the class group of $\Tilde{K}^J$.}
    \item $M_2 \leftarrow$ An empty list
    \item WHILE $M_2 \neq M$
    \begin{itemize}
        \item $M_2 \leftarrow M$
        \item FOR $i$ from $1$ to $\# L_J$
        \begin{itemize}
            \item $H \leftarrow L_J[i]$
            \item FOR $j$ from $i+1$ to $\# L_J$
            \begin{itemize}
                \item $J \leftarrow L_J[j]$
                \item Check if $H$ is conjugate to a normal subgroup of $J$. If not, go directly to the next $J$.
                \item Look for a classical norm relation in $J/H$ that minimizes the entries of $M$ corresponding to the subgroups involved.
                \item If such a relation is found, update the entries of $M$ accordingly. The entry corresponding to $\Tilde{K}^H$ but also those corresponding to its subfields or all the fields isomorphic to those.
            \end{itemize}
        \end{itemize}
    \end{itemize}
\end{itemize}

\end{algo}

\begin{ex}
Let $G = C_3 \times PSL_3(2)$, and $H = S_3 < G$, suppose we have $\Tilde{K}$ a Galois extension of $\Q$ of Galois group $G$. Then $K = \Tilde{K}^H$ is a field of degree $84$. To compute the class group of $K$, we can verify that there are no classical norm relations in any quotients or subgroups of~$G$ that allows us to recursively reduce the problem to fields of degree less than $84$. However, there exists a generalised norm relation that allows us to reduce the problem to four fields of respective degree $24$, $21$, $8$ and~$3$.

\end{ex}

\begin{req}
    We can do a systematic research by enumerating all groups $G$ up to isomorphism, all subgroups $H <G$, and check every time if there is a generalised norm relation that is more efficient than any classical norm relation in any quotient or subgroups.
    For $|G|< 250$, we find 101 such examples of pairs $(G, H)$.
\end{req}

\begin{req}
    As explained in \cite[theorem 2.11]{norm}, the groups that do not admit classical norm relations are the ones with with a fixed point free unitary representation. We could not find any generalised norm relations in these groups either, except the ones coming from classical norm relations in quotients. We do not know if this is true in general or if counterexamples are simply larger.
\end{req}

In the rest of this section, we will see that if we have an example of a useful generalised norm relation for a finite group $G$, we can build infinitely many other examples, simply by taking the same relation in $C_p \times G$, for any prime $p$ that does not divide $|G|$.

\begin{df}{}
Let $G$ be a group that admits a generalised norm relation with respect to $H < G$ and a set a subgroups $\J = \{ J_1 \cdots J_\ell \}$. we say that the relation is \emph{optimal} if it is the relation that maximizes the quotient $\frac{|J_i|}{|H|}$, where $J_i$ is the smallest group in $\J$.
\end{df}

\begin{req}
With the notations of the previous definition, if $\Tilde{K}/\Q$ is a Galois extension of Galois group $G$, then the quotient $\frac{|J_i|}{|H|}$ is the quotient of the degree of $\Tilde{K}^H$ by the degree of $\Tilde{K}^{J_i}$.
\end{req}

\begin{pro}
Let $G$ be a group that admits a generalised norm relation with respect to $H < G$ and a set a subgroups $\J = \{ J_1 \cdots J_\ell \}$. Suppose this generalised norm relation is optimal. Let $p$ be a prime number that does not divide $|G|$. Then $C_p \times G$ admits an optimal generalised norm relation with respect to $1 \times H$ and $\J_2 = \{ 1 \times J_1, \cdots, 1 \times J_\ell \}$.
\end{pro}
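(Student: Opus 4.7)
I would use the simple-module characterization of generalised norm relations provided by condition~\ref{item:Qbarfixedpts} of Proposition~\ref{pro:equiv_def}: a family $\mathcal{J}'$ supports a relation for $H'$ iff every simple $\overline{\Q}[\cdot]$-module $V$ with $V^{H'} \ne 0$ admits some $J' \in \mathcal{J}'$ with $V^{J'} \ne 0$. Two structural facts about $C_p \times G$ make this characterization easy to handle. First, since $\gcd(p,|G|)=1$, Goursat's lemma forces every subgroup of $C_p \times G$ to be a direct product $C \times J$ with $C \le C_p$ and $J \le G$. Second, $\overline{\Q}[C_p \times G] \cong \overline{\Q}[C_p] \otimes_{\overline{\Q}} \overline{\Q}[G]$, so the simple modules over this algebra are exactly tensor products $W \otimes V$ with $W$ a simple $\overline{\Q}[C_p]$-module and $V$ a simple $\overline{\Q}[G]$-module; moreover $(W \otimes V)^{C \times J} = W^{C} \otimes V^{J}$.

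\textbf{Existence.} Let $W \otimes V$ be a simple $\overline{\Q}[C_p \times G]$-module with $(W \otimes V)^{1 \times H} \ne 0$. Then $V^H \ne 0$, so the hypothesis on $G$ gives $J_i \in \mathcal{J}$ with $V^{J_i} \ne 0$, whence $(W \otimes V)^{1 \times J_i} = W \otimes V^{J_i} \ne 0$. This shows $\mathcal{J}_2$ defines a valid generalised norm relation of $C_p \times G$ with respect to $1 \times H$.

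\textbf{Optimality.} Let $\mathcal{K}$ be any family of non-trivial subgroups of $C_p \times G$ supporting a valid relation for $1 \times H$, and write each $K \in \mathcal{K}$ as $K = C_K \times J_K$. Pick a non-trivial simple $\overline{\Q}[C_p]$-module $W$ (which exists because $p$ is prime); then $W^{C_p} = 0$, so $W^{C_K} \ne 0$ forces $C_K = 1$. For any simple $V$ with $V^H \ne 0$ the relation applied to $W \otimes V$ yields some $K \in \mathcal{K}$ with $W^{C_K} \otimes V^{J_K} \ne 0$, forcing both $C_K = 1$ and $V^{J_K} \ne 0$. Consequently the sub-family $\mathcal{K}_0 := \{J : 1 \times J \in \mathcal{K}\}$ consists of non-trivial subgroups of $G$ and is valid with respect to $H$. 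By optimality of the original relation its smallest element $J$ satisfies $|J| \le |J_{i_0}|$, where $J_{i_0}$ is a smallest element of $\mathcal{J}$. Since $1 \times J \in \mathcal{K}$, the smallest element of $\mathcal{K}$ has order at most $|J_{i_0}| = |1 \times J_{i_0}|$, so $\mathcal{J}_2$ attains the optimal quotient.

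\textbf{Main obstacle.} Existence is almost immediate once the tensor product decomposition of simple modules is in place, so the real content is optimality: an arbitrary competing family $\mathcal{K}$ may contain subgroups with non-trivial $C_p$-component, and these could a priori be smaller than anything available in $\mathcal{J}_2$, giving a better quotient. The key trick is probing the relation with a non-trivial $\overline{\Q}[C_p]$-module to eliminate exactly such subgroups and reduce back to the original optimisation on $G$; this uses the coprimality $\gcd(p,|G|)=1$ twice, both to split subgroups as direct products and to decompose simple modules as tensor products.
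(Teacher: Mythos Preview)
Your proof is correct and follows essentially the same approach as the paper: both use the simple-module characterisation from Proposition~\ref{pro:equiv_def}, the fact that every subgroup of $C_p\times G$ is a direct product $C\times J$ with $C\le C_p$ and $J\le G$ (the paper states this as an internal lemma, you cite Goursat), and the tensor decomposition of simple modules together with $(W\otimes V)^{C\times J}=W^C\otimes V^J$.

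The one notable difference is in the optimality step. The paper simply projects a hypothetical better family $\{\tilde J_i'\}$ onto $G$ and asserts (``using the same method as before, it is easy to check'') that the projected family is valid and still better, leading to a contradiction. Your argument is more explicit and in fact cleaner: by probing with a non-trivial $\overline{\Q}[C_p]$-module $W$ you show directly that the members of $\mathcal K$ that actually do the work must have trivial $C_p$-component, so the sub-family $\mathcal K_0=\{J:1\times J\in\mathcal K\}$ is already valid for $H$ in $G$ and consists of non-trivial subgroups. This makes transparent why projection does not shrink the relevant subgroup sizes, a point the paper leaves implicit.
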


\begin{proof}

Let $G' = C_p \times G$. Let $\rho '$ be an irreducible representation of $G'$. Then $\rho' = \xi \otimes \rho$, whis $\chi$ a character of $C_p$ and $\rho$ an irreducible representation of $G$. 

\begin{lm}
For all subgroup $K'$ of $G'$, either $K'$ is of the form $1 \times K$ with $K < G$, or it is of the form $C_p \times K$ with $K < G$.
\end{lm}

\begin{proof}

Suppose $K'$ contains an element $i \times g \in G' = C_p \times G$ with $i \neq 1$. Let $n$ be the order of $g$ in $G$. Then, since $\gcd(n,p) = 1$,  the subgroup $K'$ contains all the $(k n) i \times 1_G$ with $k$ in $\N$. So $C_p \times 1_G$ is contained in $G'$. So it is easy to check that the projection of $K'$ on $G$ is indeed a subgroup of $G$.

\end{proof}

Let $K$ a subgroup of $G$. Then $(\rho')^{1 \times K} = \rho ^K$ and $(\rho')^{C_p \times K } = \chi ^ {C_p} \otimes \rho ^K$. So $(\rho')^{C_p \times K } \neq 0$ if and only if $\chi$ is trivial and $\rho^K \neq 0$.

Since $G$ admits a generalised norm relation with respect to $H$ and $\J$, then for every irreducible representation $\rho$ of $G$, if $\rho ^H \neq 0$, there exists $J \in \J$ such that $\rho^J \neq 0$. Let $\rho' = \chi \otimes \rho$ be an irreducible representation of $G'$. Then it is easy to check that if $(\rho')^{1 \times H} \neq 0$, there exists  $J \in \J$ such that $(\rho')^{1 \times J} \neq 0$. So $G'$ admits a generalised norm relation with respect to $1 \times H$ and $ \{ 1 \times J_1, \cdots, 1 \times J_\ell \}$.

Now let us prove that this relation is optimal. Suppose $G$ has a better generalised norm relation with respect to $\Tilde{H}' < G'$ and $\{ \Tilde{J_1}',\dots, \Tilde{J_m}' \}$. Let $\Tilde{H}, \Tilde{J_1}, \cdots, \Tilde{J_m} < G$ the projections of $\Tilde{H}'$ and of the $\Tilde{J_i}'$ onto $G$. Then, using the same method as before, it is easy to check that $G$ admits a generalised norm relation with respect to $\Tilde{H}$ and the $\Tilde{J_i}$, and that this norm relation is better than the first one, which is a contradiction.

\end{proof}

\section{Examples} \label{sec:Ex}

\begin{ex}
The group $G = S_5$ admits a generalised norm relation with respect to $H = S_3 < G$ and $\J = \{ A_4, D_{12}, C_5 : C_4 \}$. We can check that this relation does not come from a classical norm relation quotient. There are non conjugate copies of $S_3$ in $S_5$. For $H$ we have to take the one with no fixed points.

If we choose a Galois extension $\Tilde{K}/\Q$ of Galois group $G$, then $K = \Tilde{K}^H$ is of degree $20$, and we can compute its class group inductively, by reducing the problem to three fields of respective degree $10$, $10$ and~$6$.

By choosing $\Tilde{K}$ such that $K$ has a big discriminant, we can obtain examples where the recursive method is more efficient to compute the class group of $K$ than the preexisting methods. For example, consider the polynomial $p(x) = x^5 + 91x^4 + 7x^3 - 11x^2 - x + 1$ and define $\Tilde{K}$ to be the splitting field of $p(x)$. Then $\Tilde{K}$ has Galois group $S_5$, and $K = \Tilde{K}^{S_3}$ is a number field of degree $20$ and of discriminant $2^{28}\cdot 383^{10}\cdot 4723^{10}\cdot 23831^{10} \simeq 6 \cdot 10^{114}$.
On Pari/GP \cite{PARI2}, the function to compute $\Cl(K)$ was not able to finish in three days, whereas with the method of generalised norm relations, implemented also in Pari/GP, we obtained the result in less than nine hours (CPU time). The result is $\Cl(K) = C_4 \times C_2^4$.

\end{ex}

\begin{ex} \label{ex:C7A5}
The group $G = A_5$ admits a a generalised norm relation with respect to $H = C_2 \times C_2 < G$ and $\J = \{ A_4, D_{10} \}$. We can check that this relation does not come from a classical norm relation quotient.

If we choose a Galois extension $\Tilde{K}/\Q$ of Galois group $G$, then $K = \Tilde{K}^H$ is of degree $15$, and we can compute its class group inductively, by reducing the problem to two fields of respective degree $6$ and $5$. However, the method with classical norm relations also applies here, but with that method, the largest field we would need to consider is of degree $12$.

To create a bigger example, since $7 \nmid |A_5|$, we can consider the generalised norm relation of $G' = C_7 \times A_5$ with respect to $H = C_2 \times C_2 < G$ and  $\J = \{ A_4,  D_{10} \}$. That way, we can compute the class group of a field of degree $105$ by reducing the problem to two fields of respective degree $42$ and $35$, whereas with classical norm relations, we would have reduced the problem to a field of degree $84$.

For example, consider the polynomial $f(x) = x^6 - 2x^5 + 3x^4 - 4x^3 + 2x^2 - 2x - 1$. Define $\Tilde{L}$ to be the splitting field of $f(x)$. Then $\Tilde{L}$ has Galois group $A_5$. The splitting field $\Tilde{M}$ of the polynomial $g(x) = x^4 + x^3 + 4x^2 + 20x + 23$ has Galois group $C_7$. Up to isomorphism, there is only one compositum $\Tilde{K}$ of $\Tilde{L}$ and $\Tilde{M}$. What's more, $\Tilde{K}/\Q$ is Galois and its Galois group is $G = C_7 \times A_5$. Denote by $K$ the subfield of $\Tilde{K}$ fixed by $H = C_2 \times C_2$, which is a field of degree $105$ and of discriminant $2^{126} \cdot  29^{90} \cdot  67^{42} \simeq 1.7 \cdot 10^{246}$. 
With the method involving only classical norm relation, we can compute $\Cl(K)$, but we have to compute the class group of some subfields, the largest of which is $F = \Tilde{K}^{C_5}$, of degree $84$ and of discriminant $2^{126} \cdot  29^{72} \cdot  67^{42} \simeq 8 \cdot 10^{219}$. On Pari/GP, the function to compute $\Cl(F)$ was not able to finish in over $5$ months, whereas with our implementation of the method of generalised norm relations, we computed $\Cl(K)$ in about $5$ days (CPU time). The result is $\Cl(K) = 1$.

\end{ex}



\printbibliography[title = References]

\newpage

\end{document}